\documentclass[11pt]{amsart}
\usepackage{amsmath}
\usepackage{amssymb}
\usepackage{amsfonts}
\usepackage{mathrsfs}
\usepackage{epsfig}
\usepackage{color}

\DeclareMathAlphabet{\mathpzc}{OT1}{pzc}{m}{it}

\numberwithin{equation}{section}       

\theoremstyle{plain}

\newtheorem*{theo }{Theorem}
\newtheorem{prop}{Proposition}[section]

\newtheorem{coro}[prop]{Corollary}
\newtheorem{lemm}[prop]{Lemma}
\newtheorem{theoalph}{Theorem}

\theoremstyle{definition}
\newtheorem{defi}[prop]{Definition}

\theoremstyle{remark}
\newtheorem{rema}[prop]{Remark}

\newtheoremstyle{citing}
  {3pt}
  {3pt}
  {\itshape}
  {}
  {\bfseries}
  {.}
  {.5em}
  {\thmnote{#3}}

\theoremstyle{citing}
\newtheorem*{generic}{}

\newcommand{\partn}[1]{{\smallskip \noindent \textbf{#1.}}}

%
%

\newcommand{\C}{\mathbb{C}}

\newcommand{\N}{\mathbb{N}}

\newcommand{\R}{\mathbb{R}}

\newcommand{\cO}{\mathcal{O}}

\newcommand{\cR}{\mathcal{R}}
\newcommand{\cS}{\mathcal{S}}

\newcommand{\fD}{\mathfrak{D}}

\newcommand{\fL}{\mathfrak{L}}

\newcommand{\sC}{\mathscr{C}}

\newcommand{\sE}{\mathscr{E}}

\newcommand{\sM}{\mathscr{M}}
\newcommand{\sN}{\mathscr{N}}

\newcommand{\sR}{\mathscr{R}}

\newcommand{\sW}{\mathscr{W}}

%
%

\newcommand{\hQ}{\widehat{Q}}

\newcommand{\hV}{\widehat{V}}
\newcommand{\hW}{\widehat{W}}

\newcommand{\hmu}{\widehat{\mu}}

\newcommand{\hrho}{\widehat{\rho}}

\newcommand{\tJ}{\widetilde{J}}

\newcommand{\tV}{\widetilde{V}}

\newcommand{\tgamma}{\widetilde{\gamma}}

\newcommand{\teta}{\widetilde{\teta}}

\newcommand{\trho}{\widetilde{\rho}}

\newcommand{\tsigma}{\widetilde{\tsigma}}
\newcommand{\tvarsigma}{\widetilde{\tvarsigma}}

%
%

\newcommand{\ov}{\overline}

\renewcommand{\=}{ : = }

\DeclareMathOperator{\diam}{diam}

\DeclareMathOperator{\dist}{dist}

\DeclareMathOperator{\HD}{HD}
\DeclareMathOperator{\Crit}{Crit}
\DeclareMathOperator{\CV}{\operatorname{CV}}

\newcommand{\CC}{\overline{\C}}
\newcommand{\RR}{\overline{\R}}

\newcommand{\map}{f} 
\newcommand{\CJ}{\Crit(\map) \cap J(\map)} 
\newcommand{\CVJ}{\CV(\map) \cap J(\map)} 
\newcommand{\HDhyp}{\HD_{\operatorname{hyp}}}

%
%

\DeclareMathOperator{\hyp}{hyp}

\DeclareMathOperator{\con}{con}
\newcommand{\Jcon}{J_{\con}}


\newcommand{\TCE}{Topological Collet-Eckmann Condition}
\newcommand{\TCEC}{TCE condition}

\newcommand{\tpos}{t_+} 
\newcommand{\tneg}{t_-} 
\newcommand{\chiinf}{\chi_{\inf}}
\newcommand{\chisup}{\chi_{\sup}} 
\newcommand{\infd}{\chiinf^*}
\newcommand{\supd}{\chisup^*}
\newcommand{\pressure}{\mathscr{P}}

\newcommand{\badp}{Y}
\newcommand{\uW}{\underline{W}}
\newcommand{\mconst}{m} 

\newcommand{\puno}{(i)}
\newcommand{\pdos}{(ii)}

\begin{document}

\title[Thermodynamics of rational maps]{Nice inducing schemes and the \\ thermodynamics of rational maps}
\author[F. Przytycki]{Feliks Przytycki$^\dag$}
\author[J. Rivera-Letelier]{Juan Rivera-Letelier$^\ddag$}
\thanks{$\dag$ Partially supported by Polish MNiSW Grant NN201 0222 33 and the EU FP6 Marie Curie ToK and RTN programmes SPADE2 and CODY}
\thanks{$\ddag$ Partially supported by Research Network on Low Dimensional Dynamical Systems, PBCT/CONICYT, Chile, Swiss National Science Foundation Projects No. 200021-107588 and 200020-109175, and IMPAN}
\address{$\dag$ Feliks Przytycki, Institute of Mathematics, Polish Academy of Sciences, ul. \'Sniadeckich 8, 00956 Warszawa, Poland.}
\email{feliksp@impan.gov.pl}
\address{$\ddag$ Juan Rivera-Letelier, Facultad de Matem{\'a}ticas, Campus San Joaqu{\'\i}n, P. Universidad Cat{\'o}lica de Chile, Avenida Vicu{\~n}a Mackenna~4860, Santiago, Chile}
\email{riveraletelier@mat.puc.cl}
\begin{abstract}
We study the thermodynamic formalism of a complex rational map~$f$ of degree at least two, viewed as a dynamical system acting on the Riemann sphere.
More precisely, for a real parameter~$t$ we study the existence of equilibrium states of~$f$ for the potential $-t \ln |f'|$, and the analytic dependence on~$t$ of the corresponding pressure function.
We give a fairly complete description of the thermodynamic formalism for a large class of rational maps, including well known classes of non\nobreakdash-uniformly hyperbolic rational maps, such as (topological) Collet-Eckmann maps, and much beyond.
In fact, our results apply to all non\nobreakdash-renormalizable polynomials without indifferent periodic points, to infinitely renormalizable quadratic polynomials with \emph{a priori} bounds, and all quadratic polynomials with real coefficients.
As an application, for these maps we describe the dimension spectrum for Lyapunov exponents, and for pointwise dimensions of the measure of maximal entropy, and obtain some level-1 large deviations results.
For polynomials as above, we conclude that the integral means spectrum of the basin of attraction of infinity is real analytic at each parameter in~$\R$, with at most two exceptions.
\end{abstract}

\maketitle
\setcounter{tocdepth}{1}
\tableofcontents

\section{Introduction}
The purpose of this paper is to study the thermodynamic formalism of a complex rational map~$f$ of degree at least two, viewed as a dynamical system acting on the Riemann sphere $\overline{\mathbb{C}}$.
More precisely, for a real parameter~$t$ we study the existence of equilibrium states of~$f$ for the potential $-t \ln |f'|$ and the (real) analytic dependence on~$t$ of the corresponding pressure function.
Our particular choice of potentials is motivated by the close connection between the corresponding pressure function and various multifractal spectra.
In fact, we give applications of our results to rigidity, multifractal analysis of dimension spectrum for Lyapunov exponents and for pointwise dimensions, as well as level\nobreakdash-1 large deviations.
See~\cite{BakSta96,BelSmi05,Ere91} for other applications of the thermodynamic formalism of rational maps to complex analysis.

For $t < 0$ and for an arbitrary rational map~$f$, a complete description of the thermodynamic formalism was given by Makarov and Smirnov in~\cite{MakSmi00}.
They showed that the corresponding transfer operator is quasi-compact on a suitable Sobolev space, see also~\cite{Rue92}.
For~$t = 0$ and a general rational map~$f$, there is a unique equilibrium state of~$f$ for the constant potential equal to~$0$~\cite{Lju83,FreLopMan83}.
To the best of our knowledge it is not known if for a general rational map~$f$ the pressure function is real analytic on a neighborhood of~$t = 0$.
For $t > 0$ the only results on the analyticity of the pressure function that we are aware of, are for generalized polynomial-like maps without recurrent critical points in the Julia set.
For such a map the analyticity properties of the pressure function were studied in~\cite{MakSmi03,StrUrb03}, using a Markov tower extension and an inducing scheme, respectively.

Under very weak hypotheses on a rational map~$f$, we show that the pressure function is real analytic at each parameter~$t$ in~$\R$, with at most two exceptions.
In other words, the pressure function can have at most two phase transitions and thus at most three phases.
It turns out that the parameter $t = 0$ is always contained in one of the phases, which is characterized as the only phase where the measure theoretic entropy of an equilibrium state can be strictly positive.
We show that for every parameter in this phase there is a unique equilibrium state that has exponential decay of correlations and that satisfies the Central Limit Theorem.

Our results apply to well-known classes of non\nobreakdash-uniformly hyperbolic rational maps.
Furthermore our results apply to all non\nobreakdash-renormalizable polynomials without indifferent periodic points, to infinitely renormalizable quadratic polynomials with \emph{a priori} bounds, and to all quadratic polynomials with real coefficients.

The main ingredients in our approach are the distinct characterizations of the pressure function given in~\cite{PrzRivSmi04} and the inducing scheme introduced in~\cite{PrzRiv07}, which we develop here in a more general setting.
It is worth noticing that to study a rational map with a recurrent critical point in the Julia set, it is usually not enough to consider an induced map defined with the first return time.
The induced maps considered here are constructed with higher returns times, which makes the estimates more delicate.
As in~\cite{PrzRiv07}, our key estimates are based on controlling a discrete version of conformal mass.
However, the ``density'' introduced in~\cite{PrzRiv07} for this purpose does not work in the more general setting considered here.
We thus introduce a different technique, based on a Whitney type decomposition.

There have been several recent results on the thermodynamic formalism of multimodal interval maps with non-flat critical points, by Bruin and Todd~\cite{BruTod08,BruTod09a} and Pesin and Senti~\cite{PesSen08}.
Besides~\cite[Theorem~6]{BruTod08}, that gives a complete description of the thermodynamic formalism for~$t$ close to~0 and for a general topologically transitive multimodal interval map with non-flat critical points, all the results that we are aware of are restricted to non\nobreakdash-uniformly hyperbolic maps.
It is possible to apply the approach given here to obtain a fairly complete description of the thermodynamic formalism of a general topologically transitive multimodal interval map with non-flat critical points.
We obtain in particular that the pressure function of such a map is real analytic at each parameter in~$\mathbb{R}$, with at most two exceptions.\footnote{Recently Iommi and Todd~\cite{IomTod0907} have shown similar results for transitive multimodal maps with non-flat critical points as those presented here, but only obtaining that the pressure function is continuous differentiable, and without statistical properties of the equilibrium states.}
We are in the process of writing these results.

After reviewing some general properties of the pressure function in~\S\ref{ss:pressure and equilibria}, we state our main result in~\S\ref{ss:nice thermodynamics}.
The applications to rigidity, multifractal analysis, and level\nobreakdash-1 large deviations are given in Appendix~\ref{s:applications}.

Throughout the rest of this introduction we fix a rational map~$f$ of degree at least two, we denote by~$\Crit(f)$ the set of critical points of~$f$ and by~$J(f)$ the Julia set of~$f$.
\subsection{The pressure function and equilibrium states}\label{ss:pressure and equilibria}
We give here the definition of the pressure function and of equilibrium states, see~\S\ref{s:preliminaries} for references and precise formulations.

Let $\sM(f)$ be the space of all probability measures supported on~$J(f)$ that are invariant by~$f$.
We endow~$\sM(f)$ with the weak$^*$ topology.
For each $\mu \in \mathscr{M}(f)$, denote by~$h_\mu(f)$ the \emph{measure theoretic entropy of}~$\mu$, and by $\chi_\mu(f) \= \int \ln |f'| d \mu$ the \emph{Lyapunov exponent of}~$\mu$.
Given a real number~$t$ we define the \emph{pressure of $f|_{J(f)}$ for the potential $- t \ln |f'|$} by,
\begin{equation}\label{e:variational principle}
P(t) \=
\sup \left\{ h_\mu(f) - t \chi_\mu(f) \mid \mu \in \mathscr{M}(f) \right\}.
\end{equation}
For each $t \in \R$ we have $P(t) < + \infty$,\footnote{When $t \le 0$ the number $P(t)$ coincides with the topological pressure of~$f|_{J(f)}$ for the potential $- t \ln |f'|$, defined with $(n, \varepsilon)$\nobreakdash-separated sets.
However, these numbers do not coincide when $t > 0$ and there are critical points of~$f$ in~$J(f)$.
In fact, since $\ln |f'|$ takes the value~$- \infty$ at each critical point of~$f$, in this case the topological pressure of~$f|_{J(f)}$ for the potential $- t \ln |f'|$ is equal to~$+\infty$.}
and the function $P : \R \to \R$ so defined will be called \emph{the pressure function} of~$f$.
It is convex, non\nobreakdash-increasing and Lipschitz continuous.

An invariant probability measure~$\mu$ supported on the Julia set of~$f$ is called an \emph{equilibrium state of~$f$ for the potential $-t\ln|f'|$}, if the supremum~\eqref{e:variational principle} is attained for this measure.

The numbers,
$$
\chiinf(f) \= \inf \left\{ \chi_\mu(f) \mid \mu \in \mathscr{M}(f) \right\},
$$
$$
\chisup(f) \= \sup \left\{ \chi_\mu(f) \mid \mu \in \mathscr{M}(f) \right\},
$$
will be important in what follows.
We call
\begin{equation}\label{e:negative phase transition}
\tneg \= \inf \{ t \in \R \mid P(t) + t \chisup(f) > 0 \}
\end{equation}
\begin{equation}\label{e:positive phase transition}
\tpos \= \sup \{ t \in \R \mid P(t) + t \chiinf(f) > 0 \}
\end{equation}
the \emph{condensation point} and the \emph{freezing point} of~$f$, respectively.
We remark that the condensation (resp. freezing) point can take the value $-\infty$ (resp. $+ \infty$).
We have the following properties (Proposition~\ref{p:asymptotes}):
\begin{itemize}
\item
  $\tneg < 0 < \tpos$;
\item
for all $t \in \R \setminus (\tneg, \tpos)$ we have $P(t) = \max \{ - t \chisup(f), - t \chiinf(f) \} $;
\item
for all $t \in (\tneg, \tpos)$ we have $P(t) > \max \{ - t \chiinf(f), - t \chisup(f) \}$.
\end{itemize}

\subsection{Nice sets and the thermodynamics of rational maps}\label{ss:nice thermodynamics}
A neighborhood~$V$ of $\CJ$ is a \emph{nice set for}~$f$, if for every $n \ge 1$ we have $f^n(\partial V) \cap V = \emptyset$, and if each connected component of~$V$ is simply connected and contains precisely one critical point of~$f$ in~$J(f)$.
A \emph{nice couple for}~$f$ is a pair of nice sets~$(\hV, V)$ for~$f$ such that $\ov{V} \subset \hV$ and such that for every $n
\ge 1$ we have $f^n(\partial V) \cap \hV = \emptyset$.
We will say that a nice couple $(\hV, V)$ is \emph{small}, if there is a small $r > 0$ such that $\hV \subset B(\CJ, r)$.

We say that a rational map~$f$ is \emph{expanding away from critical points}, if for every neighborhood~$V'$ of~$\CJ$ the map~$f$ is uniformly expanding on the set
$$
\{ z \in J(f) \mid \text{ for every $n \ge 0$, $f^n(z) \not \in V'$} \}.
$$

\begin{generic}[Main Theorem]
Let~$f$ be a rational map of degree at least two that is expanding away from critical points, and that has arbitrarily small nice couples.
Then following properties hold.
\begin{description}
\item[Analyticity of the pressure function]
The pressure function of~$f$ is real analytic on $(\tneg, \tpos)$, and linear with slope~$- \chisup(f)$ (resp. $-\chiinf(f)$) on $(- \infty, \tneg]$ (resp. $[\tpos, + \infty)$).
\item[Equilibrium states]
For each $t_0 \in (\tneg, \tpos)$ there is a unique equilibrium state of~$f$ for the potential~$-t_0 \ln|f'|$.
Furthermore this measure is ergodic and mixing.
\end{description}
\end{generic}
We now list some classes of rational maps for which the Main Theorem applies.
\begin{itemize}
\item
Using~\cite{KozvSt09} we show that each \emph{at most finitely renormalizable polynomial without indifferent periodic orbits} satisfies the hypotheses of the Main Theorem, see Theorem~\ref{t:non-renormalizable} in~\S\ref{ss:non-renormalizable}.
\item
\emph{Quadratic polynomials with real coefficients} satisfy the hypothesis of the Main Theorem, with two exceptions: Maps with an indifferent periodic point, which are considerably simpler to treat, and maps having a renormalization conjugated to the Feigenbaum polynomial, for which we show that a slightly more general version of the Main Theorem applies (Theorem~\ref{t:nice thermodynamics} in~\S\ref{s:proof of nice thermodynamics}).
In particular our results imply that the conclusions of the Main Theorem hold for each quadratic polynomial with real coefficients, see~\S\ref{ss:real quadratic} for details.
\item
\emph{Topological Collet-Eckmann rational maps} have arbitrarily small nice couples~\cite[Theorem~E]{PrzRiv07} and are expanding away of critical points.
These maps include \emph{Collet-Eckmann rational maps}, as well as maps without recurrent critical points and without parabolic periodic points; see~\cite{PrzRoh98} and also~\cite[Main Theorem]{PrzRivSmi03}.
\item
Each \emph{backward contracting rational map} has arbitrarily small nice couples~\cite[Proposition~6.6]{Riv07}.
If in addition the Julia set is different from~$\CC$, such a map is also expanding away from critical points~\cite[Corollary~8.3]{Riv07}.
In~\cite[Theorem~A]{Riv07} it is shown that a rational map~$f$ of degree at least two satisfying the \emph{summability condition with exponent~1}:
\begin{quote}
$f$ does not have indifferent periodic points and for each critical value~$v$ in the Julia set of~$f$ we have
$$ \sum_{n = 1}^{+ \infty} |(f^n)'(v)|^{-1} < + \infty $$
\end{quote}
is backward contracting, and it thus has arbitrarily small nice couples.
In~\cite{Prz98} it is shown that each rational map satisfying the summability condition with exponent~1 is expanding away of critical points.
\end{itemize}

Using a stronger version of the Main Theorem (Theorem~\ref{t:nice thermodynamics} in~\S\ref{s:proof of nice thermodynamics}), we show that each infinitely renormalizable quadratic polynomial for which the diameters of the small Julia sets converge to~$0$ satisfies the conclusions of the Main Theorem, see~\S\ref{ss:infinitely renormalizable} in Appendix~\ref{s:nice puzzles}.
In particular the conclusions of the Main Theorem hold for each infinitely renormalizable polynomial with \emph{a priori} bounds; see~\cite{KahLyu08,McM94} and references therein for results on~\emph{a priori} bounds.
\begin{rema}
In the proof of the Main Theorem we construct the equilibrium states through an inducing scheme with an exponential tail estimate, that satisfies some additional technical properties; see~\S\ref{ss:equilibrium} for precise statements.
The results of~\cite{You99} imply that the equilibrium states in the Main Theorem are exponentially mixing and that the Central Limit Theorem holds for these measures.
It also follows that these equilibrium states have other statistical properties, such as the ``almost sure invariant principle'', see e.g.~\cite{Gou05,MelNic05,MelNic08,Tyr05}.
\end{rema}

We obtain as a direct consequence of the Main Theorem the following result on the integral means spectrum.
\begin{coro}
Let~$f$ be a monic polynomial with connected Julia set and degree~$d \ge 2$, that is expanding away from critical points and that has arbitrarily small nice couples.
Let
$$ \phi : \{ z \in \C \mid |z| > 1 \} \to \C \setminus J(f) $$
be a conformal representation that is tangent to the identity at infinity.
Then the integral means spectrum of~$\phi$,
$$ \beta_{\phi}(t)
\=
\limsup_{r \to 1^+}
\frac{ \ln \int_{0}^{2 \pi} | \phi'(r \exp(i \theta)) |^t d \theta}{| \ln (r - 1) |}, $$
is real analytic on~$(\tneg, \tpos)$ and linear with slope~$1 - \chisup(f) / \ln d$ (resp. $1 -\chiinf(f)/\ln d$) on $(- \infty, \tneg]$ (resp. $[\tpos, + \infty)$).
\end{coro}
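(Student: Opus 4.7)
The strategy is to reduce the corollary to the Main Theorem via the classical identity
\[
\beta_{\phi}(t) \;=\; (t-1) + \frac{P(t)}{\ln d}.
\]
Once this identity is established all three claims follow immediately: real analyticity on $(\tneg,\tpos)$ transfers from $P$; on $(-\infty,\tneg]$ the substitution $P(t)=-t\chisup(f)$ gives an affine function of slope $1-\chisup(f)/\ln d$; on $[\tpos,+\infty)$ the substitution $P(t)=-t\chiinf(f)$ gives slope $1-\chiinf(f)/\ln d$.

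To derive the identity I would use that $\phi$ is the B\"ottcher coordinate of~$f$, so $\phi(w^d)=f(\phi(w))$ for $|w|>1$. Iterating $n$ times and differentiating yields
\[
\phi'(w) \;=\; \frac{d^{n}\, w^{d^{n}-1}}{(f^n)'(\phi(w))}\,\phi'(w^{d^{n}}).
\]
Fix $R>1$ so large that $|\phi'|$ is comparable to $1$ on the circle $\{|w|=R\}$ (available since $\phi$ is tangent to the identity at infinity) and put $r_n \= R^{1/d^{n}}$. The degree-$d^n$ covering $\theta\mapsto d^n\theta\bmod 2\pi$, together with the B\"ottcher relation, identifies the $d^n$ points $\phi(r_n e^{i\theta})$ lying over a fixed $\psi$ with the preimages of $\phi(Re^{i\psi})$ under $f^n$; combining this with the displayed distortion formula gives
\[
\int_{0}^{2\pi} |\phi'(r_n e^{i\theta})|^t\, d\theta \;\asymp\; d^{n(t-1)} \int_{0}^{2\pi}\sum_{f^n(y)=\phi(Re^{i\psi})} |(f^n)'(y)|^{-t}\, d\psi.
\]
Since $r_n-1\sim(\ln R)/d^n$, we have $|\ln(r_n-1)|\sim n\ln d$, and the desired identity reduces to the preimage-sum characterization
\[
\frac{1}{n}\ln \int_{0}^{2\pi}\sum_{f^n(y)=\phi(Re^{i\psi})}|(f^n)'(y)|^{-t}\,d\psi \;\xrightarrow[n\to\infty]{}\; P(t),
\]
one of the equivalent descriptions of the pressure established in~\cite{PrzRivSmi04} under the hypotheses of the Main Theorem, used here in its uniform-in-base-point form.

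The main obstacle is twofold. First, $\beta_\phi$ is defined as a $\limsup$ over all $r\to 1^+$, not only along the discrete sequence $r=r_n$; this is handled by standard bounded-modulus distortion of $\phi$ in the annulus $\{r_{n+1}\le|w|\le r_n\}$, which changes the integral by a bounded multiplicative factor absorbed upon dividing by $|\ln(r-1)|\to\infty$. Second, for $t>0$ preimages close to $\Crit(f)\cap J(f)$ produce small $|(f^n)'(y)|$ and could in principle inflate the preimage sum beyond $e^{nP(t)}$; the hypothesis that $f$ is expanding away from critical points, combined with the nice-couple inducing scheme developed in the body of the paper, is exactly what rules out such exponential growth and secures the uniform preimage-sum characterization needed above.
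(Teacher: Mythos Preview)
Your approach is essentially the same as the paper's: the paper simply quotes the identity $\beta_\phi(t) = P(t)/\ln d + t - 1$ from \cite[Lemma~2]{BinMakSmi03} and then reads off the conclusions from the Main Theorem, exactly as you do in your first paragraph. Your additional sketch of the identity via the B\"ottcher functional equation is correct in outline and goes beyond what the paper records.

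One small correction: the identity $\beta_\phi(t) = (t-1) + P(t)/\ln d$ is a general fact valid for \emph{every} polynomial with connected Julia set; it does not require the hypotheses of the Main Theorem. In particular, your ``second obstacle'' is not an obstacle for the identity itself --- the preimage-sum characterization of $P(t)$ from \cite{PrzRivSmi04} already holds for all rational maps of degree at least two, with no assumption on nice couples or expansion away from critical points. Those hypotheses enter only when you invoke the Main Theorem to conclude that $P$ is real analytic on $(\tneg,\tpos)$ and linear outside. So you can drop that paragraph entirely without loss.
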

This corollary follows directly from the fact that for each~$t \in \R$ we have~$\beta_\phi(t) = P(t) / \ln d + t - 1$, see for example~\cite[Lemma~2]{BinMakSmi03}.

We will now consider several known results related to~the Main Theorem.

As mentioned above, Makarov and Smirnov showed in~\cite{MakSmi00} that the conclusions of the Main Theorem hold for every rational map on $(- \infty, 0)$.
Furthermore, they characterized all those rational maps whose condensation point~$\tneg$ is finite; see~\S\ref{ss:Lyapunov spectrum}.

For a uniformly hyperbolic rational map we have $\tneg = - \infty$ and $\tpos = + \infty$, and for a sub-hyperbolic polynomial with connected Julia set we have $\tpos = + \infty$~\cite{MakSmi96}.
The freezing point~$\tpos$ is finite whenever~$f$ does not satisfy the \TCE\footnote{By~\cite[Main Theorem]{PrzRivSmi03}~$f$ satisfies the \TCE{} if, and only if, $\chiinf(f) > 0$.} (Proposition~\ref{p:asymptotes}).
In fact, in this case the freezing point~$\tpos$ is the first zero of the pressure function.
On the other hand, there is an example in~\cite[\S3.4]{MakSmi03} of a generalized polynomial-like map satisfying the \TCE\footnote{In fact this map has the stronger property that no critical point in its Julia set is recurrent.} and whose freezing point~$\tpos$ is finite.

When~$f$ is a generalized polynomial-like map without recurrent critical points, the part of the Main Theorem concerning the analyticity of the pressure function was shown in~\cite{MakSmi00,MakSmi03,StrUrb03}.
Note that the results of~\cite{StrUrb03} apply to maps with parabolic periodic points.

Let us also mention that, if~$f$ is an at most finitely renormalizable polynomial without indifferent periodic points and such that for every critical value~$v$ in~$J(f)$
$$ \lim_{n \to + \infty} |(f^n)'(v)| = + \infty, $$
and if~$t_0 > 0$ is the first zero of the pressure function, then the absolutely continuous invariant measure constructed in~\cite{RivShe1004} is an equilibrium state of~$f$ for the potential~$-t_0 \ln |f'|$, see also~\cite{GraSmi09,PrzRiv07}.

In the case of a general transitive multimodal interval map with non-flat critical points, a result analogous to the Main Theorem was shown by Bruin and Todd in~\cite[Theorem~6]{BruTod08} for~$t$ in a neighborhood of~$0$.
Similar results for~$t$ in a neighborhood of~$[0, 1]$ were shown by Pesin and Senti in~\cite{PesSen08} for multimodal interval maps with non-flat critical points satisfying the Collet-Eckmann condition and some additional properties (see also \cite[Theorem~2]{BruTod09a}) and by Bruin and Todd in~\cite[Theorem~1]{BruTod09a}, for~$t$ in a one-sided neighborhood of~$1$, and for multimodal interval maps with non-flat critical points and with a polynomial growth of the derivatives along the critical orbits; see also~\cite{BruKel98}.

In~\cite[Proposition~7]{Dob09}, Dobbs shows that there is a quadratic polynomial with real coefficients~$f_0$ such that the pressure function, \emph{defined for the restriction of~$f_0$ to a certain compact interval}, has infinitely many phase transitions before it vanishes.
This behavior of~$f_0$ as an interval map is in sharp contrast with its behavior as a complex map: Our results imply that the pressure function of~$f_0$, viewed as a map acting on the (complex) Julia set of~$f_0$, is real analytic before it vanishes.

\subsection{Notes and references}\label{ss:notes and references}
See the book~\cite{Rue04} for an introduction to the thermodynamic formalism and~\cite{PUbook,Zin96} for an introduction in the case of rational maps.

For results concerning other potentials, see~\cite{DenUrb91e,GelWol07,Prz90,Urb03c} for the case of rational maps, and~\cite{BruTod08,PesSen08} and references therein for the case of multimodal interval maps with non-flat critical points.

For a rational map~$f$ satisfying the \TCE{} and for~$t = \HDhyp(f)$, the construction of the corresponding equilibrium state given here gives a new proof of the existence of an absolutely continuous invariant measure, with respect to a conformal measure.
More precisely, it gives a new proof of~\cite[Key Lemma]{PrzRiv07}.

\subsection{Strategy and organization}
We now describe the strategy of the proof of the Main Theorem, and simultaneously describe the organization of the paper.
Our results are either well-known or vacuous for rational maps without critical points in the Julia set, so we will (implicitly) assume that all the rational maps we consider have at least one critical point in the Julia set.

In~\S\ref{s:preliminaries} we review some general results concerning the pressure function, including some of the different characterizations of the pressure function given in~\cite{PrzRivSmi04}.
We also review some results concerning the asymptotic behavior of the derivative of the iterates of a rational map.
These results are mainly taken or deduced from results in~\cite{Prz99,PrzRivSmi03,PrzRivSmi04}.

To prove the Main Theorem we make use of the inducing scheme introduced in~\cite{PrzRiv07}, which is developed in the more general setting considered here in~\S\S\ref{s:nice induced maps},~\ref{s:lifting}.
In~\S\ref{ss:nice sets and couples} we recall the definitions of nice sets and couples, and introduce a weaker notion of nice couples that we call ``pleasant couples''.
Pleasant couples will allow us to handle non-primitive renormalizations, see~Remark~\ref{r:primitive}.
Then we recall in~\S\ref{ss:canonical induced map} the definition of the canonical induced map associated to a nice (or pleasant) couple.
We also review the decomposition of its domain of definition into ``first return'' and ``bad pull-backs'' as well as the sub-exponential estimate on the number of bad pull-backs of a given order (\S\ref{ss:bad pull-backs}).
In~\S\ref{ss:two variable pressure} we consider a two variable pressure function associated to such an induced map, that will be very important for the rest of the paper.
This pressure function is analogous to the one introduced by Stratmann and Urbanski in~\cite{StrUrb03}.

In~\S\ref{s:lifting} we give sufficient conditions on a nice (or pleasant) couple so that the conclusions of the Main Theorem hold for values of~$t$ in a neighborhood of an arbitrary $t_0 \in (\tneg, \tpos)$ (Theorem~\ref{t:lifting}).
These conditions are formulated in terms of the two variable pressure function defined in~\S\ref{ss:two variable pressure}.
We follow the method of~\cite{PrzRiv07} for the construction of the conformal measures and the equilibrium states, which is based on the results of Mauldin and Urbanski in~\cite{MauUrb03}.
As in~\cite{PesSen08}, we use a result of Zweim\"uller in~\cite{Zwe05} to show that the invariant measure we construct is in fact an equilibrium state.
The uniqueness is a direct consequence of the results of Dobbs in~\cite{Dob0804}, generalizing~\cite{Led84}.
Finally, we use the method introduced by Stratmann and Urbanski in~\cite{StrUrb03} to show that the pressure function is real analytic.
Here we make use of the fact that the two variable pressure function is real analytic on the interior of the set where it is finite, a result shown by Mauldin and Urbanski in~\cite{MauUrb03}.

The proof the Main Theorem is contained in \S\S\ref{s:Whitney decomposition}, \ref{s:pull-back contribution}, \ref{s:proof of nice thermodynamics}.
The proof is divided into two parts.
The first, and by far the most difficult one, is to show that for $t_0 \in (\tneg, \tpos)$ the two variable pressure associated to a sufficiently small nice (or pleasant) couple is finite on a neighborhood of~$(t, p) = (t_0, P(t_0))$.
To do this we use the strategy of~\cite{PrzRiv07}: we use the decomposition of the domain of definition of the induced map associated to a nice (or pleasant) couple, into first return and bad pull-backs evoked in~\S\ref{ss:bad pull-backs}.
Unfortunately, for values of~$t$ such that $P(t) < 0$, there does not seem to be a natural way to adapt the density introduced in~\cite{PrzRiv07} to estimate the contribution of a bad pull-back.
Instead we use a different argument involving a Whitney type decomposition of a pull-back, which is one of the main technical tools introduced in this paper.
Roughly speaking, we have replaced the ``annuli argument'' of~\cite[Lemma~5.4]{PrzRiv07} by an argument involving ``Whitney squares'', that allow us to make a direct estimate avoiding an induction on the number visits to the critical point.
The Whitney type decomposition is introduced in~\S\ref{s:Whitney decomposition} and the estimate on the contribution of a (bad) pull-back is given in~\S\ref{s:pull-back contribution}.
The finiteness of the two variable pressure function is shown in~\S\ref{ss:finiteness}.
The second part of the proof, that for each~$t$ close to~$t_0$ the two variable pressure function vanishes at~$(t, p) = (t, P(t))$, is given in~\S\ref{ss:vanishing}.
Here we have replaced the analogous (co\nobreakdash-)dimension argument of~\cite{PrzRiv07}, with an argument involving the pressure function of the rational map.

Appendix~\ref{s:nice puzzles} is devoted to show that the conclusions of the Main Theorem hold for several classes of polynomials.
In~\S\ref{ss:non-renormalizable} we show that each at most finitely renormalizable polynomial without indifferent periodic points satisfies the hypotheses of the Main Theorem (Theorem~\ref{t:non-renormalizable}).
Then in~\S\ref{ss:infinitely renormalizable} we show that each infinitely renormalizable quadratic polynomial for which the diameters of small Julia sets converge to~$0$ satisfies the hypotheses of Theorem~\ref{t:nice thermodynamics}.
Finally, \S\ref{ss:real quadratic} is devoted to the case of quadratic polynomials with real coefficients.

In Appendix~\ref{s:applications} we give applications of our main results to rigidity, multifractal analysis, and level\nobreakdash-1 large deviations.
\subsection{Acknowledgments} 
We are grateful to Weixiao Shen and Daniel Smania for their help with references, Weixiao Shen again and Genadi Levin for their help with the non\nobreakdash-renormalizable case and Henri Comman for his help with the large deviations results.
We also thank Neil Dobbs, Godofredo Iommi, Jan Kiwi and Mariusz Urbanski for useful conversations and comments.
Finally, we are grateful to Krzysztof Baranski for making Figure~\ref{f:bad} and the referee for his suggestions and comments that help to clarify some of the concepts introduced in the paper.
\section{Preliminaries}\label{s:preliminaries}
The purpose of this section is to give some some general properties of the pressure function (\S\S\ref{ss:pressure properties}, \ref{ss:conformal measures}), and some characterizations of~$\chiinf$ and~$\chisup$ (\S\ref{ss:individual pressure}).
These results are mainly taken or deduced from the results in~\cite{Prz99,PrzRivSmi03,PrzRivSmi04}.
We also fix some notation and terminology in \S\ref{ss:notation and terminology}, that will be used in the rest of the paper.

Throughout the rest of this section we fix a rational map~$f$ of degree at least two.
We will denote $h_\mu(f), \chi_\mu(f), \ldots$ just by $h_\mu, \chi_\mu, \ldots$ .
For simplicity we will assume that no critical point of~$f$ in the Julia set is mapped to another critical point under forward iteration.
The general case can be handled by treating whole blocks of critical points as a single critical point; that is, if the critical points $c_0, \ldots, c_k \in J(f)$ are such that~$c_i$ is mapped to~$c_{i + 1}$ by forward iteration, and maximal with this property, then we treat this block of critical points as a single critical point. 

\subsection{Notation and terminology}\label{ss:notation and terminology}
We will denote the extended real line by $\RR \= \R \cup \{ - \infty, + \infty \}$.

Distances, balls, diameters and derivatives are all taken with respect to the spherical metric.
For $z \in \CC$ and $r > 0$, we denote by $B(z, r) \subset \CC$ the ball centered at~$z$ and with radius~$r$.

For a given $z \in \CC$ we denote by~$\deg_f(z)$ the local degree of~$f$ at~$z$, and for $V \subset \CC$ and $n \ge 0$, each connected component of $f^{-n}(V)$ will be called \emph{a pull-back of~$V$ by~$f^n$}.
When~$V$ is clear from the context, for such a set~$W$ we put $m_W = n$.
When $n = 0$ we obtain that each connected component~$W$ of~$V$ is a pull-back of~$V$ with $m_W = 0$.
In the case where~$f^n$ is univalent on~$W$ we will say that~$W$ is an \emph{univalent pull-back of~$V$ by~$f^n$}.
Note that the set~$V$ is not assumed to be connected.

We will abbreviate ``Topological Collet-Eckmann'' by TCE.
\subsection{General properties of the pressure function}\label{ss:pressure properties}
Given an integer~$n \ge 1$ let $\Lambda_n : \CC \times \R \to \RR$ be the function defined by
$$
\Lambda_n(z_0, t) \= \sum_{w \in f^{-n}(z_0)} |(f^n)'(z_0)|^{-t}.
$$
Then for every $t \in \R$ and every~$z_0$ in~$\CC$ outside a set of Hausdorff dimension~$0$, we have
\begin{equation}\label{e:tree pressure}
\limsup_{n \to + \infty} \tfrac{1}{n} \ln \Lambda_n(z_0, t) = P(t),
\end{equation}
see~\cite{Prz99,PrzRivSmi04}.

In the following proposition,
\begin{multline*}
\HDhyp(f)
\=
\sup \{ \HD(X) \mid X \text{ compact and invariant subset of~$\CC$} \\
\text{where~$f$ is uniformly expanding} \}.
\end{multline*}
\begin{prop}\label{p:asymptotes}
Given a rational map~$f$ of degree at least two, the function
$$ t \mapsto P(t) + t \chiinf \text{ (resp. } t \mapsto P(t) + t \chisup \text{)}, $$
is convex, non\nobreakdash-increasing, and non\nobreakdash-negative on $[0, + \infty)$ (resp. $(- \infty, 0]$).
Moreover $\tneg < 0$, and we have $\tpos \ge \HDhyp(f)$ with strict inequality if, and only if,~$f$ satisfies the \TCEC.

In particular for all~$t$ in $(\tneg, \tpos)$ we have $ P(t) >  \max \{ - t \chiinf, - t \chisup \}, $ and for all~$t$ in $\R \setminus (\tneg, \tpos)$ we have $P(t) = \max \{ - t \chiinf, - t \chisup \}$.
\end{prop}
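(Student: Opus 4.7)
The key is to use the variational principle~\eqref{e:variational principle} to rewrite the two auxiliary functions as suprema of affine functions,
\[
P(t)+t\chiinf = \sup_{\mu\in\sM(f)}\bigl\{h_\mu - t(\chi_\mu-\chiinf)\bigr\}, \quad P(t)+t\chisup = \sup_{\mu\in\sM(f)}\bigl\{h_\mu + t(\chisup-\chi_\mu)\bigr\},
\]
each being a supremum of affine functions whose slopes have a fixed sign ($\le 0$ in the first case, $\ge 0$ in the second). Since a supremum of affine functions is convex and a supremum of monotone functions of a common type inherits that monotonicity, this immediately yields convexity on $\R$ and the claimed monotonicity on the relevant half-line. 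For non-negativity, I would choose a sequence $(\mu_n)$ in $\sM(f)$ with $\chi_{\mu_n}\to\chiinf$ and exploit $h_{\mu_n}\ge 0$: evaluating the sup at $\mu_n$ gives $P(t)+t\chiinf\ge h_{\mu_n}-t(\chi_{\mu_n}-\chiinf)\to \liminf_n h_{\mu_n}\ge 0$ for every $t\ge 0$, and the mirror choice $\chi_{\mu_n}\to\chisup$ handles $P(t)+t\chisup$ on $(-\infty,0]$.

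To locate the condensation point, I would observe that $P(0)=\log\deg f>0$ by the classical identification of $P(0)$ with the topological entropy of a rational map. Since convex functions are continuous on the interior of their effective domain, $P(t)+t\chisup$ remains strictly positive on an open neighborhood of $t=0$, forcing $\tneg<0$. For $\tpos\ge\HDhyp(f)$ I would invoke two external inputs from the cited papers: Bowen's formula $P(\HDhyp(f))=0$, obtained by applying~\eqref{e:tree pressure} on hyperbolic invariant subsets as in~\cite{Prz99,PrzRivSmi04}, and the non-negativity $\chiinf\ge 0$, valid for any rational map of degree at least two. Together these yield $P(\HDhyp(f))+\HDhyp(f)\chiinf\ge 0$; combined with monotonicity and the definition of $\tpos$, this gives $\tpos\ge\HDhyp(f)$. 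The strict inequality amounts to $\HDhyp(f)\chiinf>0$, hence, since $\HDhyp(f)>0$, to $\chiinf>0$; by \cite[Main Theorem]{PrzRivSmi03} recalled in footnote~8, this last condition is precisely the \TCEC.

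The concluding dichotomy is then formal. For $t\ge 0$ one has $\max\{-t\chiinf,-t\chisup\}=-t\chiinf$, so the strict bound $P(t)>\max\{-t\chiinf,-t\chisup\}$ is equivalent to $P(t)+t\chiinf>0$, which holds exactly on $[0,\tpos)$ by the definition of $\tpos$. For $t\ge\tpos$ the non-negative function $t\mapsto P(t)+t\chiinf$ vanishes at $\tpos$ by continuity, and by monotonicity must vanish on all of $[\tpos,\infty)$, yielding the desired equality $P(t)=-t\chiinf=\max\{-t\chiinf,-t\chisup\}$. The case $t\le 0$ is completely symmetric, with $\chiinf$ replaced by $\chisup$ and $\tpos$ by $\tneg$.

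The main obstacle is the importation of the two external inputs just mentioned---the identity $P(\HDhyp(f))=0$ and the characterization of $\chiinf>0$ as the \TCEC---without which the proposition would not close. Given these, the remainder is a standard application of convex-analytic envelope theory together with the variational principle.
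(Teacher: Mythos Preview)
Your approach is essentially the paper's: both express $P(t)+t\chiinf$ and $P(t)+t\chisup$ as suprema of affine functions of fixed-sign slope to obtain convexity and monotonicity, and both import the same external inputs from \cite{Prz99} and \cite{PrzRivSmi03} to locate $\tneg$ and $\tpos$. Your treatment of the concluding dichotomy is more explicit than the paper's, which simply records it as a consequence.

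There is, however, one logical slip in your deduction of $\tpos \ge \HDhyp(f)$. From $P(\HDhyp(f))=0$ and $\chiinf\ge 0$ you correctly obtain $P(\HDhyp(f))+\HDhyp(f)\chiinf\ge 0$, but monotonicity alone does not then force $\tpos\ge\HDhyp(f)$: a convex, non-increasing, non-negative function may vanish on an interval, so nothing yet rules out its first zero lying strictly to the left of $\HDhyp(f)$ (which would give $\tpos<\HDhyp(f)$). What closes this --- and what the paper actually uses --- is the sharper input from \cite{Prz99} that $P$ is \emph{strictly positive} on $(0,\HDhyp(f))$, i.e.\ that $\HDhyp(f)$ is the first zero of $P$; then $P(t)+t\chiinf\ge P(t)>0$ on $[0,\HDhyp(f))$ directly. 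The same first-zero fact is also what handles the non-TCE direction $\chiinf=0\Rightarrow\tpos=\HDhyp(f)$, which your write-up leaves implicit. Since you already cite \cite{Prz99}, this is only a matter of invoking the full strength of what is proved there.
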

\begin{proof}
For each ~$\mu \in \sM(f)$ the function $t \mapsto h_\mu(f) - t(\chi_\mu - \chiinf)$ (resp. $t \mapsto h_\mu - t(\chi_\mu - \chisup)$) is affine and non\nobreakdash-increasing on $[0, + \infty)$ (resp. $(-\infty, 0]$).
As by definition
$$ P(t) = \sup \{ h_\mu - t \chi_\mu \mid \mu \in \sM(f) \}, $$
we conclude that the function $t \mapsto P(t) + t \chiinf$ (resp. $t \mapsto P(t) + t \chiinf$) is convex and non\nobreakdash-increasing on $[0, + \infty)$ (resp. $(-\infty, 0]$).
It also follows from the definition that $t \mapsto P(t) + t \chiinf$ (resp. $t \mapsto P(t) + t \chiinf$) is non\nobreakdash-negative on this set.

The inequalities~$\tneg < 0$ and $\tpos \ge \HDhyp(f)$ follow from the fact that $\chiinf$ is non\nobreakdash-negative and from the fact that the pressure function~$P$ is strictly positive on $(0, \HDhyp(f))$~\cite{Prz99}.
When~$f$ satisfies the \TCEC, then $\chiinf > 0$~\cite[Main Theorem]{PrzRivSmi03} and thus $\tpos > \HDhyp(f)$.
When $f$ does not satisfy the \TCEC, then $\chiinf = 0$~\cite[Main Theorem]{PrzRivSmi03} and therefore the equality $\tpos = \HDhyp(f)$ follows from the fact that $\HDhyp(f)$ is the first zero of the function~$P$~\cite{Prz99}.
\end{proof}

\subsection{The pressure function and conformal measures}\label{ss:conformal measures}
For real numbers~$t$ and~$p$ we will say that a finite Borel measure~$\mu$ is $(t, p)$-\emph{conformal} for~$f$, if for each Borel subset~$U$ of~$\CC$ on which~$f$ is injective we have
$$ \mu(f(U)) = \exp(p) \int_U |f'|^t d \mu. $$
By the locally eventually onto property of~$f$ on~$J(f)$ it follows that if the topological support of a $(t, p)$-conformal measure is contained in~$J(f)$, then it is in fact equal to~$J(f)$.
\begin{prop}\label{p:conformal measures}
Let~$f$ be a rational map of degree at least two.
Then for each $t \in (\tneg, + \infty)$ there exists a $(t, P(t))$-conformal measure for~$f$ supported on $J(f)$, and for each real number~$p$ for which there is a $(t, p)$-conformal measure for~$f$ supported on~$J(f)$ we have $p \ge P(t)$.
\end{prop}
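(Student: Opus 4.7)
My plan is to prove the existence by a Patterson–Sullivan construction anchored on the tree-pressure identity~\eqref{e:tree pressure}, and the lower bound by applying the $(t,p)$-conformal relation to univalent pull-backs of a small ball. For the existence, I would fix $z_0 \in J(f)$ for which~\eqref{e:tree pressure} holds (such $z_0$ exists since the exceptional set has Hausdorff dimension zero while~$J(f)$ has positive Hausdorff dimension), and form the Dirichlet series $\Sigma(s) \= \sum_{n \ge 1} \exp(-sn) \Lambda_n(z_0, t)$, whose abscissa of convergence is precisely~$P(t)$ by~\eqref{e:tree pressure}. For $s > P(t)$ set
$$
\mu_s \= \Sigma(s)^{-1} \sum_{n \ge 1} \exp(-sn) \sum_{w \in f^{-n}(z_0)} |(f^n)'(w)|^{-t} \delta_w,
$$
which is a probability measure supported on~$J(f)$. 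A direct reindexing under the transfer operator $\mathcal{L}g(x) \= \sum_{y \in f^{-1}(x)} |f'(y)|^{-t} g(y)$ yields
$$
\int \mathcal{L}g \, d\mu_s \;=\; \exp(s) \int g \, d\mu_s \;-\; \Sigma(s)^{-1} \sum_{y \in f^{-1}(z_0)} |f'(y)|^{-t} g(y),
$$
so any weak-$*$ limit~$\mu$ of~$\mu_s$ as $s \searrow P(t)$ is $(t, P(t))$-conformal, provided $\Sigma(s) \to \infty$. Should $\Sigma$ converge at $s = P(t)$, I would insert a Patterson slowly varying weight $\kappa(n)$ into the series to force divergence — a standard modification. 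The hypothesis $t > \tneg$ is used here to guarantee $P(t) > -t\chisup$ via Proposition~\ref{p:asymptotes}, preventing the preimage masses from collapsing onto an orbit of maximal Lyapunov exponent and ensuring the limit is genuinely a measure of the desired type.

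For the lower bound, suppose $\mu$ is $(t, p)$-conformal and supported on~$J(f)$. Fix $z_0 \in J(f)$ satisfying~\eqref{e:tree pressure} and choose $r > 0$ with $\mu(B(z_0, r)) > 0$. For each $n \ge 1$ and each $w \in f^{-n}(z_0)$ whose pull-back component $W_{w,n}$ of $B(z_0, r)$ is univalent on a slightly larger concentric ball, the Koebe Distortion Theorem combined with iteration of the conformality relation gives, for a constant~$C$ independent of~$n$ and~$w$,
$$
\mu(W_{w,n}) \;\ge\; C^{-1} \exp(-np) \, |(f^n)'(w)|^{-t} \, \mu(B(z_0, r)).
$$
Since the components $W_{w,n}$ are pairwise disjoint, summing over the univalent $w$ and bounding the total by $\mu(\CC)$ yields
$$
\sum_{w \text{ univalent}} |(f^n)'(w)|^{-t} \;\le\; C \, \mu(\CC) \, \mu(B(z_0, r))^{-1} \, \exp(np).
$$
Taking $\tfrac{1}{n}\ln$ and $\limsup$ produces $p$ as an upper bound for the univalent-restricted exponential rate.

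The delicate step — and the main obstacle — is to match this univalent rate with~$P(t)$ itself, i.e., to show that restricting~$\Lambda_n(z_0, t)$ to univalent pull-backs does not lower its exponential rate. I would handle this by additionally selecting $z_0$ outside the $\omega$-limit set of $\Crit(f)$ and outside all iterated pre-images of critical points. With such a choice, the standard sub-exponential estimates on the number of non-univalent pull-backs of bounded scale — of the kind developed in~\cite{PrzRiv07,PrzRivSmi04} — ensure that non-univalent pull-backs contribute only sub-exponentially, so the univalent and full exponential rates coincide; alternatively, one can quote directly the refinement of~\eqref{e:tree pressure} for univalent pull-backs proved in~\cite{PrzRivSmi04}. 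Either route gives $p \ge P(t)$, completing the proof.
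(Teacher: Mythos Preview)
Your approach is essentially correct and is, in fact, the substance behind the cited references: the paper's own proof is a pure citation argument, deferring both existence and the minimality inequality to~\cite{PrzRivSmi04} (and to~\cite{MakSmi00,DenUrb91e} for $t\le 0$). What you have written is a sketch of the Patterson--Sullivan construction and the univalent-preimage estimate that those papers carry out, so at the level of ideas the two routes coincide.

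One technical point you pass over: in the existence part, the weak-$*$ limit of~$\mu_s$ need not automatically satisfy the conformality relation at critical points when $t>0$, because $\mathcal{L}g$ is unbounded there and the duality $\int\mathcal{L}g\,d\mu = e^{P(t)}\int g\,d\mu$ only gives conformality away from~$\Crit(f)$. One must additionally show the limit measure is non-atomic at critical points (or at least that the conformality relation extends across them), which is handled in~\cite{Prz99,PrzRivSmi04} but is not automatic from your transfer-operator identity. Your explanation of the role of $t>\tneg$ --- preventing mass collapse onto a top-Lyapunov periodic orbit --- is the right intuition for the negative-$t$ side, though it is not quite how the argument is organized in~\cite{MakSmi00}. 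For the lower bound you correctly isolate the only genuine difficulty (matching the univalent and full tree-pressure rates) and resolve it by citing~\cite{PrzRivSmi04}, which is exactly what the paper does; so on that half your argument and the paper's are the same in content.
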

\begin{proof}
When~$t = 0$, the assertions are well known, see for example~\cite[p.~104]{DenUrb91e}.
The case~$t > 0$ is given by~\cite[Theorem~A]{PrzRivSmi04}.
In the case $t \in (\tneg, 0)$ the existence is given by~\cite[\S3.5]{MakSmi00} (see also~\cite[Theorem~A.7]{PrzRivSmi04}), and in~\cite[Proposition~A.11]{PrzRivSmi04} it is shown that if for some real number~$p$ there is a $(t, p)$-conformal measure, then in fact $p = P(t)$.
\end{proof}
\subsection{Characterizations of $\chiinf$ and~$\chisup$}\label{ss:individual pressure}
The following proposition gives some characterizations of~$\chiinf$ and~$\chisup$, which are obtained as direct consequences of the results in~\cite{PrzRivSmi03}.

For each~$\alpha > 0$ put
$$ E_{\alpha} = \bigcap_{n_0 = 1}^{+ \infty} \bigcup_{n = 1}^{+ \infty} B \left( f^n(\Crit(f)), \max \{n_0, n \}^{- \alpha} \right). $$
Observe that the Hausdorff dimension of~$E_\alpha$ is less than or equal to~$\alpha^{-1}$.
It thus follows that the Hausdorff dimension of the set~$E_\infty \= \bigcap_{\alpha > 0} E_\alpha$ is equal to~0.
\begin{prop}\label{p:individual pressure}
For a rational map~$f$ of degree at least two, the following properties hold.
\begin{enumerate}
\item[1.]
Given a repelling periodic point~$p$ of~$f$, let~$m$ be its period and put $\chi(p) \= \tfrac{1}{m} \ln |((f^m)'(p)|$.
Then we have
$$ \inf \{ \chi(p) \mid p \text{ is a repelling periodic point of } f \}
=
\chiinf,$$
$$ \sup \{ \chi(p) \mid p \text{ is a repelling periodic point of } f \}
=
\chisup.$$
\item[2.]
$$ \lim_{n \to + \infty} \tfrac{1}{n} \ln \sup \{ |(f^n)'(z)| \mid z \in \CC \}
=
\chisup. $$
\item[3.]
For each $z_0 \in \CC \setminus E_\infty$ we have
\begin{equation}\label{e:individual minimal pressure}
\lim_{n \to + \infty} \tfrac{1}{n} \ln
\min \{ |(f^n)'(w)| \mid w \in f^{-n}(z_0) \}
=
\chiinf,
\end{equation}
\begin{equation}\label{e:individual maximal pressure}
\lim_{n \to + \infty} \tfrac{1}{n} \ln
\max \{ |(f^n)'(w)| \mid w \in f^{-n}(z_0) \}
=
\chisup.
\end{equation}
\end{enumerate}
\end{prop}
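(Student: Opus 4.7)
The plan is to derive all three statements from results established in~\cite{PrzRivSmi03,Prz99}, treating Part~1 as the fundamental ingredient from which Parts~2 and~3 follow.

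For Part~1, each repelling periodic point $p$ of period $m$ defines an atomic $f$-invariant measure $\mu_p$ supported on the orbit of~$p$, with $\chi_{\mu_p} = \chi(p)$ and support inside $J(f)$; hence $\inf_p \chi(p) \geq \chiinf$ and $\sup_p \chi(p) \leq \chisup$ are immediate. The reverse inequalities are the main content and are obtained by the Katok-type shadowing theorem from~\cite{PrzRivSmi03}: any ergodic $\mu \in \sM(f)$ can be approximated, in the Lyapunov exponent, by repelling periodic orbits. The construction uses Pliss/hyperbolic times of generic points, which exist because $\chi_\mu \geq 0$ for every $\mu \in \sM(f)$ (a theorem of Przytycki in~\cite{Prz99}). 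Ergodic decomposition then lets us take infima and suprema over ergodic measures, giving both equalities.

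For Part~2, the lower bound is immediate from Part~1: given $\e > 0$, pick a repelling periodic $p$ of period $m$ with $\chi(p) > \chisup - \e$; then $\sup_{z \in \CC} |(f^{nm})'(z)| \geq |(f^{nm})'(p)| = \exp(nm\, \chi(p))$, yielding $\liminf_n \tfrac{1}{n}\ln\sup_z|(f^n)'(z)| \geq \chisup - \e$. For the upper bound one argues by contradiction: if along some sequences $n_k \to \infty$ and $z_k$ we had $\tfrac{1}{n_k}\ln|(f^{n_k})'(z_k)| > \chisup + \e$, the Pliss/telescope construction of~\cite{PrzRivSmi03} produces repelling periodic points with Lyapunov exponent exceeding $\chisup$, contradicting Part~1.

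For Part~3, the hypothesis $z_0 \notin E_\infty$ says that $z_0$ remains at polynomially bounded distance from the forward orbit of $\Crit(f)$, so Koebe-type bounded distortion applies to all pull-backs by $f^n$ of a sufficiently small ball $B(z_0, r_n)$ with $r_n$ shrinking only polynomially in $n$. Equation~\eqref{e:individual maximal pressure} then mirrors Part~2: the upper bound is inherited from Part~2, and the lower bound comes from the density of $\bigcup_n f^{-n}(z_0)$ in $J(f)$, which lets us find preimages shadowing a repelling periodic orbit of Lyapunov exponent close to $\chisup$. For Equation~\eqref{e:individual minimal pressure}, the upper bound $\limsup \leq \chiinf$ is obtained by the same shadowing idea applied to a repelling periodic orbit of Lyapunov exponent close to $\chiinf$ (which exists by Part~1); the distortion estimate converts a nearby preimage of $z_0$ into an actual element of $f^{-n}(z_0)$ with controlled derivative. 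For $\liminf \geq \chiinf$ one argues by contradiction: a sequence $w_{n_k} \in f^{-n_k}(z_0)$ with $\tfrac{1}{n_k}\ln|(f^{n_k})'(w_{n_k})| < \chiinf - \e$ gives empirical measures $\mu_{n_k} = \tfrac{1}{n_k}\sum_{j=0}^{n_k-1}\delta_{f^j(w_{n_k})}$ whose weak-$*$ limits lie in $\sM(f)$ and, by Part~1 applied along repelling periodic approximants of $f^j(w_{n_k})$ constructed from Pliss times, must have Lyapunov exponent at most $\chiinf - \e$, contradicting the definition of $\chiinf$.

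The main obstacle is the liminf lower bound in the minimal-derivative statement of Part~3, because $\ln|f'|$ is unbounded below at critical points and the weak-$*$ continuity of $\chi_\mu$ is not automatic. The hypothesis $z_0 \notin E_\infty$ is used crucially here, via the distortion estimate, to ensure that the empirical measures do not place uncontrolled mass near $\Crit(f)$, so that the limit Lyapunov exponent is computed correctly and the contradiction with the definition of $\chiinf$ goes through.
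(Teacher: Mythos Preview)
Your treatment of Parts~1 and~2 is essentially along the same lines as the paper, though for the upper bound in Part~2 the paper proceeds more directly: it forms the empirical measures $\mu_n = \tfrac{1}{n}\sum_{j=0}^{n-1}\delta_{f^j(z_n)}$ along points $z_n$ realizing $M_n = \sup_z |(f^n)'(z)|$, passes to a weak-$*$ limit~$\mu$, and uses that $\ln|f'|$ is bounded \emph{above} to get $\chisup \ge \int \ln|f'|\,d\mu \ge \limsup \int \ln|f'|\,d\mu_{n_j} = \chi$. No periodic-point construction is needed.

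The real difficulty is the lower bound $\liminf \ge \chiinf$ in~\eqref{e:individual minimal pressure}, and here your proposed argument has a genuine gap. You want to pass from $\tfrac{1}{n_k}\ln|(f^{n_k})'(w_{n_k})| < \chiinf - \e$ to an invariant measure (or a repelling periodic point) with Lyapunov exponent at most $\chiinf - \e$. Neither route works. The weak-$*$ route fails because $\ln|f'|$ is only \emph{upper} semicontinuous: for $\mu_{n_k}\to\mu$ one gets $\limsup\int\ln|f'|\,d\mu_{n_k}\le\int\ln|f'|\,d\mu$, which is the wrong direction to force $\chi_\mu\le\chiinf-\e$. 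The Pliss/Katok route fails for a dual reason: hyperbolic times and the associated shadowing produce periodic points whose exponent is \emph{at least} a prescribed threshold, not at most; there is no mechanism to manufacture a repelling periodic orbit with \emph{small} exponent out of an orbit segment with small average derivative. Your closing claim that $z_0\notin E_\infty$ controls the mass of the empirical measures near $\Crit(f)$ is not substantiated: that hypothesis controls the distance of $z_0$ to the \emph{forward} critical orbit, hence distortion of pull-backs to preimages of $z_0$, but it says nothing about how close the intermediate iterates $f^j(w_{n_k})$ come to $\Crit(f)$.

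The paper circumvents this obstruction entirely by a different idea: it shows that the quantity $A(z_0)=\liminf_n \tfrac{1}{n}\ln\min_{w\in f^{-n}(z_0)}|(f^n)'(w)|$ is \emph{constant} on $\CC\setminus E_\infty$, using a chain-of-discs argument and the fact (from~\cite{Prz99}) that two points of $\CC\setminus E_\infty$ can be joined by at most $n^\epsilon$ discs whose doubles avoid $\bigcup_{i\le n}f^i(\Crit(f))$, so that the ratios of derivatives at corresponding preimages are subexponential in~$n$. It then verifies $A(z_0)\ge\chiinf$ at a \emph{single} well-chosen $z_0$ lying in a uniformly hyperbolic subset of $J(f)$, where the specification property of~\cite[Lemma~3.1]{PrzRivSmi03} applies. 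This is the step you are missing.
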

\begin{proof}
\

\partn{1}
The equality involving~$\chiinf$ was shown in \cite[Main Theorem]{PrzRivSmi03}.
To prove the equality involving~$\chisup$, first note that if~$p$ is a repelling periodic point of~$f$, and if we denote by~$m$ its period, then the measure $\mu \= \sum_{j = 0}^{m - 1} \delta_{f^j(p)}$ is invariant by~$f$ and its Lyapunov exponent is equal to~$\chi(p)$.
It thus follows that
$$ \sup \{ \chi(p) \mid p \text{ repelling periodic point of } f \} \le \chisup. $$
The reverse inequality follows from the fact, shown using Pesin theory, that for every ergodic and invariant probability measure~$\mu$ whose Lyapunov exponent is strictly positive and every $\varepsilon > 0$, one can find a repelling periodic point~$p$ such that~$| \chi_\mu - \chi(p)| < \varepsilon$; see for example~\cite[Theorem~10.6.1]{PUbook}.

\partn{2}
For each integer~$n \ge 1$ put
$$ M_n \= \sup \{ |(f^n)'(z)| \mid z \in \CC \}. $$
Note that for integers~$m, n \ge 1$ we have $M_{m + n} \le M_m \cdot M_n$, so the limit
$$ \chi \= \lim_{n \to + \infty} \tfrac{1}{n} \ln M_n $$
exists.
The inequality $\chi \ge \chisup$ follows from part~1.
To prove the reverse inequality, for each integer~$n \ge 1$ let~$z_n \in \CC$ be such that $|(f^n)'(z_n)| = M_n$ and put
$$ \mu_n \= \tfrac{1}{n} \sum_{j = 0}^{n - 1} \delta_{f^j(z_n)}. $$
Let~$(n_j)_{j \ge 0}$ be a diverging sequence of integers so that~$\mu_{n_j}$ converges to a measure~$\mu$, which is invariant by~$\mu$.
Since the function $\ln | f' |$ is bounded from above, the monotone convergence theorem implies that
$$ \lim_{A \to - \infty} \int \max \{ A, \ln |f'| \} d \mu
=
\int \ln |f'|d \mu. $$
On the other hand, for each real number~$A$ we have
$$ \int \max \{ A, \ln |f'| \} d \mu
=
\lim_{j \to + \infty} \int \max \{ A, \ln |f'| \} d \mu_{n_j}
\ge
\limsup_{j \to + \infty} \int \ln |f'| d \mu_{n_j}. $$
We thus conclude that
$$ \chisup \ge \int \ln |f'| d \mu
\ge \limsup_{j \to + \infty} \int \ln |f'| d\mu_{n_j} = \chi. $$

\partn{3}
For a point~$z_0 \in \CC$ which is not in the forward orbit of a critical point of~$f$, the inequalities
$$ \limsup_{n \to + \infty} \tfrac{1}{n} \ln
\min \{ |(f^n)'(w)| \mid w \in f^{-n}(z_0) \}
\le
\chiinf, $$
$$ \liminf_{n \to + \infty} \tfrac{1}{n} \ln
\max \{ |(f^n)'(w)| \mid w \in f^{-n}(z_0) \}
\ge
\chisup. $$
are a direct consequence of part~1 and the following property: For each repelling periodic point~$p$ there is a constant $C > 0$ such that for every integer~$n \ge 1$ there is $w \in f^{-n}(z_0)$ satisfying
$$ C^{-1} \exp(n \chi(p)) \le |(f^n)'(w)| \le C \exp(n \chi(p)). $$

Part~2 shows that for each $z_0 \in \CC$ we have
$$ \limsup_{n \to + \infty} \tfrac{1}{n} \ln
\max \{ |(f^n)'(w)| \mid w \in f^{-n}(z_0) \}
\le
\chisup. $$

It remains to show that for every $z_0 \in \CC \setminus E_\infty$ we have
$$ A(z_0) \= \liminf_{n \to + \infty} \tfrac{1}{n} \ln
\min \{ |(f^n)'(w)| \mid w \in f^{-n}(z_0) \}
\ge
\chiinf. $$
We observe fist that this inequality holds for some point~$z_0$ in~$\CC \setminus E_\infty$.
In fact, let~$K$ be a compact subset of~$J(f)$ of non-zero Hausdorff dimension on which~$f$ is uniformly expanding.
Then for each~$z_0 \in K \setminus E_\infty$ the above inequality follows from part~1 and the ``specification property'' of~\cite[Lemma~3.1]{PrzRivSmi03}.
The final observation is that the function~$A$ is constant on $\C\setminus E_\infty$, see \cite[\S3]{Prz99} or also~\cite[\S1]{PrzRivSmi03}.
To prove this fix~$z, w \in \CC \setminus E_\infty$ and for a given integer join~$z$ to~$w$ with a certain number~$M$ of discs~$(U_j)_{j = 1}^M$ such that $z \in U_1$, $z \in U_M$ and such that for each~$j \in \{1, \ldots, M - 1 \}$ we have $U_j\cap U_{j+1}\neq \emptyset$, in such a way that for each~$j \in \{ 1, \ldots, M \}$ the disc~$2U_j$, with the same center as~$U_j$ and twice the radius, is disjoint from $\bigcup_{i=1,....,n} f^i(\Crit(f))$.
By Koebe Distortion Theorem it follows that there is a constant~$B > 0$ such that the absolute value of the ratio of the derivative of~$f^n$ at corresponding points of~$f^{-n}(z)$ and~$f^{-n}(w)$ is bounded by $\exp(B M)$.
The main point, shown in~\cite[\S3]{Prz99}, is that there is~$\epsilon \in (0, 1)$ such that for every sufficiently large~$n$ such a chain of discs exists for some integer~$M$ satisfying $M\le n^\epsilon$.
In particular, the ratio of these derivatives is sub-exponential with~$n$.
This implies that~$A(z) = A(w)$ and completes the proof of the proposition.
\end{proof}

\section{Nice sets, pleasant couples and induced maps}\label{s:nice induced maps}
In~\S\ref{ss:nice sets and couples} we recall the definition and review some properties of nice sets and couples.
We also introduce a notion weaker than nice couple, that we call ``pleasant couple''.
Then we consider the canonical induced map associated to a pleasant couple in~\S\ref{ss:canonical induced map}, as it was introduced in~\cite[\S4]{PrzRiv07} for nice couples, and review some of its properties (\S\ref{ss:bad pull-backs}).
Finally, we introduce in~\S\ref{ss:two variable pressure} a two variable pressure function associated to the a canonical induced map, that will be important in what follows.

Throughout all this section we fix a rational map~$f$ of degree at least two.
\subsection{Nice sets, nice couples, and pleasant couples}\label{ss:nice sets and couples}
Recall that a neighborhood~$V$ of $\CJ$ is a \emph{nice set for}~$f$, if for every $n \ge 1$ we have $f^n(\partial V) \cap V = \emptyset$, and if each connected component of~$V$ is simply connected and contains precisely one critical point of~$f$ in~$J(f)$.

Let $V = \bigcup_{c \in \CJ} V^c$ be a nice set for~$f$.
Then for every pull-back~$W$ of~$V$ we have either
$$
W \cap V = \emptyset \, \text{ or } \, W \subset V.
$$
Furthermore, if~$W$ and~$W'$ are distinct pull-backs of~$V$, then we have either,
$$
W \cap W' = \emptyset,
\, W \subset W'
\, \text{ or } \,
W' \subset W.
$$
For a pull-back $W$ of $V$ we denote by $c(W)$ the critical point in $\CJ$ and by $m_W \ge 0$ the integer such that $f^{m_W}(W) = V^{c(W)}$.
Moreover we put,
$$
K(V)
=
\{ z \in \CC \mid \text{ for every $n \ge 0$ we have $f^n(z) \not \in V$} \}.
$$
Note that~$K(V)$ is a compact and forward invariant set and for each $c \in \CJ$ the set $V^c$ is a connected component of $\CC \setminus K(V)$.
Moreover, if~$W$ is a connected component of $\CC \setminus K(V)$ different from the~$V^c$, then~$f(W)$ is again a connected component of $\CC \setminus K(V)$.
It follows that~$W$ is a pull-back of~$V$ and that~$f^{m_W}$ is univalent on~$W$.

Given a nice set~$V$ for~$f$ and a neighborhood~$\hV$ of~$\overline{V}$ in~$\CC$ we will say that $(\hV, V)$ is a \emph{pleasant couple for~$f$} if for every pull-back~$W$ of~$V$, the pull-back of~$\hV$ by~$f^{m_W}$ containing~$W$ is
\begin{enumerate}
\item[\puno] contained in~$\hV$ if~$W$ is contained in~$V$;
\item[\pdos] disjoint from~$\Crit(f)$ if~$W$ is disjoint from~$V$.
\end{enumerate}
If~$(\hV, V)$ is a pleasant couple for~$f$, then for each~$c \in \CJ$ we denote by~$\hV^c$ the connected component of~$\hV$ containing~$c$ and for each pull-back~$W$ of~$V$ we will denote by~$\hW$ the pull-back of~$\hV$ by~$f^{m_W}$ that contains~$W$ and put~$m_{\hW} \= m_W$ and~$c(\hW) \= c(W)$.
Thus, when~$W$ is disjoint from~$V$, it is shielded from~$\Crit(f)$ by~$\hW$ by property~\pdos.
Otherwise, $W\subset V$ and then the set~$\hW$ may or may not intersect~$\Crit(f)$.
The latter distinction will be crucial in what follows.

Let~$(\hV, V)$ be a pleasant couple for~$f$ and let~$W$ be a connected component of $\CC \setminus K(V)$.
Then for every $j = 0, \ldots, m_W - 1$, the set~$f^j(W)$ is a connected component of $\CC \setminus K(V)$ different from the~$V^c$.
Thus~$f^j(W)$ is disjoint from~$V$ and by property~\pdos{} the set~$\widehat{f^j(W)}$ is disjoint from~$\Crit(f)$.
It follows that~$f^{m_W}$ is univalent on~$\hW$.

A \emph{nice couple for}~$f$ is a pair $(\hV, V)$ of nice sets for~$f$ such that $\overline{V} \subset \hV$, and such that for every $n \ge 1$ we have~$f^n(\partial V) \cap \hV = \emptyset$.
If $(\hV, V)$ is a nice couple for~$f$, then for every pull-back~$\hW$ of~$\hV$ we have either
$$
\hW \cap V = \emptyset
\, \text{ or } \,
\hW \subset V.
$$
It thus follows that each nice couple is pleasant.

\begin{rema}
The definitions of nice sets and couples given here is slightly weaker than that of~\cite{PrzRiv07,Riv07}.
For a set~$V = \bigcup_{c \in \CJ} V^c$ to be nice, in those papers we required the stronger condition that for each integer $n \ge 1$ we have $f^n(\partial V) \cap \overline{V} = \emptyset$, and that the closures of the sets~$V^c$ are pairwise disjoint.
Similarly, for a pair of nice sets $(\hV, V)$ to be a nice couple we required the stronger condition that for each $n \ge 1$ we have $f^n(\partial V) \cap \overline{\hV} = \emptyset$.
The results we need from~\cite{PrzRiv07} still hold with the weaker property considered here.

Observe that if $(\hV, V)$ is a nice couple as defined here, then~$V$ is a nice set in the sense of~\cite{PrzRiv07,Riv07}.
\end{rema}

The following proposition\footnote{We owe this proposition to Shen, from a personal communication.} sheds some light on the definitions above, although it is not used later on; compare with the construction of nice couples in~\S\ref{ss:non-renormalizable} (Theorem~\ref{t:non-renormalizable}) and in~\cite[\S6]{Riv07}.
\begin{prop}
Suppose that for a rational map~$f$ there exists a nice set $U=\bigcup_{c\in\CJ} U^c$ such that for every integer $n \ge 1$
\begin{equation}\label{e:strongnice}
f^n(\partial U) \cap \overline U = \emptyset.            
\end{equation}
Suppose furthermore that the maximal diameter of a connected component of~$f^{-k}(U)$ converges to~$0$ as $k \to + \infty$.
Then there exists a nice set~$V$ for~$f$ that is compactly contained in~$U$ such that $(U,V)$ is a nice couple for~$f$.
\end{prop}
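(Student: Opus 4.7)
The plan is to construct~$V$ by assembling one component per critical point. For each $c \in \CJ$, under the assumption that the forward orbit of~$c$ eventually re-enters~$U$, set $k_c \= \min\{k \geq 1 : f^k(c) \in U\}$, let $c' \in \CJ$ satisfy $f^{k_c}(c) \in U^{c'}$, and take~$V^c$ to be the connected component of $f^{-k_c}(U^{c'})$ containing~$c$. Then I set $V \= \bigcup_{c \in \CJ} V^c$. The niceness of~$U$ immediately gives $V^c \subset U^c$. Since along the orbit $c, f(c), \ldots, f^{k_c-1}(c)$ only~$c$ is critical (no critical point of~$f$ in~$J(f)$ maps to another, by the blanket assumption in~\S\ref{s:preliminaries}), the map $f^{k_c}\colon V^c \to U^{c'}$ is a branched cover of degree~$\deg_f(c)$ ramified only at~$c$, and Riemann--Hurwitz gives $\chi(V^c) = 1$. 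Hence~$V^c$ is simply connected and contains exactly one critical point of~$f$ in~$J(f)$, namely~$c$.

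To verify the compact containment $\overline{V^c} \subset U^c$, I would use the strong niceness hypothesis. Since $\partial V^c \subset f^{-k_c}(\partial U)$, if some $z \in \partial V^c$ lay on $\partial U^c \subset \partial U$, both~$z$ and $f^{k_c}(z)$ would be in~$\partial U$, contradicting \eqref{e:strongnice} applied with~$n = k_c$. Hence $\partial V^c \cap \partial U^c = \emptyset$ and $\overline{V^c} \subset U^c$.

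Next I would check the nice-couple condition $f^n(\partial V^c) \cap U = \emptyset$ for each $n \geq 1$. For $0 \leq n \leq k_c$, properness of $f^n|_{V^c}$ yields $f^n(\partial V^c) = \partial W_n$ where $W_n \= f^n(V^c)$ is a pull-back of~$U^{c'}$ by $f^{k_c - n}$, hence a pull-back of~$U$. By the niceness dichotomy, either $W_n \subset U$ or $W_n \cap U = \emptyset$. The minimality of $k_c$ shows that for $1 \leq n < k_c$ one has $f^n(c) \notin U$, whence $W_n \cap U = \emptyset$; since~$U$ is open this gives $\overline{W_n} \cap U = \emptyset$, so $\partial W_n \cap U = \emptyset$. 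At $n = k_c$, $W_n = U^{c'}$, and $\partial W_n \subset \partial U$ is disjoint from~$U$. Finally, for $n > k_c$, $f^n(\partial V^c) \subset f^{n - k_c}(\partial U)$ is disjoint from~$U$ by the niceness of~$U$.

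The main obstacle is the case of a critical point $c \in \CJ$ whose forward orbit never re-enters~$U$, so that no first-return time exists and the construction above produces no candidate for~$V^c$. The forward orbit of~$f(c)$ then lies in the compact, forward-invariant set $K \= \{z : f^n(z) \notin U \text{ for all } n \geq 0\}$, which is disjoint from~$\CJ$ and on which~$f$ is uniformly expanding by a Ma\~n\'e-type argument. I would try to replace the pull-back construction by a Koebe-controlled pull-back along the orbit of~$c$ from a small disk around~$f^N(c)$ for large~$N$, exploiting the hyperbolic structure of~$K$ together with the diameter hypothesis to control all forward images of~$\partial V^c$ and keep them uniformly outside~$U$. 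Arranging $f^n(\partial V^c) \cap U = \emptyset$ for all $n \geq 1$---including $n > N$, where images of the initial disk under $f^{n - N}$ could in principle grow and re-enter~$U$---is the hardest and most delicate step of the plan, and is precisely where \eqref{e:strongnice} and the diameter hypothesis must interact in an essential way.
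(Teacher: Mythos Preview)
Your treatment of the returning critical points is correct and is exactly what the paper does: take~$V^c$ to be the first-return pull-back of~$U$ through~$c$, use~\eqref{e:strongnice} to get compact containment, and use minimality of~$k_c$ together with the niceness of~$U$ to verify $f^n(\partial V^c)\cap U=\emptyset$.

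The genuine gap is in the non-returning case. Your outline is explicitly incomplete, and the approach you sketch is both harder than necessary and not obviously sound under the stated hypotheses: a Ma\~n\'e-type expansion on~$K(U)$ would require the absence of parabolic periodic points, which is not assumed here, and even with expansion you still face the problem you identify of controlling~$f^n(\partial V^c)$ for \emph{all}~$n$.

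The paper's construction for this case avoids all of this. Let~$A\=\CC\setminus f^{-1}(K(U))$, whose connected components are exactly the first-entry pull-backs of~$U$. Take any disk~$D$ compactly contained in~$U^c$ and let~$\tV^c$ be the union of~$D$ with every connected component of~$A$ meeting~$D$; the diameter hypothesis forces~$\tV^c$ to be compactly contained in~$U^c$. The key observation is that every boundary point of~$\tV^c$ lies either on~$\partial D\cap(\CC\setminus A)\subset K(U)$, whose forward orbit never enters~$U$, or on the boundary of a first-entry pull-back of~$U$, which maps into~$\partial U$ at the entry time and is disjoint from~$U$ before and after. Hence $f^n(\partial\tV^c)\cap U=\emptyset$ for all~$n\ge1$ with no dynamical control needed. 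One then fills the holes of~$\tV^c$ inside~$U^c$ to obtain a simply connected~$V^c$; since~$\partial V^c\subset\partial\tV^c$, the nice-couple condition persists. This is the missing idea: rather than pulling back a disk along the orbit of~$c$, one \emph{thickens} a disk around~$c$ by the first-entry pull-backs, so that the boundary is automatically built out of pieces whose forward orbits are under control.
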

\begin{proof}
Since~$U$ is a nice set each connected component of the set $A \= \CC \setminus f^{-1}(K(U))$ is a pull-back of~$U$.
Furthermore, by~\eqref{e:strongnice} each connected component~$W$ of~$A$ intersecting~$U$ is compactly contained in~$U$, and~$m_W$ is the first return time to~$U$ of points in~$W$.

If the forward trajectory of~$c$ visits~$U$, take as $V^c$ the connected component containing~$c$ of~$A$.
Since~$U$ is a nice set, $V^c$ is a first return pull-back of~$U$, and by~\eqref{e:strongnice} the set~$V^c$ is compactly contained in~$U$.
In particular for each integer $n \ge 1$ we have $f^n(\partial V^c) \cap U = \emptyset$.
For each critical point $c\in\CJ$ whose forward trajectory never returns to~$U$, take a preliminary disc~$D$ compactly contained in~$U^c$.
By~\eqref{e:strongnice} each connected component of~$A$ intersecting~$D$ is compactly contained in~$U^c$. 
Let now~$\tV^c$ be the union of~$D$ and all those connected components of~$A$ intersecting~$D$.
The hypothesis on diameters of pull-backs implies that~$V^c$ is compactly contained in~$U$, and that each point in~$\partial \tV^c$ is either contained in~$\partial D \cap (\CC \setminus A)$, or in the boundary of a connected component of~$A$ intersecting~$D$ (which is a first return pull-back of~$U$).
Therefore for each integer~$n \ge 1$ we have $f^n(\partial \tV^c) \cap U = \emptyset$.
Finally let~$V^c$ be the union of~$\tV^c$ and all connected components of $\CC \setminus \tV^c$ contained in~$U^c$ (We do this "filling holes" trick since \emph{a priori} it could happen that the union of~$D$ and one of the connected components of~$A$, and consequently~$\tV^c$, might not be simply-connected).
We have~$\partial V^c \subset \partial \tV^c$, so for each integer~$n \ge 1$ we have $f^n(\partial V^c) \cap U = \emptyset$.

Set $V=\bigcup_{c\in\CJ} V^c$.
We have shown that for each integer~$n \ge 1$ we have $f^n(\partial V) \cap U = \emptyset$, so $(U,V)$ is a nice couple.
\end{proof}
\subsection{Canonical induced map}\label{ss:canonical induced map}
Let $(\hV, V)$ be a pleasant couple for~$f$.
We say that an integer $m \ge 1$ is a \emph{good time for a point $z$} in $\CC$, if $f^m(z) \in V$ and if the pull-back of $\hV$ by $f^m$ to $z$ is univalent.
Let $D$ be the set of all those points in $V$ having a good time
and for $z \in D$ denote by $m(z) \ge 1$ the least good time of
$z$. Then the map $F : D \to V$ defined by $F(z) \= f^{m(z)}(z)$
is called {\it the canonical induced map associated to $(\hV,
V)$}.
We denote by $J(F)$ the maximal invariant set of~$F$ and by~$\fD$ the collection of connected components of~$D$.

As~$V$ is a nice set, it follows that each connected component~$W$ of~$D$ is a pull-back of~$V$ and that for each~$z \in W$ we have~$m(z) = m_W$.
Moreover,~$f^{m_W}$ is univalent on~$\hW$ and by property~\puno{} of pleasant couples we have $\hW \subset \hV$.
Similarly, for each integer~$n \ge 1$, each connected component~$W$ of the
domain of definition of~$F^n$ is a pull-back of~$V$ and~$f^{m_W}$
is univalent on~$\hW$.
Conversely, if~$W$ is a pull-back of~$V$ strictly contained in~$V$ such that~$f^{m_W}$ is univalent on~$\hW$,
then there is $c \in \CJ$ and an integer~$n \ge 1$ such that~$F^n$ is defined on~$W$ and $F^n(W) = V^c$.
Indeed, in this case~$m_W$ is a good time for each element of~$W$ and therefore $W \subset D$.
Thus, either we have $F(W) = V^{c(W)}$ and then~$W$ is a connected component of $D$, or~$F(W)$ is a pull-back of~$V$ strictly contained in~$V$ such that~$f^{m_{F(W)}}$ is univalent on~$\widehat{F(W)}$.
Thus, repeating this argument we can show by induction that there is an integer~$n \ge 1$ such that~$F^n$ is defined on~$W$ and $F^n(W) = V^{c(W)}$.

The following result was shown in~\cite{PrzRiv07} for nice couples.
The proof applies without change to pleasant couples.
\begin{lemm}[\cite{PrzRiv07}, Lemma~4.1]\label{l:mixingness}
For every rational map~$f$ there is $r > 0$ such that if $(\hV, V)$ is a pleasant couple satisfying
\begin{equation}\label{e:diam bound}
\max_{c \in \CJ} \diam\left( \hV^c \right) \le r,
\end{equation}
then the canonical induced map $F : D \to V$ associated to $(\hV, V)$ is topologically mixing on $J(F)$.
Moreover there is $\widetilde{c} \in \CJ$ such that the set
\begin{equation}\label{e:return times}
\left\{ m_W \mid W \in \fD \text{ contained in $V^{\widetilde{c}}$ and such that $F(W) = V^{\widetilde{c}}$} \right\}
\end{equation}
is non\nobreakdash-empty and its greatest common divisor is equal to~$1$.
\end{lemm}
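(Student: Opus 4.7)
The plan rests on two ingredients: property \pdos{} of pleasant couples, which automatically upgrades first-return pull-backs into components of $\fD$, and topological exactness of $f|_{J(f)}$, which provides ample pull-backs. I would first establish a \emph{first-return criterion}: if $W\subset V$ is a pull-back of $V$ with $f^j(W)\cap V=\emptyset$ for $0<j<m_W$, then each $f^j(W)$ is a pull-back of $V$ disjoint from $V$, so by property \pdos{} the corresponding $\widehat{f^j(W)}$ avoids $\Crit(f)$. Hence $f^{m_W}$ is univalent on $\hW$, so $m_W$ is a good time for~$W$; niceness of $V$ forces it to be the \emph{least} good time, whence $W\in\fD$ and $F(W)=V^{c(W)}$. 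Thus the components of $\fD$ that $F$ maps onto $V^c$ are exactly the first-return pull-backs of $V$ landing on $V^c$.

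Next, choosing $r>0$ small enough that $J(f)\not\subset\hV$ whenever $\max_c\diam\hV^c\le r$, topological exactness of $f|_{J(f)}$ gives, for each $c\in\CJ$, an integer $N_c$ with $f^{N_c}(V^c\cap J(f))\supset J(f)$. For any pair $c,c'\in\CJ$ this, combined with the first-return criterion, yields a first-return pull-back $W\subset V^c$ of $V^{c'}$; hence the transition graph of $F$ between the components $V^c$ is strongly connected. Topological transitivity of $F$ on $J(F)$ then follows by the standard pull-back argument, using that open sets meeting $J(F)$ contain cylinders of $F$.

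The main obstacle is the gcd condition. Fix any $\tilde c\in\CJ$ and set
\[
R \;=\; \{\,m_W \mid W\in\fD,\ W\subset V^{\tilde c},\ F(W)=V^{\tilde c}\,\}.
\]
Let $R^*$ denote the set of total $f$-lengths of $F$-loops from $V^{\tilde c}$ back to itself; this set is closed under addition (loop concatenation), and since every element of $R^*$ is a sum of elements of $R$ one checks that $\gcd(R^*)=\gcd(R)$. To force this common gcd to equal~$1$, I would exhibit two repelling periodic points $p,p'\in V^{\tilde c}\cap J(f)$ of coprime $f$-periods $n$ and $n'$ whose $f$-orbits meet $V$ only at $p$ and $p'$ respectively; by the first-return criterion these give $n,n'\in R$, forcing $\gcd(R)=1$. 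The existence of such periodic points of all sufficiently large prescribed periods follows from topological exactness, the density of repelling periodic orbits in $J(f)$, and a specification-type argument for rational maps; the delicate point is ensuring their orbits avoid $V$ outside the single return instant, which is precisely where the smallness of $r$ enters. Once strong connectedness and $\gcd(R)=1$ are in hand, topological mixing of $F$ on $J(F)$ follows by the standard synthesis for topologically transitive Markov systems with aperiodic return times.
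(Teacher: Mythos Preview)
The paper does not prove this lemma; it simply quotes \cite[Lemma~4.1]{PrzRiv07} and remarks that the same proof goes through for pleasant couples. So there is no argument in the present paper to compare your sketch against.

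Your outline is largely sound. The first-return criterion is correct and is the right mechanism for manufacturing elements of $\fD$; note however that your ``exactly'' is too strong, since a component $W\in\fD$ may well have an earlier visit $f^j(W)\subset V$ that fails to be a good time because the corresponding pull-back of $\hV$ hits a critical point. You only use the forward implication (first return $\Rightarrow$ element of $\fD$), so this does no damage. Strong connectedness via topological exactness is also fine, and together with the existence of even one self-loop $V^{\tilde c}\to V^{\tilde c}$ it already gives topological mixing of $F$ on $J(F)$; the $\gcd$ statement about the $f$-times $m_W$ is a separate assertion and does not follow from mixing of $F$.

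The genuine gap is the $\gcd$ step. First, your aside that ``every element of $R^*$ is a sum of elements of $R$'' is false: an $F$-loop based at $V^{\tilde c}$ may pass through other components $V^{c'}$, and the intermediate $m_{W_i}$ need not lie in $R$. This is harmless since you then try to put coprime integers directly into $R$, but the substantive issue is that your ``specification-type argument'' is precisely the point that needs proof. Producing repelling periodic points in $V^{\tilde c}$ of all sufficiently large periods is easy; producing ones whose \emph{entire orbit} meets $V$ only at the base point is not, and this is exactly what the smallness of $r$ must buy. The standard device (presumably that of \cite{PrzRiv07}) is to fix a repelling fixed point $p_0\in J(f)$, take $r$ small enough that a linearizing disk $B_0\ni p_0$ is disjoint from $B(\CJ,r)\supset\hV$, and construct first-return pull-backs $W_k\subset V^{\tilde c}$ of $V^{\tilde c}$ by composing an ``out'' leg $V^{\tilde c}\to B_0$, $k$ iterations of the contracting local inverse $\phi_0:B_0\to B_0$, and an ``in'' leg $B_0\to V^{\tilde c}$. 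This yields $m_{W_k}=N+k$ for all $k\ge 0$, hence consecutive integers in $R$. The work is in arranging the two legs to avoid $V$ at intermediate times and in choosing $\tilde c$ compatibly; your sketch asserts this can be done but does not indicate how.
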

\begin{rema}\label{r:using symbolic}
We will apply several results of~\cite{MauUrb03} to the induced map~$F$.
However, most of the results we need from~\cite{MauUrb03} are stated for the associated symbolic space.
The corresponding results for the induced map~$F$ can be obtained using Lemma~3.1.3, Proposition~3.1.4 and Theorem~4.4.1 of~\cite{MauUrb03}.
\end{rema}

\subsection{Bad pull-backs}\label{ss:bad pull-backs}
We will now introduce the concept of ``bad pull-backs'' of a pleasant couple.
It is an adaptation of the concept with the same name introduced in~\cite[\S7.1]{PrzRiv07} for a nice set.

Given a pleasant couple~$(\hV, V)$ and an integer~$n \ge 1$, a point~$y \in f^{-n}(V)$ is a \emph{bad iterated pre-image of order~$n$} if for every~$j \in \{ 1, \ldots, n \}$ such that~$f^j(y) \in V$ the map~$f^j$ is not univalent on the pull-back of~$\hV$ by~$f^j$ containing~$y$.
In this case every point~$y'$ in the pull-back~$X$ of~$V$ by~$f^n$ containing~$y$ is a bad iterated pre-image of order~$n$.
We could call~$X$ a bad pull-back of~$V$ of order~$n$, although this terminology would not be used in the rest of the paper, see Figure~\ref{f:bad} below.
Furthermore, a connected component~$\badp$ of~$f^{-n}(\hV)$ is a \emph{bad pull-back of~$\hV$ of order~$n$}, if it contains a bad iterated pre-image of order~$n$.\footnote{When the pleasant couple~$(\hV, V)$ is nice, it is easy to see that the notion of bad pull-back as defined here coincides with that of~\cite{PrzRiv07}.
See also Remark~\ref{r:bad pull-backs}.}

The following two lemmas will be used in the proof of the Main Theorem in~\S\ref{s:proof of nice thermodynamics}.
They are adaptations to pleasant couples of part~1 of Lemma~7.4 and of Lemma~7.1 of~\cite{PrzRiv07}.
Given a pleasant couple~$(\hV, V)$ for~$f$ we denote by~$\fL_V$ the collection of all the connected components of~$\CC \setminus K(V)$.
On the other hand, let~$\badp$ be a pull-back of~$\hV$ and recall that~$m_{\badp} \ge 0$ denotes the integer such that~$f^{m_{\badp}}(\badp)$ is equal to a connected component of~$\hV$.
Then we let~$\fD_{\badp}$ be the collection of all the pull-backs~$W$ of~$V$ that are contained in~$\badp$, such that~$f^{m_W}$ maps the pull-back~$\hW$ of~$\hV$ by~$f^{m_W}$ containing~$W$ univalently onto~$\hV^{c(W)}$, such that~$f^{m_{\badp}}(W) \subset V$ and such that $f^{m_{\badp} + 1}(W) \in \fL_V$.
See~Figure~\ref{f:bad}.
\begin{figure}[hbt]
\begin{center}
\input{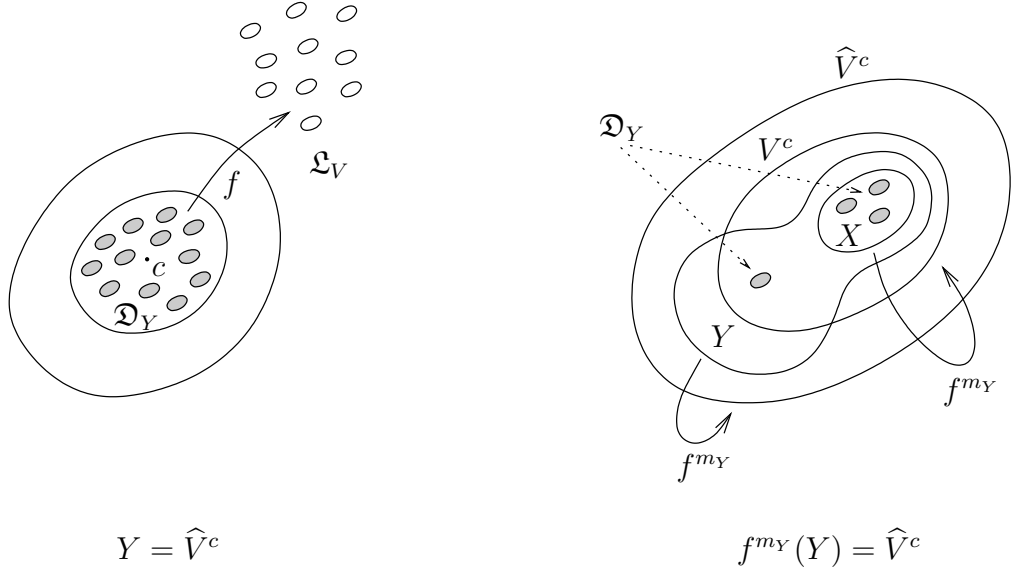}
\end{center}
\caption{On the left, the family associated to a connected component~$\badp$ of~$\hV$; on the right, the one associated to a higher order pull-back~$\badp$ of~$\hV$.}\label{f:bad}
\end{figure}
\begin{lemm}[\cite{PrzRiv07}, part~1 of Lemma~$7.4$]\label{l:decomposition into bad}
Let~$(\hV, V)$ be a pleasant couple for~$f$ and let~$\fD$ be the collection of the connected components of~$D$.
Then
$$ \fD
\subset
\left( \bigcup_{c \in \CJ} \fD_{\hV^c} \right)
\cup
\left( \bigcup_{\badp \text{ bad pull-back of~$\hV$}} \fD_{\badp} \right). $$
\end{lemm}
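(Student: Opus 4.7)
The plan is to prove the inclusion by, for each $W \in \fD$, locating the \emph{last} return of the forward orbit of $W$ to $V$ strictly before time $m_W$, and using the pull-back of $\hV$ at that time as the index on the right-hand side.

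Concretely, since $W \subset D \subset V$, there is a unique $c_0 \in \CJ$ with $W \subset V^{c_0}$. Niceness of $V$ ensures that for each $0 \le j \le m_W$ the connected set $f^j(W)$ is either contained in $V$ or disjoint from it, so I may enumerate the return times $0 = t_0 < t_1 < \cdots < t_\ell = m_W$ at which $f^{t_i}(W) \subset V$. Let $\badp$ denote the pull-back of $\hV$ by $f^{t_{\ell - 1}}$ containing $W$; in the case $\ell = 1$ this is simply $\hV^{c_0}$ with $m_\badp = 0$. By the minimality of $m_W$ as a good time, for each $1 \le i \le \ell - 1$ the time $t_i \in (0, m_W)$ cannot be a good time while $f^{t_i}(W) \subset V$, so the pull-back of $\hV$ by $f^{t_i}$ containing $W$ fails to be univalent. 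Applied at all such $i$ simultaneously, this shows that in the case $\ell \ge 2$ every point of $W$ is a bad iterated pre-image of order $t_{\ell - 1}$, identifying $\badp$ as a bad pull-back of $\hV$.

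With this choice of $\badp$, I would then verify the four defining conditions of $\fD_\badp$ for $W$. The containment $W \subset \badp$ is by construction, univalence of $f^{m_W}$ on $\hW$ is part of the hypothesis $W \in \fD$, and $f^{m_\badp}(W) = f^{t_{\ell - 1}}(W) \subset V$ holds by the choice of $t_{\ell - 1}$. The only nontrivial condition is $f^{t_{\ell - 1} + 1}(W) \in \fL_V$. The case $t_{\ell - 1} + 1 = m_W$ is immediate since $f^{m_W}(W) = V^{c(W)} \in \fL_V$. Otherwise, the absence of intermediate returns between $t_{\ell - 1}$ and $t_\ell = m_W$ shows that the first return time to $V$ of every point of $f^{t_{\ell - 1} + 1}(W)$ equals $m_W - t_{\ell - 1} - 1$. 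Letting $U$ be the component of $\CC \setminus K(V)$ containing $f^{t_{\ell - 1} + 1}(W)$, the characterization of components of $\CC \setminus K(V)$ recalled in \S\ref{ss:nice sets and couples} as first-return pull-backs of $V$ forces $m_U = m_W - t_{\ell - 1} - 1$; hence $U$ and $f^{t_{\ell - 1} + 1}(W)$ are both connected components of $f^{-(m_W - t_{\ell - 1} - 1)}(V)$ with one contained in the other, so they coincide.

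The main obstacle I anticipate is this last step, where one needs to show that $f^{t_{\ell - 1} + 1}(W)$ is already an \emph{entire} connected component of $\CC \setminus K(V)$, rather than merely being contained in one; once this is in place the remaining verifications reduce to unwinding the definitions together with the minimality of $m_W$ and the niceness of $V$.
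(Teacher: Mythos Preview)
Your proof is correct and follows essentially the same approach as the paper's: both locate the largest $n < m_W$ with $f^n(W) \subset V$ (your $t_{\ell-1}$), take $\badp$ to be the pull-back of $\hV$ by $f^n$ containing $W$, and use minimality of $m_W$ as a good time to see that $\badp$ is a bad pull-back. The only difference is that the paper simply asserts $f^{n+1}(W) \in \fL_V$, whereas you spell out the argument that $f^{t_{\ell-1}+1}(W)$ coincides with the component of $\CC \setminus K(V)$ containing it; your verification there is sound.
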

\begin{proof}
Let~$W \in \fD$.
If~$f(W) \in \fL_V$, then there is~$c \in \CJ$ such that~$W \in \fD_{\hV^c}$.
Suppose now~$f(W) \not \in \fL_V$, so there is an integer~$j \in \{1, \ldots, m_W - 1 \}$ such that $f^j(W) \subset V$.
Let~$n$ be the largest such integer, so we have~$f^{n + 1}(W) \in \fL_V$ and, if we let~$\badp$ be the pull-back of~$\hV$ by~$f^n$ containing~$W$, then~$W \in \fD_{\badp}$.
It remains to show that~$\badp$ is a bad pull-back of~$\hV$ of order~$n$.
Just observe that if we fix~$y \in W \subset \badp$, then~$m_W$ is the least good time of~$y$, so~$y$ is a bad iterated pre-image of~$f^n(y) \in f^n(W) \subset V$ of order~$n$ and~$\badp$ is a bad pull-back of~$\hV$ of order~$n$.
This completes the proof of the lemma.
\end{proof}
\begin{lemm}[\cite{PrzRiv07}, Lemma~7.1]\label{l:bad pull-backs}
Let~$f$ be a rational map, let~$(\hV, V)$ be a pleasant couple for~$f$ and let $L \ge 1$ be such that for every~$\ell \in \{ 1, \ldots, L \}$ the set~$f^\ell(\CJ)$ is disjoint from~$\hV$.
Then for each~$z_0 \in V$ and each integer $n \ge 1$, the number of bad iterated pre-images of~$z_0$ of order~$n$ is at most
$$(2 L \deg(f) \#(\CJ))^{n/L}. $$
In particular, the number of bad pull-backs of~$\hV$ of order~$n$ is at most
$$ \#(\CJ) (2 L \deg(f) \#(\CJ))^{n/L}. $$
\end{lemm}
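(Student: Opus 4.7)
The plan is to assign to each bad iterated pre-image $y$ of $z_0$ of order $n$ an injective ``signature'' valued in an alphabet of bounded cardinality, and then to count signatures. The critical structural input, which I would derive first, is a quantitative disjointness principle: if $y$ is bad and $c \in \CJ$ lies in the pull-back of $\hV$ by $f^{j-i}$ containing $f^i(y)$ for some return time $j \in \{1, \ldots, n\}$ with $f^j(y) \in V$ and some $i \in \{0, \ldots, j-1\}$, then the hypothesis $f^\ell(\CJ) \cap \hV = \emptyset$ for $\ell \in \{1, \ldots, L\}$ forces $j - i \ge L + 1$. Indeed $f^{j-i}(c)$ lands in the component of~$\hV$ containing $f^j(y)$, which is incompatible with $j - i \le L$. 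Specializing to the smallest return time $j_1$, this already yields $j_1 \ge L + 1$, so no return to~$V$ can occur during the first $L$ iterations.

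I would then partition the time window $\{1, \ldots, n\}$ into consecutive blocks of length~$L$ (the last one possibly shorter) and assign to $y$ one symbol per block, drawn from an alphabet of cardinality at most $2 L \deg(f) \#(\CJ)$. Each symbol records either ``no critical event in this block'' or a triple (critical point in $\CJ$, offset within the block, branch of $f$ at that critical point). The factor $\#(\CJ)$ counts critical points, $L$ the offsets in a block, $\deg(f)$ the local branches, and $2$ is the binary event/no-event flag. The reconstruction of $y$ from $z_0$ and the signature proceeds backwards block by block: in a no-event block the relevant segment of the pull-back chain is univalent, so the preimage propagates uniquely once the connected component is fixed; in an event block the triple pins down the critical branch, again determining the preimage uniquely.

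Counting signatures of length $\lceil n/L \rceil$ over an alphabet of cardinality $2 L \deg(f) \#(\CJ)$ yields the claimed bound $(2 L \deg(f) \#(\CJ))^{n/L}$ for a given base point $z_0 \in V$. For the second part of the lemma, each bad pull-back of~$\hV$ of order~$n$ is mapped by $f^n$ onto one of the $\#(\CJ)$ connected components of~$\hV$ and contains at least one bad iterated pre-image of any chosen base point of that component; summing the previous estimate over components then gives the factor $\#(\CJ) (2 L \deg(f) \#(\CJ))^{n/L}$.

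The main obstacle will be rigorous bookkeeping of the signature, since a single block may contain several critical events of the pull-back chain, the connected-component choices in no-event blocks need disambiguating, and critical points of~$f$ not belonging to~$\CJ$ but possibly lying in~$\hV$ must be dealt with separately. I expect the cleanest implementation to be inductive: peel off the latest critical event, reducing~$n$ by at least $L+1$ at the cost of a multiplicative factor bounded by $L \deg(f) \#(\CJ)$ times an absorbing constant, which reproduces the same leading-order bound.
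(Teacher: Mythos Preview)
Your proposal is essentially correct and matches the paper's approach. The ``inductive'' implementation you suggest in your final paragraph is exactly what the paper does: it defines recursively a decreasing sequence $\ell_0 = n > \ell_1 > \cdots > \ell_k = 0$ by taking $\ell_{j+1}$ to be the largest time at which the pull-back of~$\hV$ containing $f^{\ell_{j+1}}(y)$ meets~$\CJ$, observes (as you do) that $\ell_{j-1} - \ell_j \ge L$ via the hypothesis $f^\ell(\CJ) \cap \hV = \emptyset$, and then counts the at most $(L+1)^{n/L}$ possible sequences and the at most $(\deg(f)\,\#(\CJ))^k$ preimages per sequence separately.

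Two of the obstacles you list are not actually present. First, since consecutive $\ell_j$ are at least $L+1$ apart, each length-$L$ block contains at most one critical event, so your block encoding would work without multiplicity issues. Second, critical points of~$f$ outside~$\CJ$ are handled automatically by property~\pdos{} of pleasant couples: if the pull-back of~$V$ is disjoint from~$V$ then the corresponding pull-back of~$\hV$ is disjoint from \emph{all} of $\Crit(f)$, so every critical obstruction forces a return to~$V$ and hence involves a critical point in~$\CJ$. Your argument for the second assertion (choose one base point $z_0(c)$ per component $V^c$; each bad pull-back of~$\hV$ contains a bad pull-back of~$V$, hence a bad iterated pre-image of $z_0(c)$) is also what the paper has in mind, though the paper leaves it implicit.
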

\begin{proof}
\

\partn{1}
Given an integer~$n \ge 1$ and a bad iterated pre-image~$y$ of order~$n$, let~$\ell(y, n) \in \{ 0, \ldots, n - 1 \}$ be the largest integer such that the pull-back of~$\hV$ by~$f^{n - \ell(y, n)}$ containing~$f^{\ell(y, n)}(y)$ intersects~$\CJ$.
Using property~\pdos{} of pleasant couples we obtain that~$f^{\ell(y, n)}(y) \in V$, and in the case where~$\ell(y, n) > 0$, that the point~$y$ is a bad iterated pre-image of order~$\ell(y, n)$.

\partn{2}
Given an integer~$n \ge 1$ and a bad iterated preimage~$y$ of order~$n$, define~$k \ge 1$ and a strictly decreasing sequence of non-negative integers~$(\ell_0, \ldots, \ell_k)$ by induction as follows.
We put~$\ell_0 = n$ and suppose that for some integer~$j \ge 0$ the integer~$\ell_j$ is already defined in such a way that~$f^{\ell_j}(y) \in V$.
If~$\ell_j = 0$ then define~$k \= j$ and stop.
Otherwise we have~$f^{\ell_j}(y) \in V$ by the induction hypothesis and therefore~$y$ is a bad iterated pre-image of~$f^{\ell_j}(y)$ of order~$\ell_j$.
Then we define~$\ell_{j + 1} \= \ell(y, \ell_j)$.
As remarked above~$f^{\ell_{j + 1}}(y) = f^{\ell(y, \ell_j)}(y) \in V$, so the induction hypothesis is satisfied.

\partn{3}
Fix an integer~$n \ge 1$.
To each bad iterated preimage~$y$ of order~$n$ we have associated in part~2 an integer~$k \ge 1$ and a strictly decreasing sequence of non-negative integers~$(\ell_0, \ldots, \ell_k)$.
We have~$\ell_0 = n$, $\ell_k = 0$ and for each~$j \in \{ 1, \ldots, k \}$ the pull-back of~$\hV$ by~$f^{\ell_{j - 1} - \ell_j}$ containing~$f^{\ell_j}(y)$ contains an element~$c$ of~$\CJ$ and at most~$\deg_f(c)$ elements of~$f^{-(\ell_{j - 1} - \ell_j)}(f^{\ell_j}(y))$. 
As for each~$c \in \CJ$ and each integer~$m \ge 1$ there are at most~$\#(\CJ)$ connected components of~$f^{-m}(\hV^c)$ intersecting~$\CJ$, it follows that there are at most
$$ (\deg(f)\#(\CJ))^{k} $$
bad iterated pre-images of~$z_0$ of order~$n$ whose associated sequence is equal to~$(\ell_0, \ldots, \ell_k)$.

By definition of~$L$ for each~$j \in \{ 1, \ldots, k \}$ we have~$\ell_{j - 1} - \ell_j \ge L$.
So~$k \le n / L$ and for each integer~$m \in \{ 1, \ldots, n \}$ there is at most one integer~$r \in \{ 0, \ldots, L - 1 \}$ such that~$m + r$ is one of the~$\ell_j$.
Thus there are at most~$(L + 1)^{n / L}$ such decreasing sequences.

We conclude that the number of bad iterated pre-images of~$z_0$ of order~$n$ is at most,
$$ (L + 1)^{n / L}(\deg(f)\#(\CJ))^{n/L}
\le (2L \deg(f) \#(\CJ))^{n/L}. $$
\end{proof}
\begin{rema}
\label{r:bad pull-backs}
The purpose of this remark is to show the reverse inclusion of Lemma~\ref{l:decomposition into bad}.
Although this is not used in the proof of the Main Theorem, we think the argument is useful to understand bad pull-backs of pleasant couples.
As for each~$c \in \CJ$ we clearly have~$\fD_{\hV^c} \subset \fD$, we just need to show that for each integer~$n \ge 1$ and each bad pull-back~$\badp$ of~$\hV$ by~$f^n$ we have~$\fD_{\badp} \subset \fD$.
To do this, let~$y \in \badp$ be a bad iterated pre-image of order~$n$ and let~$k \ge 1$ and~$(\ell_0, \ldots, \ell_k)$ be as in the proof of Lemma~\ref{l:bad pull-backs}.
An inductive argument using property~\pdos{} of pleasant couples shows that for each~$j \in \{0, \ldots, k - 1 \}$ the set~$f^{\ell_j}(\badp) \cap f^{-(n - \ell_j)}(V)$ is contained in~$V$.
In particular, each element~$W$ of~$\fD_{\badp}$ is contained in~$V$.
On the other hand, observe that the pull-back of~$\hV$ by~$f^{m_W - n}$ containing~$f^n(W)$ is univalent and by property~\puno{} of pleasant couples it is contained in~$\hV$.
Thus, to show~$W \in \fD$, we just need to show that each element~$y'$ of~$W$ is a bad iterated pre-image of order~$n$.
Let~$i \in \{ 1, \ldots, n \}$ be such that~$f^i(y') \in V$ and let~$j \in \{ 0, \ldots, k - 1 \}$ be the largest integer such that~$\ell_j \ge i$.
By property~\puno{} of pleasant couples it follows that the pull-back of~$\hV$ by~$f^{\ell_{j} - i}$ containing~$f^i(y')$ is contained in~$\hV$.
Since the pull-back of~$\hV$ by~$f^{\ell_{j} - \ell_{j + 1}}$ containing~$f^{\ell_{j + 1}}(y')$ intersects~$\Crit(f)$, it follows the pull-back of~$\hV$ by~$f^{i - \ell_{j + 1}}$ containing~$f^{\ell_{j + 1}}(y')$ intersects~$\Crit(f)$ and hence that that~$f^i$ is not univalent on the pull-back of~$\hV$ by~$f^i$ containing~$y$.
This completes the proof that~$y'$ is a bad iterated pre-image of order~$n$ and that~$W \in \fD$.
\end{rema}
\subsection{Pressure function of the canonical induced map}\label{ss:two variable pressure}
Let $(\hV, V)$ be a pleasant couple for~$f$ and let $F : D \to V$ be the canonical induced map associated to $(\hV, V)$.
Furthermore, denote by $\fD$ the collection of connected components of~$D$ and
for each $c \in \CJ$ denote by $\fD^c$ the collection of all
elements of $\fD$ contained in $V^c$, so that $\fD = \bigsqcup_{c
\in \CJ} \fD^c$. A word on the alphabet $\fD$ will be called
\emph{admissible} if for every pair of consecutive letters $W,
W' \in \fD$ we have $W \in \fD^{c(W')}$. For a given integer $n
\ge 1$ we denote by~$E^n$ the collection of all admissible words
of length~$n$. Given $W \in \fD$, denote by $\phi_W$ the
holomorphic extension to $\hV^{c(W)}$ of the inverse of $F|_W$.
For a finite word $\underline{W} = W_1 \ldots W_n \in E^*$ put
$c(\underline{W}) \= c(W_n)$ and $m_{\underline{W}} = m_{W_1} +
\cdots + m_{W_n}$. Note that the composition
$$
\phi_{\underline{W}} \= \phi_{W_1} \circ \cdots \circ \phi_{W_n}
$$
is well defined and univalent on $\hV^{c(\uW)}$ and takes images
in $V$.

For each $t, p \in \R$ and $n \ge 1$ put
$$
Z_n(t, p) \= \sum_{\underline{W} \in E^n} \exp(- m_{\underline{W}}
p)\left( \sup \left\{ |\phi_{\underline{W}}'(z)| \mid z \in
V^{c(\uW)} \right\} \right)^t.
$$
It is easy to see that for a fixed $t, p \in \R$ the sequence $( \ln Z_n(t, p) )_{n \ge 1}$ is sub-additive, and hence
that we have
\begin{equation}\label{e:induced pressure}
P(F, - t \ln |F'| - p m) := \lim_{n \to + \infty} \tfrac{1}{n} \ln
Z_n(t, p) = \inf \left\{ \tfrac{1}{n} \ln Z_n(t, p) \mid  n \ge 1
\right\},
\end{equation}
see for example Lemma~2.1.1 and Lemma~2.1.2 of~\cite{MauUrb03}.
Here~$m$ is the function defined in~\S\ref{ss:canonical induced map}, that to each point $z \in D$ it associates the
least good time of~$z$.
The number~\eqref{e:induced pressure} is called the \emph{pressure function of~$F$ for the potential $- \ln |F'| - pm$}.
It is easy to see that for every $t, p \in \R$ the sequence $( \tfrac{1}{n} \ln Z_n(t, p))_{n \ge 1}$ is
uniformly bounded from below, so that~\eqref{e:induced pressure}
does not take the value~$-\infty$. Note however that if~$D$ has
infinitely many connected components and we take~$t = 0$ and $p = 0$, then we have $P(F, 0) = + \infty$.

When applying the results of~\cite{MauUrb03} to the induced map~$F$ we will use the fact that the function~$-\ln |F'|$ defines a H{\"o}lder function on the associated symbolic space and hence that for each~$(t, p) \in \R^2$ the same holds for the function~$- t \ln |F'| - pm$.

The following property will be important to use the results of~\cite{MauUrb03}.
\begin{enumerate}
\item[(*)]
There is a constant~$C_M > 0$ such that for every~$\kappa \in (0, 1)$ and every ball~$B$ of~$\CC$, the following property holds.
Every collection of pairwise disjoint sets of the form~$D_{\underline{W}}$, with $\underline{W} \in E^*$, intersecting~$B$ and with diameter at least $\kappa \cdot \diam(B)$, has cardinality at most~$C_M \kappa^{-2}$.
\end{enumerate}
In fact,~$F$ determines a Conformal Graph Directed Markov System (CGDMS) in the sense of~\cite{MauUrb03}, except maybe for the ``cone property''~(4d).
But in~\cite{MauUrb03} the cone property is only used in~\cite[Lemma~4.2.6]{MauUrb03} to prove~(*).
Thus, when property~(*) is satisfied all the results of~\cite{MauUrb03} apply to~$F$.
In \cite[Proposition~A.2]{PrzRiv07} we have shown that property~(*) holds when the pleasant couple $(\hV, V)$ is nice.

The function,
\begin{center}
\begin{tabular}{crcl}
$\pressure$ : & $\R^2$ & $\to$ & $\R \cup \{ + \infty \}$  \\
& $(t, p)$ & $\mapsto$ & $P(F, - t \ln |F'| - p m)$,
\end{tabular}
\end{center}
will be important in what follows.
Notice that if~$\pressure$ is finite at $(t_0, p_0) \in \R^2$, then by Proposition~2.1.9 the function $- t_0 \ln |F'| - p_0 m$ defines a summable H{\"o}lder potential on the symbolic space associated to the induced map~$F$.
Furthermore, it follows that~$\pressure$ is finite on the set
$$ \left\{ (t, p) \in \R^2 \mid t \ge t_0, p \ge p_0 \right\} $$
and, restricted to the set where it is finite, the function~$\pressure$ it is strictly decreasing on each of its variables.
\begin{lemm}\label{l:pressure}
Let~$f$ be a rational map of degree at least two and let $(\hV, V)$ be a
pleasant couple for~$f$ satisfying property~(*).
Then the function~$\pressure$ defined above satisfies the following properties.
\begin{enumerate}
\item[1.]
The function~$\pressure$ is real analytic on the interior of the set where it is finite.
\item[2.]
The function~$\pressure$ is strictly negative on $\{ (t, p) \in \R^2 \mid p > P(t) \}$.
\end{enumerate}
\end{lemm}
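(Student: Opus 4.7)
The plan is to handle the two parts separately. Part~1 is essentially a direct appeal to the general theory of Conformal Graph Directed Markov Systems developed in~\cite{MauUrb03}, while Part~2 requires a concrete computation based on the conformal measures furnished by Proposition~\ref{p:conformal measures}.

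For Part~1, property~(*) ensures that the canonical induced map $F$ fits into the Mauldin--Urbanski CGDMS framework, so all their results apply to $F$. The two-parameter family $(t,p) \mapsto -t \ln |F'| - pm$ defines a real-analytic family of H\"older-continuous potentials on the associated symbolic space, and, as noted just before the statement, each such potential is summable at any $(t_0,p_0)$ where $\pressure(t_0,p_0)$ is finite. I would therefore invoke the Mauldin--Urbanski real-analyticity theorem for the pressure of a summable H\"older family (translated back to $F$ via Remark~\ref{r:using symbolic}) to conclude analyticity on the interior of the finiteness region.

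For Part~2, fix $(t_0,p_0)$ with $p_0 > P(t_0)$, and first suppose $t_0 \in (\tneg,+\infty)$. By Proposition~\ref{p:conformal measures} there exists a $(t_0,P(t_0))$-conformal measure $\mu$ for~$f$, with $\supp \mu = J(f)$. Applying the conformal relation iteratively along an admissible word $\uW \in E^n$ gives the basic identity
$$
\int_{V^{c(\uW)}} |\phi_{\uW}'|^{t_0} \, d\mu
=
\exp\bigl(m_{\uW} P(t_0)\bigr) \cdot \mu\bigl(\phi_{\uW}(V^{c(\uW)})\bigr).
$$
Because $(\hV,V)$ is a pleasant couple, $V^c$ is compactly contained in $\hV^c$ and $\phi_{\uW}$ extends univalently to $\hV^{c(\uW)}$, so Koebe's distortion theorem produces a constant $K_0$, uniform in $\uW$, such that $\sup_{V^{c(\uW)}} |\phi_{\uW}'|$ and $\inf_{V^{c(\uW)}} |\phi_{\uW}'|$ differ by at most the factor $K_0$. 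Setting $\varepsilon_0 \= \min_{c \in \CJ} \mu(V^c) > 0$, this yields
$$
\sup_{V^{c(\uW)}} |\phi_{\uW}'|^{t_0}
\le
\varepsilon_0^{-1} K_0^{|t_0|} \exp\bigl(m_{\uW} P(t_0)\bigr) \, \mu\bigl(\phi_{\uW}(V^{c(\uW)})\bigr).
$$

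The conclusion then rests on the combinatorial observation that for distinct $\uW \in E^n$ the cylinders $\phi_{\uW}(V^{c(\uW)})$ have pairwise disjoint interiors, hence their $\mu$-masses sum to at most $\mu(V) \le 1$. Since $m_{\uW} \ge n$ for every $\uW \in E^n$, summing the previous inequality gives $Z_n(t_0,p_0) \le \varepsilon_0^{-1} K_0^{|t_0|} \exp\bigl(-n(p_0 - P(t_0))\bigr)$, and passing to $\tfrac{1}{n} \ln$ produces $\pressure(t_0,p_0) \le -(p_0 - P(t_0)) < 0$. I expect the main obstacle to be purely bookkeeping: extracting a Koebe constant that is genuinely uniform in $\uW$, and reading off cylinder disjointness cleanly from the pleasant-couple structure. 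The boundary case $t_0 \le \tneg$, where no conformal measure is directly available, can be handled by approximation from the right or by a variational/Abramov-formula argument, but plays no role in the paper's main applications.
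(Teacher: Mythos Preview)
Part~1 matches the paper exactly: both invoke~\cite[Theorem~2.6.12]{MauUrb03} once summability is secured by~\cite[Proposition~2.1.9]{MauUrb03}.

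For Part~2 with $t_0 > \tneg$ your argument is correct and even yields the sharper quantitative bound $\pressure(t_0,p_0) \le -(p_0 - P(t_0))$, but the paper takes a different and more direct route. Rather than invoking a conformal measure, the paper picks $z_0 \in V$ at which the tree-pressure identity~\eqref{e:tree pressure} holds and observes that every $F^k$-preimage of~$z_0$ is an $f^n$-preimage for $n = m(y)$, with no repetitions; this bounds the full $F$-tree sum by $\sum_n \exp(-p_0 n)\Lambda_n(z_0,t_0) < +\infty$, giving $\pressure(t_0,p_0) \le 0$, and strict negativity follows from the monotonicity of $\pressure$ in~$p$. The paper's argument is shorter and, crucially, works uniformly for every $t_0 \in \R$, since~\eqref{e:tree pressure} carries no restriction to $t > \tneg$.

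That uniformity is where your proposal has a genuine gap. Neither of your suggested fixes for $t_0 \le \tneg$ works as stated. ``Approximation from the right'' cannot reach $t_0 < \tneg$ while remaining in the region $t > \tneg$ where Proposition~\ref{p:conformal measures} supplies conformal measures; and even at $t_0 = \tneg$, the fact that $\pressure$ is non-increasing in~$t$ means values at larger $t$ give only \emph{lower} bounds on $\pressure(t_0,p_0)$. The Abramov/variational route runs into the usual circularity: the inequality $\pressure \le \sup_\nu (h_\nu - \ldots)$ is the hard half of the variational principle for countable Markov systems and already presupposes finiteness of $\pressure$. One can repair your approach by comparing $Z_n(t_0,p_0)$ with $Z_n(t_1,p_1)$ for a fixed $t_1 \in (\tneg,0)$, controlling $(\sup|\phi_{\uW}'|)^{t_0-t_1}$ via the global derivative bound in part~2 of Proposition~\ref{p:individual pressure}; this does go through, but it is noticeably more work than the paper's two-line tree-pressure argument.
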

\begin{proof}
\

\partn{1}
If~$\pressure(t, p) < + \infty$, then by~\cite[Proposition~2.1.9]{MauUrb03} the function~$- t \ln |F'| - pm$ defines a summable H\"older potential on symbolic space associated to~$F$.
Thus the desired result follows from~\cite[Theorem~2.6.12]{MauUrb03}, see~Remark~\ref{r:using symbolic}.

\partn{2}
Let $(t_0, p_0) \in \R^2$ be such that $p_0 > P(t_0)$.
Then for each point $z_0 \in V$ for which~\eqref{e:tree pressure} holds, we have
\begin{multline*}\label{e:full tree of f}
\sum_{k = 1}^{+ \infty} \sum_{y \in F^{-k}(z)} \exp( - p_0 m(y) ) |(F^k)'(y)|^{-t_0}
\\ \le
\sum_{n = 1}^{+ \infty} \exp( - p_0 n) \sum_{y \in f^{-n}(z_0)} |(f^n)'(y)|^{-t_0} < + \infty,
\end{multline*}
which implies that $\pressure(t_0, p_0) \le 0$.
This shows that the function~$\pressure$ is non\nobreakdash-positive on $\{ (t, p) \in (0, + \infty) \times \R \mid p > P(t) \}$.
That~$\pressure$ is strictly negative on this set follows from the fact that, on this set, $\pressure$ is strictly decreasing on each of its variables.
\end{proof}

\section{From the induced map to the original map}\label{s:lifting}
The purpose of this section is to prove the following theorem, which gives us some sufficient conditions to obtain the conclusions of the Main Theorem.

We denote by~$\Jcon(f)$ the ``conical Julia set'' of~$f$, which is defined in~\S\ref{ss:conical conformal}.
Recall that conformal measures were defined in~\S\ref{ss:conformal measures}.
\begin{theoalph}\label{t:lifting}
Let~$f$ be a rational map of degree at least two, let $(\hV, V)$ be a pleasant couple for~$f$ satisfying property~(*), and let~$\pressure$ be the corresponding pressure function defined in~\S\ref{ss:two variable pressure}.
Then for each~$t_0 \in (\tneg, + \infty)$, the following properties hold.
\begin{description}
\item[Conformal measure]
If~$\pressure$ vanishes at $(t, p) = (t_0, P(t_0))$, then there is a unique $(t_0, P(t_0))$\nobreakdash-conformal probability measure for~$f$.
Moreover this measure is non\nobreakdash-atomic, ergodic, and it is supported on~$\Jcon(f)$.
\item[Equilibrium state]
If~$\pressure$ is finite on a neighborhood of $(t, p) = (t_0, P(t_0))$, and vanishes at this point, then there is a unique equilibrium measure of~$f$ for the potential~$-t_0 \ln|f'|$.
Furthermore, this measure is ergodic, absolutely continuous with
respect to the unique $(t_0, P(t_0))$\nobreakdash-conformal
probability measure of~$f$, and its density is bounded from below
by a strictly positive constant almost everywhere.
If furthermore~$(\hV, V)$ satisfies the conclusions of Lemma~\ref{l:mixingness}, then the equilibrium state is exponentially mixing and it satisfies the Central Limit theorem.
\item[Analyticity of the pressure function]
If~$\pressure$ is finite on a neighborhood of~$(t, p) = (t_0, P(t_0))$ and for each~$t \in \R$ close to~$t_0$ we have $\pressure(t, P(t)) = 0$, then the pressure function~$P$ is real analytic on a neighborhood of~$t = t_0$.
\end{description}
\end{theoalph}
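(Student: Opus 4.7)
The plan is to transport thermodynamic structures from the induced system $(F, J(F))$ to $(f, J(f))$ via Mauldin-Urbanski theory~\cite{MauUrb03} applied to the potential $-t \ln|F'| - p m$, which by Lemma~\ref{l:pressure} is a summable H\"older potential on the associated symbolic space whenever $\pressure(t, p)$ is finite.

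For the conformal measure assertion, property~(*) together with $\pressure(t_0, P(t_0)) = 0$ places us in the setting of~\cite{MauUrb03}, which produces a unique $(t_0, P(t_0))$-conformal probability measure $\nu_F$ for $F$ on $J(F)$. I would then build a $(t_0, P(t_0))$-conformal measure $\nu$ for $f$ by the standard spreading procedure, pushing $\exp(-k P(t_0)) |(f^k)'|^{t_0} \nu_F|_W$ forward by $f^k$ for each $W \in \fD$ and each $0 \le k < m_W$ and normalizing; conformality is then a chain-rule computation. Non-atomicity and ergodicity descend from the corresponding properties of $\nu_F$; the support lies in $\Jcon(f)$ because every $\nu_F$-typical point returns infinitely often to $V$ under $F$, hence is conical for $f$. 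Uniqueness comes from the classical fact that any two $(t_0, P(t_0))$-conformal measures supported on $\Jcon(f)$ are mutually absolutely continuous with bounded Radon-Nikodym derivative, hence proportional.

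For the equilibrium-state assertion, finiteness of $\pressure$ near $(t_0, P(t_0))$ and Lemma~\ref{l:pressure}(1) imply $\partial_p \pressure(t_0, P(t_0)) = -\int m\, d\mu_F \in (-\infty, 0)$, where $\mu_F$ is the Mauldin-Urbanski equilibrium state for $-t_0 \ln|F'| - P(t_0) m$. I would lift $\mu_F$ to the $f$-invariant probability measure
$$ \mu = \left( \int m\, d\mu_F \right)^{-1} \sum_{W \in \fD} \sum_{k=0}^{m_W - 1} (f^k)_*(\mu_F|_W), $$
and Abramov's formula together with $\pressure(t_0, P(t_0)) = 0$ yields $h_\mu(f) - t_0 \chi_\mu(f) = P(t_0)$, so $\mu$ is an equilibrium state; absolute continuity with respect to $\nu$ and a density bounded below are inherited from the analogous property of $(\mu_F, \nu_F)$. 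The principal obstacle is uniqueness: any equilibrium state at $t_0 \in (\tneg, \tpos)$ has strictly positive Lyapunov exponent (combining Ruelle's inequality with $P(t_0) > 0$ from Proposition~\ref{p:asymptotes} and non-negativity of Lyapunov exponents for rational maps), so by Dobbs'~\cite{Dob0804} extension of Ledrappier's theorem it must be absolutely continuous with respect to the unique conformal measure $\nu$; Zweim\"uller's~\cite{Zwe05} first-return theorem then realises it as the lift of an $F$-invariant measure absolutely continuous with respect to $\nu_F$, which by uniqueness of $\mu_F$ must coincide with it, so the lifted measure equals $\mu$. Under the Lemma~\ref{l:mixingness} hypothesis aperiodicity of $F$ holds, and Young-tower theory~\cite{You99} combined with an exponential tail estimate $\int \exp(\alpha m)\, d\mu_F < \infty$ (obtained from finiteness of $\pressure$ slightly below $P(t_0)$ in the $p$-variable) yields exponential decay of correlations and the Central Limit Theorem.

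For the analyticity assertion, Lemma~\ref{l:pressure}(1) gives real-analyticity of $\pressure$ on a neighborhood of $(t_0, P(t_0))$, and the same computation as above shows $\partial_p \pressure(t_0, P(t_0)) < 0$. The implicit function theorem then furnishes a unique real-analytic function $t \mapsto p(t)$ solving $\pressure(t, p(t)) = 0$ near $t_0$, and the hypothesis $\pressure(t, P(t)) = 0$ for $t$ close to $t_0$ forces $P(t) = p(t)$ on a neighborhood of $t_0$, proving real-analyticity of $P$ at $t_0$.
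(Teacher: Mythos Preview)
Your overall strategy matches the paper's: apply Mauldin--Urba\'nski theory to the induced system, transfer the conformal and equilibrium measures down to~$f$, invoke Dobbs for uniqueness, Young's tower results for statistical properties, and the implicit function theorem for analyticity. The equilibrium-state and analyticity portions are essentially identical to the paper's arguments.

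There is, however, a genuine error in your conformal-measure construction. You propose to spread $\nu_F$ by pushing $\exp(-kP(t_0))\,|(f^k)'|^{t_0}\,\nu_F|_W$ forward under~$f^k$ for $W\in\fD$ and $0\le k<m_W$. That is the tower construction used to build the \emph{invariant} measure in the next paragraph, not a conformal one: the resulting object has no reason to satisfy the conformality relation, and the weights point the wrong way. The paper instead \emph{pulls back} along the first-entry structure: for each $W\in\fL_V$ (a connected component of $\CC\setminus K(V)$, on which $f^{m_W}$ is a univalent map onto a component of~$V$) one sets
\[
\mu_W(X)=\exp(-m_W P(t_0))\int_{f^{m_W}(X\cap W)}|\phi_W'|^{t_0}\,d\nu_F,
\]
and verifies that $\sum_{W\in\fL_V}\mu_W$ is finite, supported on $\Jcon(f)$, and $(t_0,P(t_0))$-\emph{sub}-conformal for~$f$. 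Proposition~\ref{p:conical conformal} then upgrades any sub-conformal measure on the conical set to a genuine conformal measure and simultaneously gives uniqueness and ergodicity. Your uniqueness sketch (``any two conformal measures on $\Jcon(f)$ are proportional'') is correct in spirit and is exactly what that proposition encodes, but the measure you feed into it must be produced by pull-back through~$\fL_V$, not by the tower push-forward you describe.

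A smaller gap: in the uniqueness argument for the equilibrium state you assert $P(t_0)>0$, but this can fail for $t_0\in(\tneg,+\infty)$ --- indeed even for $t_0\in(\tneg,\tpos)$ when~$f$ satisfies the TCE condition and $t_0$ lies beyond the first zero of~$P$. The paper splits into cases: if $\chiinf>0$ (the TCE case) then every invariant measure already has positive Lyapunov exponent; otherwise $\chiinf=0$, whence Proposition~\ref{p:asymptotes} gives $P(t_0)>-t_0\chiinf=0$, and then your Ruelle-inequality argument goes through. Also, once positive Lyapunov exponent is established, Dobbs' theorem gives uniqueness directly; the extra detour through Zweim\"uller that you sketch is not needed at this step (Zweim\"uller is used only for the Abramov formula in the existence part).
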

In~\S\S\ref{s:Whitney decomposition}, \ref{s:pull-back contribution}, \ref{s:proof of nice thermodynamics} we verify that, for a map as in the Main Theorem or more generally as in Theorem~\ref{t:nice thermodynamics} in~\S\ref{s:proof of nice thermodynamics}, and for a given~$t_0 \in (\tneg, \tpos)$ the function~$\pressure$ corresponding to a sufficiently small pleasant couple is finite on a neighborhood of~$(t, p) = (t_0, P(t_0))$ and that for each~$t \in \R$ close to~$t_0$ we have~$\pressure(t, P(t)) = 0$.

After some general considerations in~\S\ref{ss:conical conformal}, the assertions about the conformal measure are shown in~\S\ref{ss:conformal for rational}.
The assertions concerning the equilibrium state are shown in~\S\ref{ss:equilibrium}, and the analyticity of the pressure function is shown in~\S\ref{ss:lifting analyticity}.

Throughout the rest of this section we fix~$f$, $(\hV, V)$, $F$, $\pressure$ as in the statement of the theorem.
\subsection{The conical Julia set and sub-conformal measures}\label{ss:conical conformal}
The \emph{conical Julia set} of~$f$, denoted by $\Jcon(f)$, is by definition the set of all those points~$x$ in $J(f)$ for which there exists~$\rho(x) > 0$ and an arbitrarily large positive integer~$n$, such that the pull-back of the ball $B(f^n(x), \rho(x))$ to~$x$ by $f^n$ is univalent.
This set is also called \emph{radial Julia set}.

We will use the following general result, which is a strengthened version of ~\cite[Theorem~$5.1$]{McM00}, \cite[Theorem~$1.2$]{DenMauNitUrb98}, with the same proof.
Given $t, p \in \R$ we will say that a Borel measure $\mu$ is $(t, p)$\nobreakdash-\emph{sub-conformal} $f$, if for every Borel subset~$U$ of~$\CC \setminus \Crit(f)$ on which~$f$ is injective we have
\begin{equation}\label{e:sub-conformal}
\exp(p) \int_U |f'|^t d\mu \le \mu(f(U)).
\end{equation}
\begin{prop}\label{p:conical conformal}
Fix $t \in (\tneg, +\infty)$ and $p \in [P(t), + \infty)$.
If~$\mu$ is a $(t, p)$\nobreakdash-sub-conformal measure for~$f$ supported on $\Jcon(f)$, then $p = P(t)$, the measure~$\mu$ is $(t, P(t))$\nobreakdash-conformal, and every other $(t, P(t))$\nobreakdash-conformal measure is proportional to~$\mu$.
Moreover, every subset~$X$ of~$\CC$ such that $f(X) \subset X$ and $\mu(X) > 0$ has full measure with respect to~$\mu$.
\end{prop}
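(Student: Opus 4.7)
The plan is to compare $\mu$ with a $(t, P(t))$-conformal probability measure $\nu$ on $J(f)$, furnished by Proposition~\ref{p:conformal measures}, by iterating the (sub-)conformal relations along univalent pull-backs at conical points and controlling distortion via Koebe. Fix $x \in \Jcon(f) \cap \supp\mu$ with conical radius $\rho(x)$, and for each conical time $n$ let $W_n$ denote the univalent pull-back of $B_n := B(f^n(x), \rho(x))$ containing $x$. Iterating the sub-conformal inequality for $\mu$ over the branch $f^n \colon W_n \to B_n$ and using Koebe to replace $|(f^n)'|$ on $W_n$ by its value at $x$ up to a multiplicative constant, I obtain
\begin{equation*}
\mu(W_n) \le C \exp(-np)\, |(f^n)'(x)|^{-t}\, \mu(B_n),
\end{equation*}
while the same computation for $\nu$ (with equality throughout) yields
\begin{equation*}
\nu(W_n) \asymp \exp(-nP(t))\, |(f^n)'(x)|^{-t}\, \nu(B_n).
\end{equation*}

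To rule out $p > P(t)$: since $\supp\nu = J(f)$, the map $y \mapsto \nu(B(y, \rho(x)))$ is lower semicontinuous and strictly positive on the compact set $J(f)$, hence bounded below by some $c_0 > 0$; combined with $\mu(B_n) \le \mu(\CC) < \infty$, the two displays force $\mu(W_n)/\nu(W_n) \le C' \exp(n(P(t)-p))$, which tends to $0$ along the infinite sequence of conical times. Koebe guarantees that each $W_n$ is trapped between concentric round disks of comparable radii $\asymp |(f^n)'(x)|^{-1} \to 0$, so a Besicovitch-type differentiation argument forces $d\mu/d\nu = 0$ at $\mu$-a.e.\ conical point, contradicting $\mu \neq 0$. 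Hence $p = P(t)$.

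With $p = P(t)$ in hand, the same estimate shows that $\mu \ll \nu$ with bounded density $\phi := d\mu/d\nu$. To get uniqueness, I would show that $\phi$ is $f$-invariant $\nu$-a.e.: inserting $\phi$ into the sub-conformal inequality for $\mu$ and using the conformal identity for $\nu$ yields $\phi \le \phi \circ f$ on a set of full $\nu$-measure, and the reverse inequality comes from the finiteness of $\int \phi \, d\nu = \mu(\CC)$ together with iteration. Ergodicity of $\nu$, standard for conformal measures supported on $\Jcon(f)$ as in Denker--Mauldin--Nitecki--Urbanski, then forces $\phi$ to be constant $\nu$-a.e., so $\mu = \lambda \nu$. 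This simultaneously upgrades the sub-conformality of $\mu$ to genuine $(t, P(t))$-conformality, establishes uniqueness up to scaling, and gives non-atomicity (a conformal measure for a rational map of degree at least two cannot have atoms, since an atom would propagate under the conformal relation to all preimages with comparable mass, violating finiteness). For the final clause, let $X$ with $f(X) \subset X$ and $\mu(X) > 0$: pick a conical point $x$ that is a $\mu$-density point of $X$, use $f^n(X \cap W_n) \subset X \cap B_n$ together with Koebe to transfer the density from $W_n$ to $B_n$, and conclude via a covering of $J(f)$ by balls of radius $\rho$ that $\mu(X) = \mu(\CC)$.

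The principal technical obstacle is the differentiation step: the pull-backs $W_n$ are neither euclidean balls nor a nested sequence, and the admissible indices $n$ form an arbitrary $x$-dependent subset of $\N$. The remedy is to exploit Koebe's theorem to bracket $W_n$ between concentric euclidean disks of comparable radii, reducing the limiting behavior of $\mu(W_n)/\nu(W_n)$ to that of $\mu$- to $\nu$-mass ratios of euclidean balls shrinking to $x$, so that the classical Lebesgue differentiation theorem (or a suitable covering lemma on $\C$) can be invoked to extract the value of $d\mu/d\nu$ at $x$.
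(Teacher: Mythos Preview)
Your overall strategy matches the paper's: compare $\mu$ with a $(t,P(t))$-conformal measure $\nu$ on $J(f)$, and exploit Koebe distortion along univalent pull-backs at conical points. Your estimates $\mu(W_n)\le C e^{-np}|(f^n)'(x)|^{-t}\mu(B_n)$ and $\nu(W_n)\asymp e^{-nP(t)}|(f^n)'(x)|^{-t}\nu(B_n)$ are correct, and the differentiation argument you sketch (sandwich $W_n$ between comparable round disks via Koebe, then invoke a Besicovitch/Vitali differentiation theorem) is a legitimate route to $p=P(t)$ and $\mu\ll\nu$ with bounded density. The paper packages the same content as a Vitali $5r$-covering inequality (its Lemma~\ref{l:conical conformal}) rather than pointwise differentiation, but these are equivalent.

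There is, however, a genuine gap in your argument that $\phi=d\mu/d\nu$ is constant. You correctly obtain $\phi\le\phi\circ f$ $\nu$-a.e.\ from sub-conformality of $\mu$ and conformality of $\nu$. But your justification for the reverse inequality, ``the finiteness of $\int\phi\,d\nu$ together with iteration'', does not work: $\nu$ is \emph{conformal}, not $f$-invariant, so there is no relation $\int\phi\circ f\,d\nu=\int\phi\,d\nu$ to exploit. Appealing to ergodicity of $\nu$ from \cite{DenMauNitUrb98} is delicate as well: you would first need to know that $\nu$ itself is supported on $\Jcon(f)$, which you have not established (the paper does this by applying its comparison lemma to $\nu|_{\CC\setminus\Jcon(f)}$). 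The paper avoids all of this: it never claims $\phi=\phi\circ f$, only $\phi\circ f\ge\phi$. From that monotonicity, it picks a conical density point of $\{\phi\ge\delta\}$, pushes forward to large scale, and uses the locally eventually onto property of $f$ on $J(f)$ to conclude $\{\phi\ge\delta\}$ has full $\nu$-measure for every $\delta$ below the essential supremum. This is exactly the kind of argument you deploy for the final clause about forward-invariant $X$; you should use it here too. (Incidentally, the paper's proof of that final clause is slicker: observe that $\mu|_X$ is again $(t,P(t))$-sub-conformal and supported on $\Jcon(f)$, then invoke the uniqueness already proved.) Finally, your non-atomicity argument is both unnecessary (the proposition does not assert it) and incorrect as stated: conformal measures for rational maps \emph{can} have atoms.
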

The proof of this proposition depends on the following lemma.
\begin{lemm}\label{l:conical conformal}
Let $t, p \in \R$ and let $\mu$ be a $(t, p)$\nobreakdash-sub\nobreakdash-conformal measure supported on $\Jcon(f)$.
Suppose that for some $p' \le p$ there exists a non\nobreakdash-zero $(t, p')$\nobreakdash-conformal measure~$\nu$ that is
supported on $J(f)$. Then $p' = p$ and $\mu$ is absolutely
continuous with respect to $\nu$. In particular $\nu(\Jcon(f)) >
0$.
\end{lemm}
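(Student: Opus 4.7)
The plan is to compare the local masses of $\mu$ and $\nu$ on the univalent pull\nobreakdash-backs supplied by the definition of $\Jcon(f)$, using the Koebe distortion theorem together with the fact that $\nu$ has full topological support on $J(f)$. We may assume $\mu \neq 0$, since otherwise the lemma is vacuous.

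Fix $x \in \Jcon(f)$ and let $\rho = \rho(x) > 0$ be as in the definition of the conical Julia set. For each of the arbitrarily large integers $n \ge 1$ for which the pull\nobreakdash-back $W_n$ of $B(f^n(x), \rho)$ to $x$ by $f^n$ is univalent, set $B_n \= B(f^n(x), \rho/2)$ and $W_n' \= \phi_n(B_n)$, where $\phi_n \= (f^n|_{W_n})^{-1}$. The univalence of $f^n$ on $W_n$ forces $f^j(W_n)$ to be disjoint from $\Crit(f)$ for every $0 \le j < n$, so iterating the sub\nobreakdash-conformal and conformal relations for $\mu$ and $\nu$ up to time $n$ yields, on $W_n'$,
\begin{equation*}
\exp(pn)\int_{W_n'}|(f^n)'|^t\, d\mu \le \mu(B_n),
\qquad
\exp(p'n)\int_{W_n'}|(f^n)'|^t\, d\nu = \nu(B_n).
\end{equation*}
By Koebe distortion on the ball $B(f^n(x),\rho)$, $|(f^n)'(z)|$ is comparable to $|(f^n)'(x)|$ for $z \in W_n'$ with a universal constant, so dividing gives
\begin{equation*}
\frac{\mu(W_n')}{\nu(W_n')} \le K \exp((p'-p)n)\frac{\mu(B_n)}{\nu(B_n)}.
\end{equation*}
Since $\nu$ is a non\nobreakdash-zero $(t,p')$\nobreakdash-conformal measure supported on $J(f)$, the locally eventually onto property gives $\supp \nu = J(f)$, and by compactness of $J(f)$ one has $\nu(B_n) \ge m(\rho)>0$ uniformly in $n$; combined with $\mu(B_n) \le \mu(\CC) < +\infty$ this gives a bound
\begin{equation*}
\frac{\mu(W_n')}{\nu(W_n')} \le C_x\exp((p'-p)n)
\end{equation*}
with $C_x$ depending only on $x$.

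As $n$ runs through the infinite set of good integers, Koebe distortion also guarantees that $|(f^n)'(x)| \to +\infty$, that the $W_n'$ shrink to $x$, and that each $W_n'$ is sandwiched between two concentric balls around $x$ of comparable radii. A Besicovitch\nobreakdash-type differentiation argument (as in \cite{DenMauNitUrb98,McM00}) applied to the pseudo\nobreakdash-balls $W_n'$ then yields, for $\mu$\nobreakdash-almost every $x \in \Jcon(f)$,
\begin{equation*}
\limsup_{r \to 0^+}\frac{\mu(B(x,r))}{\nu(B(x,r))} \le C_x\limsup_{n \to +\infty}\exp((p'-p)n).
\end{equation*}
If $p' < p$ the right\nobreakdash-hand side is $0$, forcing $\mu(\Jcon(f)) = 0$ and hence $\mu = 0$, a contradiction; therefore $p'=p$. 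In the equality case the upper density is finite $\mu$\nobreakdash-a.e.\ on $\Jcon(f)$, which gives $\mu \ll \nu$, and since $\mu \neq 0$ this also forces $\nu(\Jcon(f)) > 0$. The main delicate point is the density step: since the $W_n'$ are not round balls and $\nu$ need not be doubling, one must use the Koebe sandwich between two concentric balls of comparable radii to exhibit a Vitali relation for $\nu$ adapted to the $W_n'$ and then apply Besicovitch\nobreakdash-type differentiation, as in the analogous arguments of the references above.
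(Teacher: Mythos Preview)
Your argument is correct and rests on the same ingredients as the paper's proof (Koebe distortion on the univalent pull\nobreakdash-backs, full support of~$\nu$ on~$J(f)$, and a covering argument), but you organize it differently and end up invoking heavier machinery than is needed.

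The paper first stratifies $\Jcon(f) = \bigcup_{\rho_0 > 0} \Jcon(f,\rho_0)$ where $\Jcon(f,\rho_0) = \{ x : \rho(x) \ge \rho_0\}$, so that all Koebe constants and the lower bound $\nu(B(f^n(x),\rho_0/2))$ are uniform in~$x$. Koebe then gives, for each $x \in \Jcon(f,\rho_0)$, arbitrarily small radii~$r$ and integers~$n$ with
\[
\mu(B(x,5r)) \le C\exp(-np)\,r^t
\quad\text{and}\quad
\nu(B(x,r)) \ge C^{-1}\exp(-np')\,r^t,
\]
the factor~$5$ being inserted from the start. One now applies the elementary $5r$ Vitali covering lemma directly: for any Borel set $X \subset \Jcon(f,\rho_0)$ and any $n_0$, cover~$X$ by the balls $B(x,5r)$ with $n \ge n_0$, extract a disjoint subfamily $\{B(x_j,r_j)\}$ whose $5$-dilates still cover~$X$, and sum to get
\[
\mu(X) \le \sum_j \mu(B(x_j,5r_j)) \le C^2 \exp(-n_0(p-p')) \sum_j \nu(B(x_j,r_j)) \le C^2 \exp(-n_0(p-p'))\,\nu(\CC).
\]
Letting $n_0 \to \infty$ gives $\mu(X)=0$ when $p'<p$; when $p'=p$ the same computation with an open set~$U \supset X$ in place of~$\CC$ on the right yields $\mu(X) \le C^2 \nu(U)$, hence $\mu \ll \nu$ on each stratum.

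Your route instead bounds the ratio $\mu(W_n')/\nu(W_n')$ and then has to pass from the Koebe\nobreakdash-round sets~$W_n'$ to genuine balls via a Besicovitch\nobreakdash-type differentiation step, which you correctly flag as delicate since~$\nu$ need not be doubling. That step can be made rigorous (and, indeed, in the end it amounts to constructing a Vitali relation and running essentially the $5r$ argument above), but the paper's organization sidesteps it entirely: by building the dilation factor into the two ball estimates and summing over a disjoint family, no density or differentiation theorem is ever invoked.
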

\begin{proof}
For $\rho > 0$ put $\Jcon(f, \rho) \= \{ x \in \Jcon(f) \mid \rho(x) \ge \rho \}$, so that $\Jcon(f) = \bigcup_{\rho > 0} \Jcon(f, \rho)$.
For each $\rho_0 > 0$, Koebe Distortion Theorem implies that there is a constant $C > 1$ such that for every $x \in \Jcon(f, \rho_0)$ there are arbitrarily small $r > 0$, so that for some integer $n \ge 1$ we
have,
\begin{equation}\label{e:conical}
\mu(B(x, 5r)) \le C \exp(-np) r^t \ \mbox{ and } \ \nu(B(x, r))
\ge C^{-1} \exp(-np') r^{t}.
\end{equation}
Given a subset $X$ of $\Jcon(f, \rho_0)$, by Vitali's covering lemma, for every $r_0 > 0$ we can find a collection of pairwise disjoint
balls $(B(x_j, r_j))_{j > 0}$ and strictly positive integers $(n_j)_{j > 0}$, such that $x_j \in X$, $r_j \in (0, r_0)$, $X \subset \bigcup_{j > 0} B(x_j, 5 r_j)$ and such that for each $j > 0$ the inequalities~\eqref{e:conical} hold for $x \= x_j$ and $r \= r_j$ and $n = n_j$.
Moreover, for each integer~$n_0 \ge 1$ we may choose~$r_0$ sufficiently small so that for each~$j > 0$ we have $n_j \ge n_0$.
Since by hypothesis $p' \le p$, we obtain
$$
\nu(X) \ge C^{-2} \exp(n_0(p - p')) \mu(X).
$$

Suppose by contradiction that $p' < p$. Choose $\rho_0 > 0$ such
that $\mu(\Jcon(f, \rho_0)) > 0$ and set $X \= \Jcon(f, \rho_0)$.
As in the inequality above $n_0 > 0$ can by taken arbitrarily
large, we obtain a contradiction. So $p' = p$ and it follows that
$\mu$ is absolutely continuous with respect to $\nu$.
\end{proof}
\begin{proof}[Proof of Proposition~\ref{p:conical conformal}]
Let~$\nu$ be a $(t, P(t))$\nobreakdash-conformal measure~$\nu$ for~$f$ supported on $J(f)$.
By~\cite[Theorem~A and Theorem~A.7]{PrzRivSmi04} there is at least one such measure, see also~\cite{Prz99}.
So Lemma~\ref{l:conical conformal} implies that $p = P(t)$, and that~$\mu$ is absolutely continuous with respect to~$\nu$.

In parts~$1$ and~$2$ we show that~$\nu$ is proportional to~$\mu$.
It follows in particular that~$\mu$ is conformal.
In part~3 we complete the proof of the proposition by showing the last statement of the proposition.

\partn{1}
First note that $\nu' \= \nu|_{\CC \setminus \Jcon(f)}$ is a
conformal measure for $f$ of the same exponent as $\nu$. Then
Lemma~\ref{l:conical conformal} applied to $\nu = \nu'$ implies
that, if $\nu'$ is non\nobreakdash-zero, then $\nu'(\Jcon(f)) > 0$. This
contradiction shows that $\nu'$ is the zero measure and that~$\nu$ is supported on~$\Jcon(f)$.

\partn{2}
Denote by~$g$ the density~$\mu$ with respect to~$\nu$.
Since~$\nu$ is conformal and~$\mu$ sub-conformal, the function~$g$ satisfies $g \circ f \ge g$ on a set of full $\nu$-measure.
Let $\delta > 0$ be such that $\nu(\{ g \ge \delta
\}) > 0$.
As $\nu$ is supported on $\Jcon(f)$, there is a density point of $\{g \ge \delta \}$ for $\nu$ that belongs to $\Jcon(f)$.
Going to large scale and using $g \circ f \ge g$, we conclude that
$\{ g \ge \delta \}$ contains a ball of definite size, up to a set
of $\nu$-measure~$0$.
It follows by the locally eventually onto property of~$f$ on~$J(f)$ that the set $\{ g \ge \delta \}$ has full measure with respect to~$\nu$.
This implies that $g$ is constant $\nu$-almost everywhere and therefore that~$\nu$ and~$\mu$ are proportional.
In particular~$\mu$ is conformal.

\partn{3}
Suppose that~$X$ is a Borel subset of~$\CC$ of positive measure with respect to~$\mu$ and such that $f(X) \subset X$.
Then the restriction~$\mu|_X$ of~$\mu$ to~$X$ is a $(t, P(t))$\nobreakdash-sub-conformal measure supported on the conical Julia set.
It follows that $\mu|_X$ is proportional to~$\mu$, and thus that $\mu|_X = \mu$ and that~$X$ has full measure with respect to~$\mu$.
\end{proof}
\subsection{Conformal measure}\label{ss:conformal for rational}
Given $t, p \in \R$ we will say that a measure~$\mu$ supported on the maximal invariant set~$J(F)$ of~$F$ is $(t, p)$\nobreakdash-\emph{conformal for}~$F$ if for every Borel subset~$U$ of a connected component~$W$ of~$D$ we have
$$
\mu(F(U)) = \exp(pm_W) \int_U |F'|^t \, d\mu.
$$
In view of~\cite[Proposition~2.1.9]{MauUrb03}, the hypothesis that
$$
P(F, -t_0 \ln|F'| - P(t_0)m)
=
\pressure(t_0, P(t_0))
=
0
$$
implies that~$-t_0 \ln|F'| - P(t_0)m$ defines a summable H{\"o}lder potential on the symbolic space associated to~$F$.
Furthermore, by Theorem~3.2.3 and Proposition~4.2.5 of~\cite{MauUrb03} it follows that the induced map~$F$ admits a non\nobreakdash-atomic $(t_0, P(t_0))$\nobreakdash-conformal measure supported on~$J(F)$.
Therefore the assertions in~Theorem~\ref{t:lifting} about conformal measures are a direct consequence of Proposition~\ref{p:conical conformal} and of the following proposition.
\begin{prop}\label{p:lifting conformal}
Let~$F$ be the canonical induced map associated to a pleasant couple $(\hV, V)$ for~$f$ that satisfies property~(*).
Then for every $t \in (\tneg, + \infty)$ and $p \in [P(t), + \infty)$, each $(t, p)$\nobreakdash-conformal measure of~$F$ is in fact $(t, P(t))$-conformal, and it is the restriction to~$V$ of a non\nobreakdash-atomic $(t, p)$\nobreakdash-conformal measure of~$f$ supported on $\Jcon(f)$.
\end{prop}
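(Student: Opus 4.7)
The plan is to construct a $(t,p)$\nobreakdash-conformal measure~$\mu$ for~$f$ by spreading~$\mu_F$ along the univalent inverse branches provided by the decomposition of~$\CC \setminus K(V)$ into its connected components, to verify that~$\mu$ is supported on~$\Jcon(f)$, and then to invoke Proposition~\ref{p:conical conformal} to extract $p = P(t)$, uniqueness, and non\nobreakdash-atomicity in one blow.

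Concretely, for each connected component~$W$ of~$\CC \setminus K(V)$, by the observations in~\S\ref{ss:nice sets and couples} the map~$f^{m_W}$ extends univalently to~$\hW$; let $\psi_W \colon \hV^{c(W)} \to \hW$ denote the resulting inverse branch. I would set
$$
\mu_W(A) \= \exp(-p\, m_W) \int_{f^{m_W}(A \cap W)} |\psi_W'|^t\, d\mu_F,\qquad \mu \= \sum_W \mu_W,
$$
extended by zero on~$K(V)$. Since the components with~$m_W = 0$ are exactly the~$V^c$ (with $\psi_{V^c} = \id$), this forces $\mu|_V = \mu_F$ by construction. The support of each~$\mu_W$ is contained in~$\psi_W(J(F) \cap V^{c(W)})$, and because every $y \in J(F)$ enjoys univalent pull-backs of~$\hV$ at arbitrarily large $f$\nobreakdash-orders via the iterates of~$F$, composing with~$\psi_W$ yields univalent pull-backs of balls in~$\hV^{c(W)}$ at~$\psi_W(y)$ of arbitrarily large $f$\nobreakdash-order, so $\operatorname{supp}(\mu) \subset \Jcon(f)$. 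Non\nobreakdash-atomicity of~$\mu$ is inherited from that of~$\mu_F$, which was recalled before the proposition via Theorem~3.2.3 of~\cite{MauUrb03}.

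For the $(t,p)$\nobreakdash-conformality of~$\mu$, I would distinguish two cases for a Borel set~$U$ on which~$f$ is injective. If $U \subset W$ with $m_W \ge 1$, then $f(U) \subset W' \= f(W)$, another component with $m_{W'} = m_W - 1$, and the required identity reduces to the chain rule $|\psi_W'(y)| = |\psi_{W'}'(y)| \cdot |f'(\psi_W(y))|^{-1}$ inserted into the definitions. If~$U \subset V^c$, I would decompose $U \cap J(F)$ along the components~$W^*$ of~$D$ that it meets, observe that~$f(W^*)$ is contained in a unique component~$W'(W^*)$ of~$\CC \setminus K(V)$ with $m_{W'(W^*)} = m_{W^*} - 1$, and use the identification $\phi_{W^*} = f^{-1}|_{W^*} \circ \psi_{W'(W^*)}$ together with the $(t,p)$\nobreakdash-conformality of~$\mu_F$ for~$F$; summing the contributions over all such~$W^*$, and using injectivity of~$f$ on~$U$, gives exactly~$\exp(p) \int_U |f'|^t\, d\mu$ on the one hand and~$\mu(f(U))$ on the other.

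The main technical obstacle I anticipate is the finiteness of~$\mu$: one must establish the convergence of
$$
\sum_{W \colon m_W \ge 1,\ W \cap V = \emptyset} \exp(-p\, m_W) \int_{V^{c(W)}} |\psi_W'|^t\, d\mu_F.
$$
Koebe distortion applied to the univalent maps $\psi_W \colon \hV^{c(W)} \to \hW$ restricted to~$V^{c(W)}$ gives uniform control of each $|\psi_W'|$, so the question reduces to summing over the tree of first\nobreakdash-entry univalent pull-backs of~$V$ lying outside~$V$; this sum must be compared with the tree\nobreakdash-pressure series for~$f$ at exponent $p \ge P(t)$, and at the boundary value $p = P(t)$ one has to exploit the summability already built into the finiteness of~$\mu_F(V)$ via the~$F$\nobreakdash-conformality together with the bound $\pressure(t,P(t)) \le 0$ coming from Lemma~\ref{l:pressure} and the Mauldin-Urbanski theory. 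Once finiteness is secured, Proposition~\ref{p:conical conformal} applied to the nonzero measure~$\mu$ delivers the remaining conclusions.
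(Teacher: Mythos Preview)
Your construction of~$\mu$ by spreading~$\mu_F$ along the inverse branches~$\psi_W$ is exactly the paper's construction, and your reduction to Proposition~\ref{p:conical conformal} is also the paper's endgame. The genuine gap is in the finiteness step, which you correctly identify as the main obstacle but do not resolve.

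Your proposed route to finiteness does not go through. The bound $\pressure(t,P(t)) \le 0$ from Lemma~\ref{l:pressure} controls sums over the elements of~$\fD$ (the domain pieces of~$F$, which lie \emph{inside}~$V$), not over the elements of~$\fL_V$ with $m_W \ge 1$ (the first\nobreakdash-entry pull-backs lying \emph{outside}~$V$); these are different collections, and there is no direct way to pass from one to the other. Likewise, the tree\nobreakdash-pressure characterization~\eqref{e:tree pressure} is only a $\limsup$ statement and does not yield summability at the critical exponent $p = P(t)$; in fact, proving $\sum_{W \in \fL_V} \exp(-P(t)m_W)\diam(W)^t < \infty$ from scratch is precisely the content of part~1 of Proposition~\ref{p:pull-back contribution}, which requires the extra hypothesis that~$f$ is expanding away from critical points --- a hypothesis not available here.

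The idea you are missing is to use the already\nobreakdash-existing $(t,P(t))$\nobreakdash-conformal measure~$\hmu$ for~$f$ supplied by Proposition~\ref{p:conformal measures} (this is where $t > \tneg$ enters). Koebe distortion gives $\hmu(W) \asymp \exp(-m_W P(t))\diam(W)^t$ uniformly over $W \in \fL_V$, and the same distortion gives $\mu_W(\CC) \asymp \exp(-m_W p)\diam(W)^t$. Since $p \ge P(t)$ and the~$W$'s are pairwise disjoint, you get $\sum_W \mu_W(\CC) \lesssim \sum_W \hmu(W) \le \hmu(\CC) < \infty$ immediately. This is the paper's argument.

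A minor point: you aim for full $(t,p)$\nobreakdash-conformality of~$\mu$, but Proposition~\ref{p:conical conformal} only requires \emph{sub}\nobreakdash-conformality, which is easier to verify (no need to match all components of~$f(U)$ exactly). Once Proposition~\ref{p:conical conformal} tells you~$\mu$ is proportional to a conformal measure, full conformality comes for free. Your case analysis for conformality when $U \subset V^c$ also implicitly assumes $m_{W'(W^*)} = m_{W^*} - 1$, which need not hold: $f(W^*)$ may sit inside a component~$W'$ of~$\CC \setminus K(V)$ whose first\nobreakdash-entry time is strictly smaller than~$m_{W^*} - 1$.
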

\begin{proof}
The proof of this proposition is a straight forward generalization of that of~\cite[Proposition~B.2]{PrzRiv07}.
We will only give a sketch of the proof here.

Since~$t > \tneg$ there is a $(t, P(t))$\nobreakdash-conformal measure~$\hmu$ for~$f$ whose topological support is equal to the whole Julia set of~$f$ (Proposition~\ref{p:conformal measures}).
Let~$\fL_V$ be the collection of connected components of $\CC \setminus K(V)$.
Notice that for each $W \in \fL_V$ we have $\hmu(W) \sim \exp(-m_W P(t)) \diam(W)^t$, for an implicit constant independent of~$W$.

Let~$\mu$ be a $(t, p)$-conformal measure for~$F$.
For each $W \in \fL_V$ denote by $\phi_W : \hV^{c(W)} \to \hW$ the inverse of $f^{m_W}|_{\hW}$, and let $\mu_W$ be the measure supported on $W$, defined by
$$
\mu_W(X) = \exp(-m_W p) \int_{f^{m_W}(X \cap W)} |\phi_W'|^t d\mu.
$$
Clearly the measure $\sum_{W \in \fL_V} \mu_W$ is supported on~$\Jcon(f)$, non\nobreakdash-atomic, and for each $W \in \fL_V$ we have $\mu_W(\CC) \sim \exp(-m_Wp) \diam(W)^t$.
Since we also have $\hmu(W) \sim \exp(-m_W P(t)) \diam(W)^t$, and $p \ge P(t)$, it follows that the measure $\sum_{W \in \fL_V} \mu_W$ is finite.
In view of Proposition~\ref{p:conical conformal}, to complete we just need to show that $\sum_{W \in \fL_V} \mu_W$ is $(t, p)$\nobreakdash-sub-conformal for~$f$.
The proof of this fact is similar to what was done in~\cite[Proposition~B.2]{PrzRiv07}.
\end{proof}
\subsection{Equilibrium state}\label{ss:equilibrium}
The following are crucial estimates.
\begin{lemm}\label{l:tail estimate}
Suppose that the pressure function~$\pressure$ is finite on a neighborhood of~$(t, p) = (t_0, P(t_0))$ and that it vanishes at this point.
If~$\mu$ is the unique $(t_0, P(t_0))$\nobreakdash-conformal measure of~$F$, then the following properties hold.
\begin{enumerate}
\item[1.]
For every~$(t, p) \in \R^2$ and~$\gamma > 0$
$$ \int \left| t \ln |F'| + p m \right|^\gamma d\mu < + \infty. $$
\item[2.]
There is $\varepsilon_0 > 0$ such that for every sufficiently large integer~$n$ we have
$$ \sum_{\substack{W \text{ connected component of } D \\ m_W \ge n}} \mu(W) \le \exp(-\varepsilon_0 n). $$
\end{enumerate}
In particular
$$
\sum_{W \text{ connected component of } D} m_W \mu(W) < + \infty.
$$
\end{lemm}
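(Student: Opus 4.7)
The strategy is to reduce both statements to the finiteness of the partition function $Z_1(t,p)$ at suitably chosen parameters close to $(t_0, P(t_0))$, via a Gibbs-type two-sided estimate for~$\mu$.

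First I would establish that, with implicit constants independent of the connected component~$W$ of~$D$,
$$ \mu(W) \asymp \left( \sup_{z \in V^{c(W)}} |\phi_W'(z)| \right)^{t_0} \exp(- P(t_0) m_W). $$
Applying the conformality relation to $U = W$, $F(U) = V^{c(W)}$ gives $\mu(V^{c(W)}) = \exp(P(t_0) m_W) \int_W |F'|^{t_0}\, d\mu$. Since $(\hV, V)$ is a pleasant couple, $f^{m_W}$ is univalent on the enlarged domain~$\hW$ whose image is $\hV^{c(W)} \supset \overline{V^{c(W)}}$; as there are only finitely many shapes~$\hV^c$, Koebe's theorem yields a uniform distortion bound for $\phi_W = (f^{m_W}|_W)^{-1}$ on $V^{c(W)}$. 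Therefore $|F'|^{t_0}$ is essentially constant on~$W$, comparable to $(\sup_{V^{c(W)}} |\phi_W'|)^{-t_0}$; combined with the fact that $\mu(V^c)$ is bounded above and below for each of the finitely many $c \in \CJ$ (the latter via the Gibbs/positivity property of Mauldin--Urbanski conformal measures), the displayed asymptotic follows.

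For part~2, the assumption that~$\pressure$ is finite on a neighborhood of $(t_0, P(t_0))$ provides $\varepsilon_0 > 0$ with $\pressure(t_0, P(t_0) + 2\varepsilon_0) < + \infty$, which forces $Z_1(t_0, P(t_0) + 2\varepsilon_0) < + \infty$ (if $Z_1$ were infinite, so would be every $Z_n$, contradicting finiteness of $\pressure$). Combined with the asymptotic above,
$$ \sum_{\substack{W \in \fD \\ m_W \ge n}} \mu(W)
\le C \exp(-2 \varepsilon_0 n) \, Z_1\bigl(t_0, P(t_0) + 2\varepsilon_0\bigr), $$
giving the claimed exponential tail after replacing~$2\varepsilon_0$ by~$\varepsilon_0$.

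For part~1, by the triangle inequality (and doubling $\gamma$) it is enough to bound $\int m^\gamma\, d\mu$ and $\int |\ln |F'||^\gamma\, d\mu$ for every $\gamma > 0$. The first is immediate from part~2: exponential decay of $\mu(\{ m \ge n\})$ yields finite moments of every polynomial order. For the second, shrink the neighborhood to pick $\delta > 0$ with $\pressure(t_0 - \delta, P(t_0)) < + \infty$, so that $Z_1(t_0 - \delta, P(t_0)) < + \infty$. Bounded Koebe distortion implies that $\ln|F'|$, restricted to~$W$, equals $-\ln \sup_{V^{c(W)}} |\phi_W'|$ up to a uniformly bounded additive constant. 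Using the elementary inequality $|\ln x|^\gamma \le C_{\gamma, \delta}\, x^{-\delta}$ valid on any bounded interval $(0, M]$, the asymptotic of step~1 yields
$$ \sum_{W \in \fD} \bigl| \ln \sup |\phi_W'| \bigr|^\gamma \mu(W)
\le C \sum_{W \in \fD} \bigl( \sup |\phi_W'| \bigr)^{t_0 - \delta} \exp(- P(t_0) m_W)
= C\, Z_1(t_0 - \delta, P(t_0)) < + \infty. $$
The final conclusion $\sum_W m_W \mu(W) < +\infty$ is the special case $(t, p, \gamma) = (0, 1, 1)$ of part~1. The main obstacle is the uniformity of the Gibbs-type asymptotic for $\mu(W)$; this rests in an essential way on the pleasant couple structure, namely univalence of~$f^{m_W}$ on the strictly larger domain $\hW$ mapping onto $\hV^{c(W)} \supsetneq V^{c(W)}$, together with the finiteness of~$\CJ$.
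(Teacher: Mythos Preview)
Your approach is essentially that of the paper, but there is a sign slip in part~2: you need $\pressure(t_0, P(t_0) - 2\varepsilon_0) < +\infty$, not $P(t_0) + 2\varepsilon_0$. With the plus sign the factor you extract from each term is $\exp(+2\varepsilon_0 m_W) \ge \exp(2\varepsilon_0 n)$, which goes the wrong way and the displayed inequality is false. It is precisely the finiteness of~$\pressure$ at parameters \emph{below} $P(t_0)$ --- the real content of the neighborhood hypothesis, since finiteness propagates upward in~$p$ automatically --- that yields the exponential tail. With the sign corrected your inequality becomes
\[
\sum_{\substack{W \in \fD \\ m_W \ge n}} \mu(W) \le C\, \exp(-2\varepsilon_0 n)\, Z_1\bigl(t_0, P(t_0) - 2\varepsilon_0\bigr),
\]
which is valid.

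Aside from this, your argument is correct and close to the paper's. The paper is slightly more economical: it shifts both variables at once, using $Z_1(t_0 - \varepsilon_0, P(t_0) - \varepsilon_0) < +\infty$ to obtain a single summability
\[
\sum_{W} \mu(W)\, \exp(\varepsilon_0 m_W)\, \bigl(\sup_W |F'|\bigr)^{\varepsilon_0} < +\infty,
\]
from which both parts follow at once (part~2 by dropping the last factor via the uniform lower bound on~$|F'|$, part~1 by dominating the polynomial $|t\ln|F'| + pm|^\gamma$ by the exponential). Your two separate shifts, in~$p$ for part~2 and in~$t$ for the $|\ln|F'||^\gamma$ moment, accomplish the same thing in two steps. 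Also note that only the upper bound $\mu(W) \le C_0 \exp(-P(t_0) m_W) \sup_W |F'|^{-t_0}$ is actually used; the full two-sided Gibbs asymptotic is unnecessary here.
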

We have stated part~1 for every~$(t, p) \in \R^2$, although we will only use it for~$(t, p)$ close to~$(t_0, p_0)$.
\begin{proof}
Since the function~$\pressure$ is finite on a neighborhood of $(t, p) = (t_0, P(t_0))$, there is $\varepsilon_0 > 0$ such that $\pressure(t_0 - \varepsilon_0, P(t_0) - \varepsilon_0) < + \infty$.
By ~\cite[Proposition~2.1.9]{MauUrb03} this implies that,
$$
\sum_{W \text{connected component of } D} \exp(-(P(t_0) - \varepsilon_0)m_W) \sup \{ |F'(z)|^{- (t_0 - \varepsilon_0)} \mid z \in W \}
<
+ \infty.
$$
As for each connected component~$W$ of~$D$ we have
$$ \mu(W) \le C_0 \exp(-P(t_0)m_W) \sup \{ |F'(z)|^{- t_0} \mid z \in W \}, $$
we obtain the conclusion of part~1 holds for each~$(t, p) \in \R^2$ and that
$$ C_1 \= \sum_{W \text{connected component of } D} \mu(W) \exp(\varepsilon_0 m_W) < + \infty. $$
So for each $n \ge 1$ we have
$$ \exp(\varepsilon_0 n) \sum_{\substack{W \text{ connected component of } D \\ m_W \ge n}} \mu(W) \le C_1. $$
This proves part~2 of the lemma.
\end{proof}

\subsubsection*{Existence}
It follows from standard considerations that~$F$ has an invariant measure~$\rho$ that is absolutely continuous with respect to the $(t_0, P(t_0))$\nobreakdash-conformal measure~$\mu$ of~$F$, and that the density of~$\rho$ with respect to~$\mu$ is bounded from below by a strictly positive constant almost everywhere.
This result can be found for example in~\cite[{\S}1]{gouezelthesis}, by observing that $F|_{J(F)}$ is a ``Gibbs-Markov map''.
For a proof in a setting closer to ours, but that only applies to the case when~$V$ is connected, see~\cite[{\S}6]{MauUrb03}.

The measure
$$
\hrho \= \sum_{W \text{connected component of } D} \sum_{j = 0}^{m_W - 1} f^j_* \rho|_W
$$
is easily seen to be invariant by~$f$ and part~2 of Lemma~\ref{l:tail estimate} implies that it is finite.
Furthermore this measure is absolutely continuous with respect to the $(t_0, P(t_0))$\nobreakdash-conformal measure~$\hmu$ of~$f$, and its density is bounded from below by a strictly positive constant on a subset of~$V$ of full measure with respect to $\mu = \hmu|_V$.
It follows from the locally eventually onto property of Julia sets that the density of~$\hrho$ with respect to~$\hmu$ is bounded from below by a strictly positive constant almost everywhere; see for example~\cite[\S8]{PrzRiv07} for details.
As~$\hmu$ is ergodic (Proposition~\ref{e:sub-conformal}) it follows that~$\hrho$ is also ergodic.

We will show now that the probability measure~$\trho$ proportional to~$\hrho$ is an equilibrium state of~$f$ for the potential $-t_0 \ln |f'|$.
We first observe that by part~1 of Lemma~\ref{l:tail estimate} and~\cite[Theorem~2.2.9]{MauUrb03} the measure~$\rho$ is an equilibrium state of~$F$ for the potential $-t_0 \ln |F'| - P(t_0) m$, see also Remark~\ref{r:using symbolic}.
That is, we have
$$
P(F, -t_0 \ln |F'| - P(t_0) m)
=
h_\rho(F) - \int t_0 \ln |F'| +  P(t_0) m \, d\rho,
$$
which is equal to~$0$ by hypothesis.
By the generalized Abramov's formula~\cite[Theorem~5.1]{Zwe05}, we have $h_\rho(F) = h_{\trho}(f) \hrho(\CC)$, and by definition of~$\hrho$ we have $\int m \, d\rho = \hrho(\CC)$.
We thus obtain,
\begin{multline*}
h_{\trho}(f)
=
(\hrho(\CC))^{-1} h_\rho(F)
=
(\hrho(\CC))^{-1} \int t_0 \ln |F'| +  P(t_0) m \, d\rho
\\ =
(\hrho(\CC))^{-1} t_0 \int \ln |f'| d\hrho + P(t_0)
=
t_0 \int \ln |f'| d\trho + P(t_0).
\end{multline*}
This shows that~$\trho$ is an equilibrium state of~$f$ for the potential $-t_0 \ln |f'|$.
\subsubsection*{Uniqueness}
In view of~\cite[Theorem~8]{Dob0804}, we just need to show that the Lyapunov exponent of each equilibrium state of~$f$ for the potential $-t_0 \ln |f'|$ is strictly positive; see also~\cite{Led84}.

Let~$\trho'$ be an equilibrium state of~$f$ for the potential $-t_0 \ln |f'|$. 
If~$f$ satisfies the \TCE{} then it follows that the Lyapunov exponent of~$\trho'$ is strictly positive, as in this case we have $\chiinf > 0$.
Otherwise we have $\chiinf = 0$, and then $P(t_0) > 0$ by Proposition~\ref{p:asymptotes}.
It thus follows that~$h_{\trho'}(f) > 0$, and therefore that the Lyapunov exponent of~$\trho'$ is strictly positive by Ruelle's inequality.
\subsubsection*{Statistical properties}
When~$F$ satisfies the conclusions of Lemma~\ref{l:mixingness}, the statistical properties of~$\trho$ can be deduced from the tail estimate given by part~2 of Lemma~\ref{l:tail estimate}, using Young's results in~\cite{You99}.
In the case when there is only one critical point in the Julia set one can apply these results directly, and in the general case one needs to consider the first return map of~$F$ to the set~$V^{\widetilde{c}}$, where $\widetilde{c}$ is the critical point given by the conclusion of~Lemma~\ref{l:mixingness}, as it was done in~\cite[\S8.2]{PrzRiv07}.
In the general case one could also apply directly the generalization of Young's result given in~\cite[Th{\'e}or{\`e}me~2.3.6 and Remarque~2.3.7]{gouezelthesis}.
We omit the standard details.
\subsection{Analyticity of the pressure function}\label{ss:lifting analyticity}
By hypothesis for each~$t$ close to~$t_0$ we have $\pressure(t, P(t)) = 0$.
Since the function~$\pressure$ is real analytic on a neighborhood of $(t_0, P(t_0))$ (Lemma~\ref{l:pressure}), by the implicit function theorem it is enough to check that $\frac{\partial}{\partial p} \pressure|_{(t_0, P(t_0))} \neq 0$.
By part~1 of Lemma~\ref{l:tail estimate} and~\cite[Proposition~2.6.13]{MauUrb03} this last number is equal to the integral of the (strictly negative) function~$- m$ against~$\rho$, see also~Remark~\ref{r:using symbolic}.
It is therefore strictly negative.

\section{Whitney decomposition of a pull-back}\label{s:Whitney decomposition}
The purpose of this section is to introduce a Whitney type decomposition of a given pull-back of a pleasant couple.
It is used to prove the key estimates in the next section.
\subsection{Dyadic squares}\label{ss:dyadic squares}
Fix a square root~$i$ of~$-1$ in~$\C$ and identify~$\C$ with $\R \oplus i \R$.
For integers $j, k$ and $\ell$, the set
$$ \left\{ x + i y \mid x \in \left[ \tfrac{j}{2^\ell}, \tfrac{j + 1}{2^\ell} \right], y \in \left[ \tfrac{k}{2^\ell}, \tfrac{k + 1}{2^\ell} \right] \right\}, $$
will be called \emph{dyadic square}.
Note that two dyadic squares are either nested or have disjoint interiors.
We define \emph{a quarter} of a dyadic square~$Q$ as one of the four dyadic squares contained in~$Q$ and whose side length is one half of that of~$Q$.

Given a dyadic square~$Q$, denote by~$\hQ$ the open square having the same center as~$Q$, sides parallel to that of~$Q$, and length twice as that of~$Q$.
Note in particular that for each dyadic square~$Q$ the set~$\hQ \setminus Q$ is an annulus whose modulus is independent of~$Q$; we denote this number by~$\mconst_1$.

\subsection{Primitive squares}\label{ss:primitive squares}
Let~$f$ be a rational map of degree at least two.
We fix $r_1 > 0$ sufficiently small so that for each critical value~$v$ of~$f$ in the Julia set of~$f$ there is a univalent map $\varphi_v : B(v, 9r_1) \to \C$ whose distortion is bounded by~2.

We say that a subset~$Q$ of~$\CC$ is a \emph{primitive square}, if there is~$v \in \CVJ$ such that~$Q$ is contained in the domain of~$\varphi_v$, such that~$\varphi_v(Q)$ is a dyadic square, and such that $\widehat{\varphi_v(Q)}$ is contained in the image of~$\varphi_v$.
In this case we put $v(Q) \= v$ and $\hQ \= \varphi_v^{-1}\left( \widehat{\varphi_v(Q)} \right)$.
We say that a primitive square~$Q_0$ is \emph{a quarter} of a primitive square~$Q$, if~$Q_0 \subset Q$ and if~$\varphi_{v(Q)}(Q_0)$ is a quarter of~$\varphi_{v(Q)}(Q)$.
Note that each primitive square has precisely four quarters.
Furthermore, each primitive square~$Q$ contained in~$B(\CV, r_1)$ is contained in a primitive square~$Q'$ such that~$Q$ is a quarter of~$Q'$.
\begin{defi}
Fix~$\Delta \in (0, r_1)$.
The \emph{Whitney decomposition} associated to (the complement of) a subset~$F$ of~$\CC$ is the collection~$\sW(F)$ of all those primitive squares~$Q$ such that $\diam(Q) < \Delta$, $\hQ \cap F = \emptyset$, and that are maximal with these properties.  
\end{defi}
By definition two distinct elements of~$\sW(F)$ have disjoint interiors, and each point in~$B(\CVJ, 9r_1) \setminus F$ is contained in an element of~$\sW(F)$.
\begin{lemm}\label{l:Whitney of finite}
Let~$\Delta \in (0, r_1)$, and let~$F$ be a finite subset of~$\CC$.
Then the following properties hold.
\begin{enumerate}
\item[1.]
Let~$Q_0$ be a primitive square contained in~$B(\CVJ, r_1)$ and such that $\diam(Q_0) \le \Delta$.
Then either~$Q_0$ is contained in an element of~$\sW(F)$, or it contains an element~$Q$ of~$\sW(F)$ such that
$$ \diam(Q) \ge \tfrac{1}{4} (2 + 3 \sqrt{\# F})^{-1} \diam(Q_0). $$
\item[2.]
For each~$n \ge 2$ the number of those~$Q \in \sW(F)$ contained in~$B(\CVJ, r_1)$ and such that $\diam(Q) \in [2^{ -(n + 1)} \Delta, 2^{-n} \Delta]$ is less than~$2599 (\# F)$.
\end{enumerate}
\end{lemm}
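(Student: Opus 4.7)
The plan is to work in the chart $\varphi = \varphi_{v(Q_0)}$, in which $Q_0$ corresponds to a dyadic square $\square_0$; every dyadic subsquare of $\square_0$ lifts to a primitive square contained in $Q_0$, because its hat is smaller than $\hat{Q}_0$ and hence remains inside the image of $\varphi$.

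For Part~1, if $\hat{Q}_0 \cap F = \emptyset$ then $Q_0$ itself satisfies the defining conditions of $\sW(F)$ (up to replacing $Q_0$ by one of its quarters in the edge case $\diam(Q_0) = \Delta$), so by maximality it is contained in an element of $\sW(F)$. Otherwise, I would subdivide $\square_0$ dyadically: at generation $k$ the $4^k$ subsquares are all primitive squares, and since the hat of each is a square of side twice its own, an elementary $3 \times 3$ lattice estimate shows that each fixed $p \in F$ lies in the hat of at most nine generation-$k$ subsquares. Hence at most $9\,\#F$ subsquares are blocked at generation $k$, and as soon as $4^k > 9\,\#F$ there is a generation-$k$ subsquare $Q$ with $\hat{Q} \cap F = \emptyset$. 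By maximality $Q$ is contained in a Whitney square, which must itself sit inside $Q_0$ since every strict dyadic ancestor of $Q$ inside $\square_0$ has a hat meeting $F$ by assumption. Choosing $k$ minimal gives $2^k \le 6\sqrt{\#F}$, hence $\diam(Q) \ge \diam(Q_0)/(6\sqrt{\#F})$; absorbing the bounded distortion of $\varphi$ and the small-$\#F$ edge case into the constants produces the stated lower bound $\tfrac{1}{4}(2 + 3\sqrt{\#F})^{-1}\diam(Q_0)$.

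For Part~2, each $Q \in \sW(F)$ with $\diam(Q) \in [2^{-(n+1)}\Delta, 2^{-n}\Delta]$ and $n \ge 2$ is a quarter of a primitive parent square $Q^{\mathrm{par}}$ of twice the diameter, which is still of diameter strictly less than $\Delta$ (using $n \ge 2$). By maximality of $Q$ in $\sW(F)$ the parent $Q^{\mathrm{par}}$ cannot satisfy the Whitney condition, and the only obstruction available is $\hat{Q}^{\mathrm{par}} \cap F \ne \emptyset$; so each such $Q$ is one of the four quarters of some primitive square whose hat meets $F$. Fixing $p \in F$, the number of primitive squares of diameter in $[2^{-n}\Delta, 2^{-(n-1)}\Delta]$ whose hat contains $p$ is bounded by a universal constant: in any one chart the $3 \times 3$ lattice argument again applies (after noting that the bounded distortion of $\varphi$ spreads the relevant dyadic range over only finitely many scales), and the number of charts in which $p$ can even appear is bounded by the geometry near the critical values. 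Multiplying this universal constant by four (for the quarters) and by $\#F$ gives the count less than $2599\,\#F$ after tracking the explicit constants.

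The principal obstacle is the book-keeping in Part~2: primitive squares carry an implicit choice of chart, so one must pick a canonical representative for each $Q$ to avoid double counting, and must then track explicit constants carefully, in particular the number of dyadic scales that lie inside a factor-two diameter range once the bounded distortion of $\varphi$ is absorbed, in order to arrive at the asserted numerical constant $2599$.
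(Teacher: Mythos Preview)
Your Part~1 follows the same idea as the paper: pass to the dyadic model, subdivide, and use the $3\times 3$ lattice count to find a subsquare whose hat misses~$F$. The paper restricts to the $(2^n-2)^2$ \emph{interior} subsquares (so their hats stay inside~$Q_0'$ and one only needs to consider $F\cap Q_0$), while you use all $4^k$ subsquares and count against all of~$F$; either bookkeeping works. One small correction: your justification that the resulting Whitney square sits inside~$Q_0$ (``every strict dyadic ancestor of $Q$ inside $\square_0$ has a hat meeting~$F$ by assumption'') is not what you assumed. What you actually know in the ``Otherwise'' case is only $\hat Q_0\cap F\neq\emptyset$. The correct argument is that for nested dyadic squares $Q_0'\subset P'$ one has $\hat Q_0'\subset\hat P'$, so any primitive square (from the same chart) strictly containing~$Q_0$ also has its hat meeting~$F$ and hence cannot be in~$\sW(F)$; thus the Whitney square containing your small~$Q$ is contained in~$Q_0$.

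For Part~2 your route diverges from the paper's. You propose to count \emph{parent} squares whose hat contains a fixed $p\in F$ via the $3\times 3$ lattice argument, then multiply by four and by~$\#F$; you correctly flag that this forces you to track how many dyadic scales and how many charts can contribute. The paper instead uses a packing argument that sidesteps all of this: from $\hat{Q'}\ni a\in F$ one gets $\dist(Q,a)\le 4\diam(Q)$, hence $Q\subset B(a,5\cdot 2^{-n}\Delta)$; since distinct elements of~$\sW(F)$ have pairwise disjoint interiors and each has area at least $\tfrac18\diam(Q)^2\ge\tfrac{1}{32}4^{-n}\Delta^2$, at most $25\cdot 32\,\pi\,(\#F)<2599\,(\#F)$ of them fit in the union of these balls. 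This area comparison is chart-agnostic and needs no scale bookkeeping, which is exactly the obstacle you identified; it also explains where the specific constant~$2599$ comes from. Your lattice approach is not wrong and would in fact yield a smaller constant if carried out carefully, but the paper's packing argument is the cleaner way to reach the stated bound.
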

\begin{proof}
\

\partn{1}
Let $n \ge 2$ be the least integer such that~$(2^n - 2)^2 > 9 (\# F)$, so that $2^n \le 2 \left( 2 + 3 \sqrt{\# F} \right)$.
Put $Q_0' \= \varphi_{v(Q)}(Q_0)$ and denote by~$\ell_0$ the side length of~$Q_0'$.
For each element~$a$ of~$F$ in~$Q_0$ choose a dyadic square~$Q_a$ whose side length equal to~$2^{-n} \ell_0$ and that contains~$\varphi_{v(Q)}(a)$.
As there are $(2^n - 2)^2$ squares of side length equal to~$2^{-n} \ell_0$ contained in the interior of~$Q_0'$, and at most $9 (\#F) < (2^n - 2)^2$ of them intersect one of the squares~$\bigcup_{a \in F} Q_a$, we conclude that there is at least one square~$Q'$ of side length equal to~$2^{-n} \ell_0$ that is contained in the interior~$Q_0'$ and such that $\varphi_{v(Q)}^{-1} \left( \widehat{Q'} \right)$ is disjoint from~$F$.
It follows that the primitive square $Q \= \varphi_{v(Q)}^{-1}(Q')$ is contained in an element of~$\sW(F)$.
As,
$$ \diam(Q) \ge \tfrac{1}{2} 2^{-n} \diam(Q_0) \ge \tfrac{1}{4} (2 + 3 \# \sqrt{F})^{-1} \diam(Q_0), $$
the desired assertion follows.

\partn{2}
Let~$Q$ be an element of~$\sW(F)$ contained in~$B(\CVJ, r_1)$, and let~$Q'$ be a primitive square such that~$Q$ is a quarter of~$Q'$.
Then either~$\diam(Q') > \Delta$ or $\widehat{Q'}$ intersects~$F$.
So, if $\diam(Q) \le \tfrac{1}{4} \Delta$, then there is~$a \in F$ contained in $\widehat{Q'}$, and therefore $\diam(Q) \ge \tfrac{1}{4} \dist(Q, a)$.
So, if we let~$n \ge 2$ be an integer such that $\diam(Q) \in [2^{- (n + 1)} \Delta, 2^{-n} \Delta]$, then $Q \subset B(a, 5 \cdot 2^{-n} \Delta)$.
Since the area of~$Q$ is greater than or equal to~$\tfrac{1}{8} \diam(Q)^2 \ge \tfrac{1}{32} 4^{-n} \Delta^2$ and the area of $B(a, 5 \cdot 2^{-n} \Delta)$ is less than $25 \pi 4^{-n} \Delta^2$, we conclude that there are at most $ 25 \cdot 32 \pi (\#F) < 2599 (\# F)$ elements~$Q$ of~$\sW(F)$ satisfying $\diam(Q) \in [2^{- (n + 1)} \Delta, 2^{-n} \Delta]$.
\end{proof}

\subsection{Univalent squares}\label{ss:univalent squares}
For an integer~$n \ge 0$ we will say that a subset~$Q$ of~$\CC$ is \emph{a univalent square of order~$n$}, if there is a primitive square~$Q'$ such that~$Q$ is a connected component of~$f^{- (n + 1)}(Q')$, and such that~$f^{n + 1}$ is univalent on the connected component of~$f^{- (n + 1)}(\widehat{Q'})$ containing~$Q$.
In this case we denote this last set by~$\hQ$, and note that $\hQ \setminus Q$ is an annulus of modulus equal to~$\mconst_1$.
It thus follows that there is a constant $K_0 > 1$ such that for every univalent square~$Q$ of order~$n$ and every~$j = 1, \ldots, n + 1$, the distortion of~$f^{j}$ on~$Q$ is bounded by~$K_0$.

Let $(\hV, V)$ be a pleasant couple for~$f$ such that $f(\hV) \subset B(\CVJ, r_1)$.
For a pull-back~$\badp$ of~$\hV$, denote by~$\ell(\badp)$ the number of those~$j \in \{0, \ldots, m_{\badp} \}$ such that $f^j(\badp) \subset \hV$.
Moreover, let~$\sW (\badp)$ be the collection of all those univalent squares~$Q$ that are of order~$m_{\badp}$, such that~$\hQ \subset \badp$, such that~$f^{m_{\badp}}(Q)$ intersects~$V$, and that are maximal with these properties.
Note that for~$Q \in \sW(\badp)$ we have $v(Q) = f(c(\badp))$.
By definition every pair of distinct elements of~$\sW(\badp)$ have disjoint interiors.
On the other hand, every point in~$f^{m_{\badp}}|_{\badp}^{-1} \left( V^{c(\badp)} \right) \setminus \Crit(f^{m_{\badp} + 1})$ is contained in a an element of~$\sW(\badp)$, and for each~$Q \in \sW(\badp)$ the set~$\hQ$ is disjoint from~$\Crit(f^{m_{\badp} + 1})$.

\begin{prop}\label{p:backward Whitney}
Let~$f$ be a rational map of degree at least two and let~$(\hV, V)$ be a pleasant couple for~$f$.
Then there is a constant~$C_0 > 0$ such that for every $\xi \in (0, 1)$ the number of those~$Q \in \sW(\badp)$ such that
$$ \diam \left( f^{m_{\badp} + 1}(Q) \right)
\ge
\xi \diam(f(\hV^{c(\badp)})) $$
is less than
$$ 2600 \deg(f)^{\ell(\badp)} \left (C_0 + \tfrac{1}{2} \ell(\badp) \log_2 \ell(\badp) + \ell(\badp) \log_2 (\xi^{-1}) \right). $$
\end{prop}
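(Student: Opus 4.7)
The plan is to push forward by $f^{m_\badp+1}$ into the chart $\varphi_{f(c)}$, with $c = c(\badp)$, and reduce the problem to a Whitney-counting estimate in $f(\hV^c)$. To begin, let $F \subset f(\hV^c)$ be the set of critical values of the branched covering $f^{m_\badp+1}|_\badp \colon \badp \to f(\hV^c)$; each such value is of the form $f^{m_\badp+1-j}(c^*)$ with $c^* \in \Crit(f) \cap f^j(\badp)$ for some $j \in \{0, \ldots, m_\badp\}$. Combining properties~\puno{} and~\pdos{} of pleasant couples, one verifies that $f^j(\badp) \cap \Crit(f)$ can be non-empty only when $f^j(\badp) \subset \hV$, and in that case it consists of a single critical point (the unique critical point of the component of $\hV$ containing $f^j(\badp)$). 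Hence $|F| \le \ell(\badp)$.

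Next I would identify $\{f^{m_\badp+1}(Q) : Q \in \sW(\badp)\}$, up to boundary effects, with a sub-family of the Whitney decomposition $\sW(F)$ in $f(\hV^c)$, for a scale cutoff $\Delta$ comparable to $\diam(f(\hV^c))$. The maximality defining $\sW(\badp)$ translates through the branched cover into the Whitney maximality of $Q' \= f^{m_\badp+1}(Q)$: the obstruction to enlarging $Q'$ to its parent primitive square $Q''$ is that either $\widehat{Q''} \cap F \neq \emptyset$ (destroying univalence of the pull-back), or $\diam(Q'') > \Delta$, or $\widehat{Q''}$ exits $f(\hV^c)$. Simultaneously, by multiplicativity of degrees of branched coverings,
\[
\deg\bigl(f^{m_\badp+1}|_\badp\bigr) \;=\; \prod_{j=0}^{m_\badp} \deg\bigl(f|_{f^j(\badp)}\bigr),
\]
where each factor is $1$ unless $f^j(\badp) \subset \hV$ (in which case it is at most $\deg(f)$, again by~\pdos{}); so the total degree is at most $\deg(f)^{\ell(\badp)}$, and each $Q' \in \sW(F)$ lifts to at most $\deg(f)^{\ell(\badp)}$ elements of $\sW(\badp)$.

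To finish, I would apply Lemma~\ref{l:Whitney of finite}: summing part~2 over the $O(\log_2 \xi^{-1})$ dyadic scales in $[\xi\Delta, \Delta]$ contributes at most $2599\,\ell(\badp)\log_2 \xi^{-1}$ squares, while part~1 applied to the large-scale range (sizes above $\sim \ell(\badp)^{-1/2} \Delta$, where each parent primitive square in $f(\hV^c)$ must contain an element of $\sW(F)$ of comparable size) yields the residual $\tfrac{1}{2} \ell(\badp) \log_2 \ell(\badp) + C_0$ term. Multiplying by the lifting bound $\deg(f)^{\ell(\badp)}$ and absorbing the factor $2599$ into the constant $2600$ gives the stated estimate. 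The main obstacle lies in step two: formalizing the translation of maximality from $\sW(\badp)$ to $\sW(F)$ across a branched cover of potentially high local degree, while carefully handling squares near $\partial f(\hV^c)$ and the bounded distortion of $\varphi_{f(c)}$, so that dyadic scales on the two sides correspond.
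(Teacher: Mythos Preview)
Your overall architecture is the paper's: push forward by $f^{m_\badp+1}$, set $F$ equal to the critical-value set of the branched cover $f^{m_\badp+1}|_\badp$, note $|F|\le\ell(\badp)$, bound the multiplicity of the lifting by $\deg(f)^{\ell(\badp)}$, and count Whitney squares via Lemma~\ref{l:Whitney of finite}. Those pieces are correct.

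The gap is precisely the step you flagged, and it is not just a technicality. You argue that maximality of $Q\in\sW(\badp)$ forces the parent $Q''$ of $Q':=f^{m_\badp+1}(Q)$ to satisfy $\widehat{Q''}\cap F\neq\emptyset$ (or a boundary/size obstruction), hence $Q'\in\sW(F)$. The maximality direction is essentially right, but you are missing the \emph{other} requirement: nothing guarantees $\widehat{Q'}\cap F=\emptyset$. Univalence of $f^{m_\badp+1}$ on $\hQ$ only says $\hQ$ contains no critical point; a critical value can sit in $\widehat{Q'}=f^{m_\badp+1}(\hQ)$ while the corresponding critical point lies on a different sheet of the cover. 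When that happens $Q'$ is strictly too large to belong to $\sW(F)$, and your identification $\{f^{m_\badp+1}(Q)\}\subset\sW(F)$ fails. (Relatedly, your parenthetical ``$\widehat{Q''}\cap F\neq\emptyset$ destroys univalence of the pull-back'' is inaccurate for the same reason: it need not destroy univalence of the \emph{particular} sheet through $Q$.)

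The paper's fix is to abandon identification in favour of \emph{containment}. Using part~1 of Lemma~\ref{l:Whitney of finite} (applied to a primitive sub-square $Q_0\subset f^{m_\badp+1}(Q)$ of size $\sim\Delta$) together with the maximality of $Q$ to exclude the ``$Q'\supset f^{m_\badp+1}(Q)$'' alternative, one finds $Q'\in\sW(F)$ with
\[
Q'\subset f^{m_\badp+1}(Q),\qquad \diam(Q')\;\gtrsim\;\ell(\badp)^{-1/2}\,\diam\!\big(f^{m_\badp+1}(Q)\big).
\]
The assignment $Q\mapsto Q'$ is then at most $\deg(f)^{\ell(\badp)}$-to-one, and one counts the relevant $Q$'s by counting the $Q'$'s via part~2 of Lemma~\ref{l:Whitney of finite}. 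The $\sqrt{\ell(\badp)}$ loss costs $\tfrac12\log_2\ell(\badp)$ extra dyadic scales, which multiplied by the $2599\,\ell(\badp)$ squares per scale is the exact source of the $\tfrac12\ell(\badp)\log_2\ell(\badp)$ term. Your account of that term (``part~1 applied to the large-scale range'') does not produce it; part~1 is used once, to set up the containment, not to count large squares directly.
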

\begin{proof}
Put~$c = c(\badp)$ and $v = f(c)$, and let~$\xi_0 \in (0, 1)$ be sufficiently small so that for each~$z \in V^c$ the connected component of~$f^{-1}\left(B ( f(z), \xi_0 \diam(f( \hV^c))) \right)$ containing~$z$ is contained in~$\hV^c$.
Put~$F = f^{m_{\badp} + 1} \left( \badp \cap \Crit \left(f^{m_{\badp} + 1} \right) \right)$, $\Delta \= \xi_0 \diam(f( \hV^c ))$ and consider the Whitney decomposition~$\sW(F)$, as defined in~\S\ref{ss:primitive squares}.
Note that~$\# F \le \ell(\badp)$.

\partn{1}
We prove first that for every $Q \in \sW(\badp)$ the primitive square $f^{m_{\badp} + 1}(Q) \subset f( \hV^c ) \subset B(v, r_1)$ contains an element~$Q'$ of~$\sW(F)$ such that
$$ \diam(Q') \ge \left( 80 \sqrt{\# \ell(\badp)} \right)^{-1} \xi_0 \diam \left( f^{m_{\badp} + 1}(Q) \right).$$
Let~$Q_0$ be a primitive square contained in~$f^{m_{\badp} + 1}(Q)$ such that
$$ \tfrac{1}{4} \xi_0 \diam(f^{m_{\badp} + 1}(Q)) \le \diam(Q_0) \le \Delta. $$
By part~1 of Lemma~\ref{l:Whitney of finite} there is an element~$Q'$ of~$\sW(F)$ that either contains~$Q_0$, or that it is contained in~$Q_0$ and
\begin{multline*}
\diam(Q')
\ge
\tfrac{1}{4} \left( 2 + 3 \sqrt{\# F} \right)^{-1} \diam(Q_0)
\\ \ge
\tfrac{1}{16} \left( 2 + 3 \sqrt{\# F} \right)^{-1} \xi_0 \diam \left( f^{m_W + 1}(Q) \right). 
\end{multline*}
As $\# F \le \ell(\badp)$ and~$\ell(\badp) \ge 1$, we just need to show that~$Q'$ is in fact contained in~$f^{m_{\badp} + 1}(Q)$.
Suppose by contradiction that this is not the case.
Then it follows that~$Q'$ contains~$f^{m_{\badp} + 1}(Q)$ strictly.
Let~$\widetilde{Q'}$ be the connected component of~$f^{- (m_{\badp} + 1)}(Q')$ containing~$Q$.
By definition of~$\xi_0$ we have that $f^{m_{\badp}}(\widetilde{Q'})$ is contained in~$\hV^c$, so~$\widehat{\widetilde{Q'}}$ is contained in~$\badp$.
On the other hand~$f^{m_{\badp}}(\widetilde{Q'})$ intersects~$V^c$, because it contains~$f^{m_{\badp}}(Q)$ and this set intersects~$V^c$.
As by definition of~$\sW(F)$ the set~$\widehat{Q'}$ is disjoint from~$F$, it follows that~$f^{m_{\badp}  + 1}$ is univalent on~$\widehat{\widetilde{Q'}}$.
Thus, by definition of~$\sW(\badp)$, the univalent square~$\widetilde{Q'}$ is contained in an element of~$\sW(\badp)$.
But $Q \in \sW(\badp)$ is strictly contained in~$\widetilde{Q'}$, so we get a contradiction.
This shows that~$Q'$ is in fact contained in~$f^{m_{\badp} + 1}(Q)$ and completes the proof of the assertion.

\partn{2}
For each~$Q \in \sW(\badp)$ choose an element~$Q'$ of~$\sW(F)$ satisfying the property described in part~1.
Note that for each~$Q_0' \in \sW(F)$ the number of those~$Q \in \sW(\badp)$ such that $Q' = Q_0'$ is less than or equal to~$\deg(f)^{\ell(\badp)}$. 
As the area of a primitive square~$Q'$ is greater than or equal to~$\tfrac{1}{8} \diam(Q')^2$, it follows that for each~$\xi \in (0, 1)$ the number of those~$Q \in \sW(\badp)$ satisfying $ \diam (Q') \ge \xi \diam ( f( \hV^c ) )$ is less than or equal to~$8 \pi \xi^{-2} \deg(f)^{\ell(\badp)}$.

Let~$\xi \in (0, \tfrac{1}{4} \xi_0)$ be given and let~$n_0$ be the least integer $n \ge 2$ such that~$\xi \ge 2^{-n} 80 \sqrt{\ell(\badp)}$, so that $\xi < 2^{-(n_0 - 1)} 80 \sqrt{\ell(\badp)}$.
If~$Q \in \sW(\badp)$ is such that $\diam \left( f^{m_{\badp} + 1}(Q) \right) \ge \xi \diam(\hV^c)$, then we have
$$ \diam(Q')
\ge
\left( 80 \sqrt{\ell(\badp)} \right)^{-1} \xi_0 \diam \left( f^{m_{\badp} + 1}(Q) \right)
\ge 2^{-n_0} \xi_0 \diam ( \hV^c). $$
So part~2 of Lemma~\ref{l:Whitney of finite} implies that for each~$n \ge 2$ the number of those~$Q \in \sW(\badp)$ such that
$$ \diam(Q') \in \left[ 2^{- (n + 1)} \xi_0 \diam ( f( \hV^c )), 2^{-n} \xi_0 \diam ( f( \hV^c )) \right], $$
is less than~$2599 (\# F) \deg(f)^{\ell(\badp)} \le 2599 \ell(\badp)\deg(f)^{\ell(\badp)}$.
So we conclude that the number of those~$Q \in \sW(\badp)$ such that $\diam \left( f^{m_{\badp} + 1}(Q) \right) \ge \xi \diam(\hV^c)$ is less than
\begin{multline*}
\deg(f)^{\ell(\badp)} \left( 8 \pi (\tfrac{1}{4} \xi_0)^{-2}  + (n_0 - 2) 2599 \ell(\badp) \right)
\\ \le
\deg(f)^{\ell(\badp)} \left( 8 \pi (\tfrac{1}{4} \xi_0)^{-2} + 2599 \ell(\badp) \left(\log_2 (\xi^{-1}) + \log_2(80) + \tfrac{1}{2} \log_2 (\ell(\badp)) \right) \right).
\end{multline*}
This completes the proof of the lemma.
\end{proof}

\section{The contribution of a pull-back}\label{s:pull-back contribution}
Fix a rational map~$f$ of degree at least two, and a pleasant couple $(\hV, V)$ for~$f$.
Recall that~$\fL_V$ is the collection of all the connected components of~$\CC \setminus K(V)$ and that for a pull-back~$\badp$ of~$\hV$ we denote by~$\fD_{\badp}$ the collection of all the pull-backs~$W$ of~$V$ that are contained in~$\badp$, such that~$f^{m_{\badp}}(W) \subset V$, such that~$f^{m_{\badp} + 1}$ is univalent on~$\hW$ and such that $f^{m_{\badp} + 1}(W) \in \fL_V$; see~\S\ref{ss:bad pull-backs}.
Furthermore, we denote by~$\ell(\badp)$ the number of those~$j \in \{ 0, \ldots, m_{\badp} \}$ such that $f^j(\badp) \subset \hV$.

The purpose of this section is to prove the following.
\begin{prop}[Key estimates]\label{p:pull-back contribution}
Let~$f$ be a rational map of degree at least two that is expanding away from critical points.
Then for each sufficiently small pleasant couple~$(\hV, V)$ for~$f$ the following properties hold.
\begin{enumerate}
\item[1.]
For every $t_0 \in \R$, and every~$(t, p) \in \R^2$ sufficiently close to~$(t_0,  P(t_0))$, we have
\begin{equation}\label{e:pressure outside}
\sum_{W \in \fL_V} \exp( - p m_W) \diam(W)^{t} < + \infty.
\end{equation}
\item[2.]
Let~$t, p \in \R$ be such that~\eqref{e:pressure outside} holds and such that
$$p > \max \{ - t \chiinf, - t \chisup \}. $$
Then for every $\varepsilon > 0$ such that
$$ |t| \varepsilon < p - \max \{ - t \chiinf, - t \chisup \}, $$
there is a constant~$C_1 > 0$ such that for each pull-back~$\badp$ of~$\hV$ we have
\begin{multline*}
\sum_{W \in \fD_{\badp}} \exp( - p m_W) \diam(W)^{t}
\\ \le
C_1 (\deg(f) + 1)^{\ell(\badp)}\exp \left( - m_{\badp} (p - \max \{ - t \chiinf, - t \chisup \} - |t| \varepsilon) \right).
\end{multline*}
\end{enumerate}
\end{prop}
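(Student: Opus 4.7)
The plan is to prove Part 1 first and then use it as an internal ingredient for Part 2; the unifying tool is Koebe's distortion theorem applied on the enlargements $\hW$ and $\hQ$. For \textbf{Part 1}, every $W \in \fL_V$ is a univalent pull-back of a component $V^{c(W)}$, so Koebe on $\hW$ (whose modulus around $W$ is definite, via the pleasant-couple condition) gives $\diam(W)^t \asymp |(f^{m_W})'(z_W)|^{-t}$ for any $z_W \in W$. Fixing a basepoint $z_0 \in V$ and setting $z_W = f^{m_W}|_W^{-1}(z_0)$, the target sum is comparable to
\[
\sum_{n \ge 1} \exp(-pn) \sum_{w \in A_n(z_0)} |(f^n)'(w)|^{-t},
\]
where $A_n(z_0) \subset f^{-n}(z_0)$ consists of those preimages whose forward orbit first visits $V$ at time $n$. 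Since $A_n(z_0) \subset f^{-n}(z_0)$ and the full tree pressure~$\Lambda_n(z_0,t)$ has exponential rate $P(t)$ by~\S\ref{ss:pressure properties}, the first-entry sum has exponential rate at most $P(t_0)$ at $t = t_0$. To get convergence in a two-dimensional neighborhood of $(t_0, P(t_0))$ (in particular for $p$ slightly smaller than $P(t_0)$), I plan to argue by a renewal-type decomposition of the tree sum as a geometric series in the first-entry sum: once $(\hV,V)$ is small enough, the first-entry sum at $p = P(t_0)$ is strictly less than~$1$, yielding a strict rate inequality and hence convergence in a neighborhood.

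For \textbf{Part 2}, given a pull-back $Y$ of $\hV$ I would decompose $\fD_Y$ using the Whitney covering $\sW(Y)$ of~\S\ref{s:Whitney decomposition}: each $Q \in \sW(Y)$ has $f^{m_Y+1}$ univalent on $\hQ$ with distortion bounded by $K_0$, and the $W \in \fD_Y$ contained in $Q$ are in bijection with $W' \in \fL_V$ contained in $f^{m_Y+1}(Q)$ via $W' = f^{m_Y+1}(W)$ and $m_W = m_Y + 1 + m_{W'}$. Koebe on $\hQ$ then yields
\[
\sum_{\substack{W \in \fD_Y\\ W \subset Q}} \exp(-pm_W)\diam(W)^t \lesssim \exp(-p(m_Y+1))\,|(f^{m_Y+1})'(z_Q)|^{-t}\, S(Q),
\]
where $S(Q) \= \sum_{W' \in \fL_V,\, W' \subset f^{m_Y+1}(Q)} \exp(-pm_{W'})\diam(W')^t$ is bounded, uniformly in $Q$, by Part 1. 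It then remains to sum the outer factors over $Q \in \sW(Y)$; I plan to group the squares dyadically by $\diam(f^{m_Y+1}(Q))$, use Proposition~\ref{p:backward Whitney} to count squares at each dyadic scale (this is where the factor $\deg(f)^{\ell(Y)}$ appears together with a mild logarithmic factor), and use parts~2 and~3 of Proposition~\ref{p:individual pressure} applied to a preimage of~$z_0$ lying in $\hQ$ to bound $|(f^{m_Y+1})'(z_Q)|^{-t}$ by $\exp(m_Y(\max\{-t\chiinf,-t\chisup\} + |t|\varepsilon))$ for $m_Y$ large. The dyadic geometric sum absorbs the Whitney counts into the constant $C_1$ and produces the claimed bound.

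The \textbf{main obstacle} will be the sign-sensitive handling of the derivative factor $|(f^{m_Y+1})'(z_Q)|^{-t}$: for $t > 0$ a \emph{lower} bound on this derivative is needed (driven by $\chiinf$), whereas for $t < 0$ an \emph{upper} bound is needed (driven by $\chisup$). The individual-pressure estimates of Proposition~\ref{p:individual pressure} are stated for preimages of generic points in $\CC \setminus E_\infty$, so to apply them inside each enlarged square $\hQ$ I have to locate a suitable preimage with the required $\varepsilon$-slack; this is precisely why the hypothesis $|t|\varepsilon < p - \max\{-t\chiinf, -t\chisup\}$ is imposed, and why small $m_Y$ values must be absorbed into $C_1$. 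A subsidiary difficulty, already visible in the Part~2 step, is controlling $S(Q)$ uniformly in the sign of $t$: for $t < 0$ a small $f^{m_Y+1}(Q)$ contains only larger pull-backs and $S(Q)$ should scale with $\diam(f^{m_Y+1}(Q))^t$, and combining this dependence with the Whitney counts of Proposition~\ref{p:backward Whitney} is what makes the dyadic regrouping delicate but, ultimately, sufficient.
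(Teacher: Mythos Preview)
Your overall architecture for Part~2 (Whitney squares, then dyadic regrouping via Proposition~\ref{p:backward Whitney}) matches the paper's, but both parts have a genuine gap at the same place: you never exploit the hypothesis that $f$ is \emph{expanding away from critical points}, and without it neither step closes.

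For Part~1, the renewal argument is circular. Writing the full tree sum as a geometric series in the first-entry sum only tells you that the first-entry sum at $(t_0,P(t_0))$ is $<1$ \emph{if} the full tree sum at $(t_0,P(t_0))$ is finite, which you do not know (the tree pressure rate is exactly $P(t_0)$, so this is the borderline case). Shrinking $(\hV,V)$ does not help: it lengthens first-entry times but the summand at time~$n$ is still only bounded by $\exp(-P(t_0)n)\Lambda_n(z_0,t_0)\asymp 1$. The paper instead chooses a smaller neighborhood $V'\subset V$ so that the maximal invariant set $K'=\{z:\forall n,\ f^n(z)\notin V'\}$ meets every $V^c$, picks basepoints $z(c)\in K'\cap V^c$, and observes that the marked preimages $z_W\in W$ for $W\in\fL_V$ all lie in $K'$. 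Lemma~\ref{l:pressure outside} (which uses expanding-away-from-critical-points) then gives the \emph{strict} inequality $P(f|_{K'},-t_0\ln|f'|)<P(t_0)$, and that strictness is exactly what produces convergence for some $p<P(t_0)$.

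For Part~2, your per-square estimate is too coarse to sum. Bounding $S(Q)$ uniformly by the Part~1 constant and $|(f^{m_\badp+1})'(z_Q)|^{-t}$ by a quantity depending only on $m_\badp$ gives, for each $Q\in\sW(\badp)$, a contribution $\asymp \gamma^{m_\badp}$ that does not depend on the scale of $Q$; since $\sW(\badp)$ is infinite and Proposition~\ref{p:backward Whitney} only bounds the count \emph{above} a fixed scale, the sum over all scales diverges. What is needed is a scale-dependent bound of the form $\sum_{z_W\in Q}\exp(-pm_W)\diam(W)^t\le C\,\exp(-p\,n_Q)\diam(Q)^t$ (with $n_Q$ as in Lemma~\ref{l:size versus time}), so that after Lemma~\ref{l:size versus time} the contribution of $Q$ is $\lesssim\gamma^{n_Q}=\gamma^{m_\badp+1}\gamma^{n_{Q'}}$ with $Q'=f^{m_\badp+1}(Q)$, and the dyadic sum $\sum_Q\gamma^{n_{Q'}}$ converges against the Whitney count. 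The paper obtains this scale-dependent bound by a second use of the expansion on $K'$: since $Q'$ contains a point of $K'$ (namely $z_{W'}$ for some $W'\in\fL_V$), one can iterate $Q'$ forward univalently $n_{Q'}$ steps up to definite scale and then invoke Part~1 there. Also note that grouping by ``$W\subset Q$'' is not quite right (a single $W$ need not sit inside one Whitney square); the paper groups by ``$z_W\in Q$'' and proves separately that $\diam(f^{m_\badp+1}(W))$ is comparable to $\diam(Q')$ whenever $z_W\in Q$.
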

The prove this proposition we start with the following lemma.
\begin{lemm}\label{l:pressure outside}
Let~$f$ be a rational map that is expanding away from critical points.
Then for every compact and forward invariant subset $K$ of the Julia set of~$f$ that is disjoint from the critical points of~$f$ and every $t > 0$ we have
$$
P(f|_K, -t \ln |f'|) < P(t).
$$
\end{lemm}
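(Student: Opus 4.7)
The plan is to argue by contradiction, exploiting the rigidity of $(t, P(t))$-conformal measures on the conical Julia set given by Proposition~\ref{p:conical conformal}. We may assume $\Crit(f) \cap J(f) \neq \emptyset$, as otherwise this fact is excluded by the standing convention of the paper (\S 1.4), and moreover if there were no critical points in $J(f)$ the lemma would fail for $K = J(f)$.

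Since $K$ is compact, contained in $J(f)$, and disjoint from $\Crit(f)$, I would fix a neighborhood $V'$ of $\Crit(f) \cap J(f)$ disjoint from $K$. The forward invariance of $K$ then yields $K \subset \{ z \in J(f) : f^n(z) \notin V' \text{ for all } n \ge 0 \}$, so the hypothesis that $f$ is expanding away from critical points gives that $f|_K$ is uniformly expanding, and in particular $|f'|$ is bounded and bounded away from zero on $K$. Consequently $-t\ln|f'|$ is H\"older continuous on $K$, $P(f|_K, -t\ln|f'|)$ is finite, and the variational principle gives $P(f|_K, -t\ln|f'|) \le P(t)$.

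Suppose for contradiction that equality holds. The standard thermodynamic formalism for uniformly expanding systems produces a $(t, P(t))$-conformal measure $\nu_K$ for $f|_K$ supported on $K$. I would then verify that $\nu_K$ is $(t, P(t))$-sub-conformal for $f$ acting on~$\CC$: for every Borel set $U \subset \CC$ on which $f$ is injective, the forward invariance of $K$ gives $f(U \cap K) \subset K$, hence
\[
\nu_K(f(U)) \ge \nu_K(f(U \cap K)) = \exp(P(t)) \int_{U \cap K} |f'|^t \, d\nu_K = \exp(P(t)) \int_U |f'|^t \, d\nu_K,
\]
the middle equality using the conformality of $\nu_K$ for $f|_K$. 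I would also verify $K \subset \Jcon(f)$: uniform expansion on $K$, combined with the positive distance from $K$ to $\Crit(f)$, yields a uniform $\rho > 0$ such that for every $x \in K$ and $n \ge 1$ the pull-back of $B(f^n(x), \rho)$ to $x$ by $f^n$ is univalent, since the pull-back balls shrink geometrically along the backward orbit, which stays in a uniform neighborhood of the forward orbit of $x$ in $K$.

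Now since $t > 0 > \tneg$ by Proposition~\ref{p:asymptotes}, Proposition~\ref{p:conical conformal} applies to the $(t, P(t))$-sub-conformal measure $\nu_K$ supported on $\Jcon(f)$, forcing it to be in fact $(t, P(t))$-conformal for $f$. The locally eventually onto property of $f$ on $J(f)$ recalled just after the definition of conformal measures in~\S\ref{ss:conformal measures} then forces $\supp(\nu_K) = J(f)$, contradicting $\supp(\nu_K) \subset K$ and $K \subsetneq J(f)$ (the latter strict because $\Crit(f) \cap J(f) \ne \emptyset$ while $K \cap \Crit(f) = \emptyset$). The main delicate point is the verification $K \subset \Jcon(f)$, a Koebe-type argument combining uniform expansion with the positive distance from $K$ to the critical set; once this is in place, the rigidity statement of Proposition~\ref{p:conical conformal} does the heavy lifting.
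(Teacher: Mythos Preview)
Your argument is correct and takes a genuinely different route from the paper's proof. The paper argues directly: it first enlarges $K$ to a compact invariant hyperbolic set admitting a Markov partition (so that an equilibrium state $\mu$ for $f|_K$ with potential $-t\ln|f'|$ exists), and then enlarges once more to a set $K'$ in which $K$ has empty relative interior; uniqueness of equilibrium states for the Markov system $f|_{K'}$ then forces $P(f|_K,-t\ln|f'|) = h_\mu(f) - t\int\ln|f'|\,d\mu < P(f|_{K'},-t\ln|f'|) \le P(t)$. Your approach instead passes through conformal rather than equilibrium measures and invokes the global rigidity of Proposition~\ref{p:conical conformal}: a sub-conformal measure supported on the conical Julia set must be genuinely conformal for $f$, hence (by the locally eventually onto property) supported on all of $J(f)$, contradicting $\supp(\nu_K)\subset K\subsetneq J(f)$. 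The paper's route is more self-contained, staying entirely within the classical thermodynamic formalism of uniformly hyperbolic Markov systems; yours is more conceptual, explaining the strict inequality as the impossibility of a conformal measure for $f$ living on a proper compact invariant subset of $J(f)$.

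One point to tighten: the sentence ``standard thermodynamic formalism for uniformly expanding systems produces a $(t,P(t))$-conformal measure $\nu_K$ for $f|_K$'' needs a word of justification, since $K$ is merely compact and forward invariant and need not carry a Markov structure or even satisfy $f(K)=K$. The cleanest fix is exactly the paper's first enlargement (via~\cite[Theorem~3.5.2]{PUbook}): pass to a compact invariant hyperbolic set $\tilde K\supset K$, still disjoint from $\Crit(f)$, that does admit a Markov partition. Under your contradiction hypothesis $P(f|_{\tilde K},-t\ln|f'|)$ is squeezed between $P(f|_K,-t\ln|f'|)=P(t)$ and $P(t)$, so it equals $P(t)$, and on $\tilde K$ the Ruelle--Perron--Frobenius theory does furnish the required $(t,P(t))$-conformal measure. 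The remainder of your argument then goes through with $\tilde K$ in place of $K$.
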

\begin{proof}
By hypothesis~$f$ is uniformly expanding on~$K$.
Enlarging $K$ if necessary we may assume that the restriction of~$f$ to~$K$ admits a Markov partition, see \cite[Theorem~3.5.2 and Remark~3.5.3]{PUbook},\footnote{An analogous result in the case of diffeomorphisms is shown in~\cite{Fis06}.} so that there is at least one equilibrium state~$\mu$ for $f|_K$ with potential $-t \ln |f'|$.

We enlarge $K$ with more cylinders to obtain a compact forward
invariant subset $K'$ of $J(f)$, so that $f$ restricted to $K'$
admits a Markov partition and so that the relative interior of $K$
in $K'$ is empty. It follows that $\mu$ cannot be an equilibrium
measure for $f|_{K'}$ for the potential $-t \ln |f'|$, so we have
$$
P(f|_K, - t \ln |f'|) = h_\mu(f) - t \int_K \ln |f'| d \mu <
P(f|_{K'}, - t \ln |f'|) \le P(t).
$$
\end{proof}

To prove Proposition~\ref{p:pull-back contribution}, let~$f$ be a rational map of degree at least two, and let $(\hV, V)$ be a pleasant couple for~$f$.
We will define a constant~$r_0 > 0$ as follows.
If~$\chiinf = 0$ we put~$r_0 = \dist (\partial V, \CJ)$.
Suppose that~$\chiinf > 0$.
Then by~\cite[Main Theorem]{PrzRivSmi03} there exists~$r_0' > 0$ such that for every~$z_0$ in~$J(f)$, every~$\varepsilon > 0$, every sufficiently large integer~$n$, and every connected component~$W$ of~$f^{-n}(B(z_0, r_0'))$, we have
$$ \diam(W) \le \exp (-n(\chiinf  - \varepsilon)). $$
Then we put~$r_0 = \min \{ r_0', \dist (\partial V, \CJ) \}$.

Given a subset~$Q$ of~$\CC$ we define~$n_Q \in \{ 0, 1, \ldots, + \infty \}$ as follows.
If there are infinitely many integers~$n$ such that~$\diam(f^n(Q)) < r_0$, then we put~$n_Q = + \infty$.
Otherwise we let~$n_Q$ be the largest integer~$n \ge 0$ such that~$\diam(f^n(Q)) < r_0$.
\begin{lemm}\label{l:size versus time}
Let~$f$ be a rational map of degree at least two.
Then for every $\varepsilon > 0$ there is a constant~$C(\varepsilon) > 1$ such that for each connected subset~$Q$ of~$\CC$ that intersects the Julia set of~$f$ we have, 
\begin{equation*}\label{e:size versus time}
C(\varepsilon)^{-1} \exp(-n_Q (\chisup + \varepsilon))
\le
\diam(Q)
\le
C(\varepsilon) \exp(-n_Q (\chiinf - \varepsilon))
\end{equation*}
\end{lemm}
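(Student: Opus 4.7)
The plan is to prove the two inequalities separately, since each uses a distinct ingredient already built into the choice of~$r_0$. The upper bound rests on the contraction property of pull-backs under~$f$ that was used to define~$r_0$; the lower bound rests on part~2 of Proposition~\ref{p:individual pressure}, which controls $\sup_{\CC} |(f^n)'|$ from above by $\exp(n(\chisup + \varepsilon))$.

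For the upper bound, suppose first that $\chiinf > 0$. By construction $r_0 \le r_0'$, so the property coming from the Main Theorem of~\cite{PrzRivSmi03} applies: for every sufficiently large integer~$n$, every $z_0 \in J(f)$ and every connected component $W$ of $f^{-n}(B(z_0, r_0'))$ one has $\diam(W) \le \exp(-n(\chiinf - \varepsilon))$. Pick any point $x \in Q \cap J(f)$ and set $z_0 \= f^{n_Q}(x) \in J(f)$. Since $\diam(f^{n_Q}(Q)) < r_0$, we have $f^{n_Q}(Q) \subset B(z_0, r_0) \subset B(z_0, r_0')$, so $Q$ is contained in the connected component of $f^{-n_Q}(B(z_0, r_0'))$ containing~$x$. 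For $n_Q$ large this yields $\diam(Q) \le \exp(-n_Q (\chiinf - \varepsilon))$; the finitely many small values of~$n_Q$ are absorbed by enlarging~$C(\varepsilon)$, and the case $n_Q = +\infty$ forces $\diam(Q) = 0$ by letting $n \to +\infty$ along the integers with $\diam(f^n(Q)) < r_0$. When $\chiinf = 0$, the claimed inequality reads $\diam(Q) \le C(\varepsilon) \exp(n_Q \varepsilon)$, which holds trivially by taking $C(\varepsilon) \ge \diam(\CC)$.

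For the lower bound, part~2 of Proposition~\ref{p:individual pressure} produces, for each $\varepsilon > 0$, a constant $C_\varepsilon > 0$ such that for every integer $n \ge 0$,
$$
\sup_{z \in \CC} |(f^n)'(z)| \le C_\varepsilon \exp(n(\chisup + \varepsilon)).
$$
Since $\CC$ is geodesic in the spherical metric, this derivative bound makes~$f^n$ globally Lipschitz on~$\CC$ with constant $C_\varepsilon \exp(n(\chisup + \varepsilon))$; as~$Q$ is connected we obtain
$$
\diam(f^n(Q)) \le C_\varepsilon \exp(n(\chisup + \varepsilon)) \, \diam(Q)
$$
for every $n \ge 0$. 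When $n_Q < + \infty$, the maximality in the definition of~$n_Q$ gives $\diam(f^{n_Q + 1}(Q)) \ge r_0$, so applying the displayed inequality with $n = n_Q + 1$ yields
$$
\diam(Q) \ge r_0 \, C_\varepsilon^{-1} \exp(-(n_Q + 1)(\chisup + \varepsilon)),
$$
which is the desired lower bound after adjusting the constant to absorb the factor $\exp(-(\chisup + \varepsilon))$. In the case $n_Q = + \infty$, the claimed lower bound is~$0$ and there is nothing to prove.

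The proof is essentially a bookkeeping exercise; the main (external) input is the shrinking statement from~\cite{PrzRivSmi03} that lies behind the very choice of~$r_0$ in the TCE case, together with the Lipschitz-type estimate from Proposition~\ref{p:individual pressure}(2). The only subtle point is uniformly packaging the estimates across small~$n_Q$, the degenerate value $n_Q = 0$, and the limiting case $n_Q = + \infty$ into a single constant $C(\varepsilon)$, which is handled by the trivial bound $\diam(Q) \le \diam(\CC)$ and by sending $n \to + \infty$, respectively.
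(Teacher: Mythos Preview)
Your proof is correct and follows exactly the same two-step strategy as the paper's proof, which is terse to the point of being a sketch: the paper simply says the right inequality is trivial when $\chiinf = 0$ and ``given by the definition of $r_0$'' when $\chiinf > 0$, and that the left inequality is ``a direct consequence of part~2 of Proposition~\ref{p:individual pressure}''. You have faithfully unpacked both steps. One small remark: in your lower-bound argument the clause ``as~$Q$ is connected'' is unnecessary, since the global derivative bound on the geodesic sphere already makes $f^n$ globally Lipschitz, and $\diam(f^n(Q)) \le L\,\diam(Q)$ then holds for any subset~$Q$; but this does not affect correctness.
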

\begin{proof}
The inequality on the right holds trivially when~$\chiinf = 0$, and when $\chiinf > 0$ it is given by the definition of~$r_0 > 0$.
The inequality on the left is a direct consequence of part~2 of Proposition~\ref{p:individual pressure}.
\end{proof}

\begin{proof}[Proof of Proposition~\ref{p:pull-back contribution}]
Let~$r_1 > 0$ be as in the definition of primitive squares in~\S\ref{ss:primitive squares}, and let~$(\hV, V)$ be a pleasant couple for~$f$ such that~$f(\hV) \subset B(\CVJ, r_1)$.
Furthermore, let~$A_1 > 0$ and~$K_1 > 1$ be given by Koebe Distortion Theorem in such a way that for each pull-back~$W$ of~$V$ such that~$f^{m_W}$ is univalent on~$\hW$ we have $\diam(W) \le A_1 \dist(W, \partial \hW)$, and such that for each $j = 1, \ldots, m_W$ the distortion of~$f^{j}$ on~$W$ is bounded by~$K_1$.

\partn{1}
Note that it is enough to show that there are~$t < t_0$ and~$p < P(t_0)$ for which~\eqref{e:pressure outside} holds.

Let $V'$ be a sufficiently small neighborhood of $\CJ$ contained in~$V$, so that for each $c \in \CJ$ the set
$$
K' = \{ z \in J(f) \mid \text{ for every $n \ge 0$, $f^n(z) \not \in V'$} \}
$$ 
intersects $V^c$.
By Lemma~\ref{l:pressure outside} we have $P(f|_{K'}, -t_0 \ln|f'|) < P(t_0)$.
Let~$t < t_0$ and~$p < P(t_0)$ be sufficiently close to~$t_0$ and~$P(t_0)$, respectively, so that $p > P(f|_{K'}, -t \ln|f'|)$.

For each $c \in \CJ$ choose a point~$z(c)$ in $K' \cap V^c$.
Given a univalent pull-back~$W$ of~$V$ let~$z_W$ be the unique point in $f^{-m_W}(z(c(W)))$ contained in~$W$.
Note that when~$W \in \fL_V$ we have~$z_W \in K'$.
On the other hand, there is a distortion constant $C > 0$ such that for each pull-back~$W$ of~$V$ such that~$f^{m_W}$ maps a neighborhood of~$W$ univalently onto a component of~$\hV^c$, we have $\diam(W) \le C |(f^{m_W})'(z_W)|^{-1}$.

Since by hypothesis the restriction of~$f$ to~$K'$ is uniformly expanding, we
have
\begin{multline*}
\limsup_{n \to + \infty} \tfrac{1}{n} \ln \sum_{\substack{W \in \fL_V \\ m_W = n}} |(f^{n})'(z_W)|^{-t}
\\ \le
\limsup_{n \to + \infty} \tfrac{1}{n} \ln \sum_{c \in \CJ} \sum_{z \in K' \cap f^{-n} (z(c))} |(f^n)'(z)|^{- t} 
\\ \le
P(f|_{K'}, -t \ln |f'|),
\end{multline*}
hence
\begin{multline*}
C_2 \= \sum_{W \in \fL_V} \exp(- m_W p) \diam(W)^{t}
\\ \le
C^{|t|} \sum_{W \in \fL_V} \exp(- m_W p) |(f^{m_W})'(z_W)|^{-t}
<
+ \infty.
\end{multline*}

\partn{2.1}
Put
$$ C_3 \= \min \{ \dist(z(c), \partial V^c) / \diam(V^c) \mid c \in \CJ  \} $$
and observe that for each pull-back~$W'$ of~$V$ such that~$\widehat{W'}$ is a univalent pull-back of~$\hV$, we have $\dist(z_{W'}, \partial W') \ge C_3 K_1^{-1} \diam(W')$.
We will show that for each pull-back~$\badp$ of~$\hV$, for each~$Q \in \sW(\badp)$, and each $W \in \fD_{\badp}$ such that $z_W \in Q$, we have
$$ \diam(f^{m_{\badp} + 1}(W)) \le 8 C_3^{-1} K_1 \diam \left( f^{m_{\badp} + 1}(Q) \right). $$
Put~$Q' = f^{m_{\badp} + 1}(Q)$ and $W' = f^{m_{\badp} + 1}(W)$, and suppose by contradiction that~$\diam(W') > 8 C_3^{-1} K_1 \diam(Q')$.
Observe that~$Q'$ is a primitive square contained in~$B(f(c(\badp)), r_1)$ and that $W' \in \fL_V$.
So there is a primitive square~$Q_0'$ such that~$Q'$ is a quarter of~$Q_0'$.
We have
$$ \diam ( \widehat{Q'}_0 ) \le 8 \diam(Q') < C_3 K_1^{-1} \diam(W') \le \dist(z_{W'}, \partial W'). $$
Since by hypothesis $z_W \in Q$, we have $z_{W'} = f^{m_{\badp} + 1}(z_W) \in Q' \subset \widehat{Q'}_0$, so the last inequality implies that $\widehat{Q'}_0 \subset~W'$.
But~$f^{m_{\badp} + 1}$ is univalent on~$W$, so the connected component~$Q_0$ of $f^{- (m_{\badp} + 1)} \left( Q_0' \right)$ containing~$Q$ is a univalent square of order~$m_{\badp}$ satisfying~$\widehat{Q}_0 \subset \badp$, that contains~$Q$ strictly.
This contradicts the hypothesis that $Q \in \sW(\badp)$.

\partn{2.2}
We will now show that there is a constant~$C_4 > 0$ such that for each pull-back~$\badp$ of~$\hV$ and each~$Q \in \sW(\badp)$ we have 
\begin{equation}\label{e:square contribution}
\sum_{\substack{W \in \fD_{\badp} \\ z_W \in Q}} \exp(- p m_{W}) \diam(W)^{t}
\le
C_4 \exp(- p n_Q) \diam(Q)^{t}.
\end{equation}

Let~$\badp$ be a pull-back of~$\hV$ and let~$Q \in \sW(\badp)$.
Put $Q' = f^{m_{\badp} + 1}(Q)$, and let~$B$ be a ball whose center belongs to~$Q'$ and of radius equal to~$(8C_3^{-1}K_1 + 1)\diam(Q')$.
By part~2.1, for each~$W \in \fD_{\badp}$ such that $z_W \in Q$ we have $f^{m_{\badp} + 1}(W) \subset B$.
Since the distortion of~$f^{m_{\badp} + 1}$ is bounded by~$K_0$ on~$Q$, and by~$K_1$ on each element of~$\fD_{\badp}$, we obtain,
\begin{multline*}
\sum_{\substack{W \in \fD_{\badp} \\ z_W \in Q}} \exp(- p m_{W}) \diam(W)^{t}
\\ \le
\exp(- p (m_{\badp} + 1)) (K_0K_1)^{|t|} \left( \frac{\diam(Q)}{\diam(Q')} \right)^{t} 
\cdot \\ \cdot
\sum_{\substack{W' \in \fL_V \\ W' \subset B}} \exp(- p m_{W'}) \diam(W')^{t}.
\end{multline*}

If there is no~$W \in \fD_{\badp}$ such that~$z_W \in Q$, then there is nothing to prove.
So we assume that there is an element~$W_0$ of~$\fD_{\badp}$ such that~$z_{W_0} \in Q$.
Then~$Q'$, and hence~$B$, intersects~$K'$, as it contains the point~$z_{f^{m_{\badp} + 1}(W_0)}$.
Since by hypothesis the restriction of~$f$ to~$K'$ is uniformly expanding, there is~$n_0 \ge 0$ independent of~$\badp$, such that $n_{Q'} \le n_B + n_0$ and such that there is an integer~$n_B' \ge 0$ satisfying~$|n_B' - n_B| \le n_0$, such that ~$f^{n_B'}$ is univalent on~$B$ and has distortion bounded by~2 on this set.
We have $|n_{Q'} - n_B'| \le 2n_0$, so there is a constant~$C_5 > 0$ independent of~$B$ such that $\diam(f^{n_B'}(B)) > C_5$.
So, if we put
$$ C_6 \= \exp(|p| 2n_0) \left( 2C_5^{- 1} 2(8C_3^{-1}K_1 + 1) \right)^{|t|}C_2,
$$
then we have
\begin{multline*}
\sum_{\substack{W' \in \fL_V \\ W' \subset B}} \exp( - p m_{W'}) \diam(W')^{t}
\\ \le
\exp(- p n_B') 2^{|t|} \left( \frac{\diam(B)}{\diam(f^{n_B'}(B))} \right)^{t}
\cdot
\sum_{\substack{W'' \in \fL_V \\ W'' \subset f^{n_B'}(B)}} \exp( - p m_{W''}) \diam(W'')^{t}
\\ \le
\exp(-p n_{Q'}) \exp ( - |p| 2n_0) (2C_5^{-1})^{|t|} \diam(B)^t C_2 
\\ \le
C_6 \exp( - p n_{Q'}) \diam(Q')^{t}.
\end{multline*}

Inequality~\eqref{e:square contribution} with constant~$C_4 \= C_6 (K_0K_1)^{|t|}$, is then a direct consequence of the last two displayed (chains of) inequalities.

\partn{2.3}
We will now complete the proof of the proposition.
For each~$Q \in \sW(\badp)$ put~$Q' \= f^{m_{\badp} + 1}(Q)$.
Let~$Q \in \sW(\badp)$ be such that there is~$W \in \fD_{\badp}$ satisfying~$z_W \in Q$.
As this last point is in the Julia set of~$f$, by Lemma~\ref{l:size versus time} we have
$$ \diam(Q)^{t}
\le
C(\varepsilon)^{|t|} \exp \left( n_Q (\max \{ - t \chisup, - t \chiinf \} + |t|\varepsilon) \right). $$
Since the elements of~$\sW(\badp)$ cover $\badp \setminus \Crit \left( f^{m_{\badp} + 1} \right)$, if we put
$$\gamma
\=
\exp ( - p + \max \{ - t \chisup, - t \chiinf \} + |t| \varepsilon ) \in (0, 1), $$
then by summing over~$Q \in \sW(\badp)$ in~\eqref{e:square contribution} we obtain
\begin{multline*}
\sum_{W \in \fD_{\badp}} \exp(- p m_{W}) \diam(W)^{t}
\\ \le
C_4 C(\varepsilon) \sum_{\substack{Q \in \sW(\badp) \\ Q \cap J(f) \neq \emptyset}} \gamma^{n_Q}
=
C_4 C(\varepsilon) \gamma^{m_{\badp} + 1} \sum_{\substack{Q \in \sW(\badp) \\ Q \cap J(f) \neq \emptyset}} \gamma^{n_{Q'}}.
\end{multline*}

To estimate this last number, observe that by Lemma~\ref{l:size versus time}, for each~$Q \in \sW(\badp)$ intersecting the Julia set of~$f$ we have
$$ \diam(Q') \ge C(\varepsilon)^{-1} \exp( - n_{Q'} (\chisup + \varepsilon)). $$
So, if we put $\tgamma = \gamma^{\tfrac{\ln 2}{\chisup + \varepsilon}}$, $C_7 = \tgamma^{- \log_2 C(\varepsilon) - \log_2 \diam(\hV^{c(\badp)})}$ and for each $Q \in \sW(\badp)$ we put $\xi(Q') = \diam(Q') / \diam(\hV^{c(\badp)})$, then we have $\gamma^{n_{Q'}} \le C_7 \tgamma^{- \log_2 \xi(Q')}$.
So Proposition~\ref{p:backward Whitney} implies that,
\begin{multline*}
\sum_{ \substack{Q \in \sW(\badp) \\ Q \cap J(f) \neq \emptyset}} \gamma^{n_{Q'}}
\le
C_7 \sum_{ \substack{Q \in \sW(\badp) \\ Q \cap J(f) \neq \emptyset}} \tgamma^{- \log_2 \xi(Q')}
\le \\ \le
2600 C_7 \deg(f)^{\ell(\badp)} \left( C_0 + \tfrac{1}{2} \ell(\badp) \log_2 \ell(\badp) + \ell(\badp) \sum_{n = 0}^{+ \infty} \tgamma^n \right).
\end{multline*}
This completes the proof of the proposition.
\end{proof}

\section{Proof of the Main Theorem}\label{s:proof of nice thermodynamics}
The purpose of this section is to prove the following version of the Main Theorem for pleasant couples.
Recall that each nice couple is pleasant and satisfies property~(*), see~\S\ref{ss:two variable pressure}.
\begin{theoalph}\label{t:nice thermodynamics}
Let~$f$ be a rational map of degree at least two that is expanding away from critical points, and that has arbitrarily small pleasant couples having property~(*).
Then following properties hold.
\begin{description}
\item[Analyticity of the pressure function]
The pressure function of~$f$ is real analytic on $(\tneg, \tpos)$, and linear with slope~$- \chisup(f)$ (resp. $-\chiinf(f)$) on $(- \infty, \tneg]$ (resp. $[\tpos, + \infty)$).
\item[Equilibrium states]
For each $t_0 \in (\tneg, \tpos)$ there is a unique equilibrium state of~$f$ for the potential~$-t_0 \ln|f'|$.
Furthermore this measure is ergodic and mixing.
\end{description}
\end{theoalph}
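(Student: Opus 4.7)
My plan is to reduce Theorem~\ref{t:nice thermodynamics} to the lifting result Theorem~\ref{t:lifting}. Fix $t_0 \in (\tneg, \tpos)$. I will show that every sufficiently small pleasant couple $(\hV, V)$ with property~(*) satisfies the two hypotheses of Theorem~\ref{t:lifting} at $t_0$: namely that the two-variable pressure~$\pressure$ associated to~$(\hV, V)$ is finite on a neighborhood of~$(t_0, P(t_0))$, and that~$\pressure(t, P(t)) = 0$ for every~$t$ in a neighborhood of~$t_0$. Once this is established, Theorem~\ref{t:lifting} delivers both the existence and uniqueness of an ergodic mixing equilibrium state for the potential $-t_0 \ln|f'|$ and the real analyticity of the pressure function in a neighborhood of~$t_0$. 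The linear behavior on $(-\infty, \tneg]$ and $[\tpos, +\infty)$ is immediate from Proposition~\ref{p:asymptotes}.

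\textbf{Finiteness of~$\pressure$.} By Koebe distortion and subadditivity, finiteness of $\pressure(t, p)$ reduces to finiteness of the one-step sum $\sum_{W \in \fD} \exp(-p m_W) \diam(W)^t$ (with constants depending only on the couple). I would apply Lemma~\ref{l:decomposition into bad} to split~$\fD$ into the first-level pieces $\fD_{\hV^c}$ and the bad pieces~$\fD_{\badp}$, and then invoke Proposition~\ref{p:pull-back contribution}: part~1 yields finiteness of the analogous sum over $\fL_V$, while part~2 bounds the contribution of each bad pull-back~$\badp$ of order~$n$ by $C_1(\deg(f)+1)^{\ell(\badp)} \exp\bigl(-n(p - \max\{-t\chiinf, -t\chisup\} - |t|\varepsilon)\bigr)$. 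Because the standing assumption on~$f$ forbids critical points in~$J(f)$ from landing on critical points, the number $L$ of consecutive iterates for which $f^\ell(\CJ) \cap \hV = \emptyset$ can be made arbitrarily large by shrinking~$(\hV, V)$. Lemma~\ref{l:bad pull-backs} then bounds the number of bad pull-backs of order~$n$ by $\#(\CJ)(2L\deg(f)\#(\CJ))^{n/L}$, and the proof of that lemma controls $\ell(\badp)$ essentially by~$n/L$. Summing over~$n$ gives a geometric series with ratio
\begin{equation*}
\bigl(2L \deg(f)\#(\CJ)(\deg(f)+1)\bigr)^{1/L} \exp\bigl(-(p - \max\{-t\chiinf, -t\chisup\} - |t|\varepsilon)\bigr).
\end{equation*}
The first factor tends to~$1$ as $L \to +\infty$, and by Proposition~\ref{p:asymptotes} the strict inequality $P(t_0) > \max\{-t_0\chiinf, -t_0\chisup\}$ makes the exponent in the second factor strictly negative on a neighborhood of~$(t_0, P(t_0))$ for suitably small~$\varepsilon$. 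Taking $(\hV, V)$ small enough then makes the ratio less than~$1$, establishing finiteness of~$\pressure$ on that neighborhood uniformly.

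\textbf{Vanishing of $\pressure(t, P(t))$.} The inequality $\pressure(t, P(t)) \le 0$ follows from part~2 of Lemma~\ref{l:pressure} and continuity of~$\pressure$ on the interior of the set where it is finite. For the reverse inequality, I would use the existence, furnished by Proposition~\ref{p:conformal measures}, of a $(t, P(t))$-conformal probability measure~$\hmu$ for~$f$ supported on $J(f)$. Iterating the conformal relation $m_W$ times shows that, for every $W \in \fD$ and Borel $U \subset W$,
\begin{equation*}
\hmu(F(U)) = \hmu(f^{m_W}(U)) = \exp(m_W P(t)) \int_U |(f^{m_W})'|^t \, d\hmu = \exp(m_W P(t)) \int_U |F'|^t \, d\hmu.
\end{equation*}
Hence the non-trivial restriction $\hmu|_V$ is a $(t, P(t))$-conformal measure for the induced map~$F$; by the Mauldin--Urba{\'n}ski theory (Theorem~3.2.3 of~\cite{MauUrb03}, together with Remark~\ref{r:using symbolic}), the existence of such a conformal measure forces $\pressure(t, P(t)) = 0$. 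This is the promised ``argument involving the pressure function of the rational map''.

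\textbf{Main obstacle.} The crux of the argument is clearly the finiteness step: both the number of bad pull-backs of order~$n$ and the single-pull-back weight $(\deg(f)+1)^{\ell(\badp)}$ grow exponentially in~$n$, and only the combination of (a)~the decay coming from $P(t_0) > \max\{-t_0\chiinf, -t_0\chisup\}$ (Proposition~\ref{p:asymptotes}), (b)~the ability to make~$L$ arbitrarily large by shrinking~$(\hV, V)$, and (c)~the delicate per-pull-back estimate of Proposition~\ref{p:pull-back contribution} (itself resting on the Whitney-type decomposition of~\S\ref{s:Whitney decomposition}) conspire to render the geometric series summable. By contrast, the vanishing step, while conceptually essential, is a short consequence of the conformal measure produced in Proposition~\ref{p:conformal measures} together with the Mauldin--Urba{\'n}ski characterization of conformal measures.
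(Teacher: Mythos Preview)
Your finiteness argument is correct and matches the paper's \S\ref{ss:finiteness} essentially verbatim: the decomposition via Lemma~\ref{l:decomposition into bad}, the use of Proposition~\ref{p:pull-back contribution}, the count of bad pull-backs from Lemma~\ref{l:bad pull-backs}, and the choice of~$L$ large to make the geometric ratio less than~$1$ are exactly the paper's ingredients.

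Your vanishing argument, however, has a genuine gap. You derive correctly that for every $W\in\fD$ and Borel $U\subset W$ one has $\hmu(F(U))=\exp(m_W P(t))\int_U|F'|^t\,d\hmu$; but this does \emph{not} make $\hmu|_V$ a $(t,P(t))$-conformal measure for~$F$ in the sense used in the paper and in~\cite{MauUrb03}, because that notion requires the measure to be supported on the maximal invariant set~$J(F)$. In transfer-operator language, the identity you prove gives only $L_\phi^*(\hmu|_V)=\hmu|_D\le\hmu|_V$, not equality. Consequently the Mauldin--Urba\'nski uniqueness of the eigenmeasure does not apply, and one cannot conclude $\pressure(t,P(t))=0$. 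Unwinding the argument, what you would actually need is $\hmu(J(F))>0$ (so that $Z_n(t,P(t))\asymp\hmu(\mathrm{dom}\,F^n)$ stays bounded away from zero), and this is precisely the nontrivial statement that the ``escape set'' $V\cap J(f)\setminus J(F)$ does not carry all the conformal mass---which is not available a priori.

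The paper handles the lower bound $\pressure(t,P(t))\ge0$ by a completely different route: Lemma~\ref{l:from induced to full tree} in \S\ref{ss:vanishing}. There one argues by contradiction that if $\pressure(t,p)<0$ for some $p$ with $\max\{-t\chiinf,-t\chisup\}<p<P(t)$, then the full tree sum $\sum_n e^{-pn}\sum_{y\in f^{-n}(z_0)}|(f^n)'(y)|^{-t}$ is finite, forcing $p\ge P(t)$ via the tree-pressure characterization~\eqref{e:tree pressure}. The key combinatorial step is a decomposition of each preimage $y\in f^{-n}(z_0)$ into a ``last bad time'' followed by a univalent tail, so that the full tree sum factorizes as (bad part)$\times\sup_w U(w)$, with the bad part controlled exactly as in the finiteness argument and $U(w)$ controlled by $\pressure(t,p)<0$ together with part~1 of Proposition~\ref{p:pull-back contribution}. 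This is the ``argument involving the pressure function of the rational map'' referred to in the introduction---it is the tree-pressure formula~\eqref{e:tree pressure}, not the conformal measure, that does the work.
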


Throughout the rest of this section we fix a rational map~$f$ and~$t_0 \in (\tneg, \tpos)$ as in the statement of the theorem.
Recall that by~Proposition~\ref{p:asymptotes} we have $P(t_0) > \max \{ -t_0 \chiinf, - t_0 \chisup \}$.
Put
$$ \gamma_0
\=
\exp \left( - \tfrac{1}{2} (P(t_0) - \max \{ -t_0 \chiinf, - t_0 \chisup \}) \right) \in (0, 1), $$
and choose $L \ge 0$ sufficiently large so that
\begin{equation}\label{e:constants choice}
(2L \deg(f) (\deg(f) + 1) \#(\CJ))^{1/L} \gamma_0 < 1.
\end{equation}
Let~$(\hV, V)$ be a pleasant couple for~$f$ that is sufficiently small so that for each $\ell \in \{ 1, \ldots, L \}$ the set~$f^{\ell}(\CJ)$ is disjoint from~$\hV$ (recall that our standing convention is that no critical point of~$f$ in its Julia set is mapped to a critical point under forward iteration.)
We assume furthermore that~$(\hV, V)$ has property~(*).
By Lemma~\ref{l:bad pull-backs} it follows that for each integer~$n \ge 1$ and each~$z_0 \in V$ the number of bad iterated pre-images of~$z_0$ of order~$n$ is at most
$$ (2L \deg(f) \#(\CJ))^{n/L} $$
and that the number of bad pull-backs of~$\hV$ of order~$n$ is at most
$$ \# (\CJ) (2L \deg(f) \#(\CJ))^{n/L}. $$

We show in~\S\ref{ss:finiteness} that the pressure function~$\pressure$ of the canonical induced map associated to~$(\hV, V)$, defined in~\S\ref{ss:two variable pressure}, is finite on a neighborhood of~$(t, p) = (t_0, p_0)$.
In~\S\ref{ss:vanishing} we show that for each~$t$ close to~$t_0$ the function~$\pressure$ vanishes at~$(t, p) = (t, P(t))$.
Then Theorem~\ref{t:nice thermodynamics} follows from Theorem~\ref{t:lifting}.

\subsection{The function~$\pressure$ is finite on a neighborhood of $(t, p) = (t_0, P(t_0))$}\label{ss:finiteness}
By the considerations in~\S\ref{ss:two variable pressure}, to show that~$\pressure$ is finite on a neighborhood of~$(t, p) = (t_0, p_0)$ we just need to show that there are~$t < t_0$ and $p < P(t_0)$ such that
\begin{equation}\label{e:finiteness}
\sum_{W \in \fD} \exp( - p m_W) \diam(W)^t < + \infty.
\end{equation}

Let~$t < t_0$ and~$p < P(t_0)$ be given by part~1 of~Proposition~\ref{p:pull-back contribution}.
Taking~$t$ and~$p$ closer to~$t_0$ and~$P(t_0)$, respectively, we assume that there is~$\varepsilon > 0$ sufficiently small so that
$$ p - \max \{ -t \chiinf, - t \chisup \} - |t| \varepsilon > \tfrac{1}{2} (P(t_0) - \max \{ -t_0 \chiinf, - t_0 \chisup \}), $$
and put
\begin{equation*}\label{e:folga}
\gamma \= \exp( -p + \max \{ -t \chiinf, - t \chisup \} + |t| \varepsilon ) \in (0, \gamma_0).
\end{equation*}

For each $c \in \CJ$ we have, by applying part~2 of Proposition~\ref{p:pull-back contribution} to~$\badp = \hV^c$,
\begin{equation}\label{e:first return}
\sum_{W \in \fD_{\hV^c}} \exp( - p m_W) \diam(W)^t \le C_1 (\deg(f) + 1).
\end{equation}
Since for each pull-back~$\badp$ of~$\hV$ we have~$\ell(\badp) \le n / L + 1$, using part~2 of Proposition~\ref{p:pull-back contribution} again and letting~$C_2 = C_1 (\deg(f) + 1) \#(\CJ)$ we obtain
\begin{multline*}
\sum_{\badp \text{ bad pull-back of~$\hV$}} \sum_{W \in \fD_{\badp}} \exp( - p m_W) \diam(W)^t
\\ \le
C_1 \sum_{\badp \text{ bad pull-back of~$\hV$}} (\deg(f) + 1)^{\ell(\badp)} \cdot \gamma^{m_{\badp}}
\\ \le
C_2 \sum_{n = 1}^{+\infty} \left( (2L \deg(f) (\deg(f) + 1) \#(\CJ))^{1/L} \cdot \gamma \right)^n.
\end{multline*}
As~$\gamma \in (0, \gamma_0)$, we have by~\eqref{e:constants choice} that the sum above is finite.
Then~\eqref{e:finiteness} follows from~\eqref{e:first return} and Lemma~\ref{l:decomposition into bad}.
\subsection{For each~$t$ close to~$t_0$ we have $\pressure(t, P(t)) = 0$}\label{ss:vanishing}
Recall that for a given~$t_0 \in (\tneg, \tpos)$ we have fixed a sufficiently small pleasant couple~$(\hV, V)$, and that we denote by~$\pressure$ the corresponding pressure function defined in~\S\ref{ss:two variable pressure}.
Furthermore, in~\S\ref{ss:finiteness} we have shown that the function~$\pressure$ is finite on a neighborhood of~$(t_0, P(t_0))$.
We will show now that for~$t$ close to~$t_0$ the function~$\pressure$ vanishes at~$(t, P(t))$, thus completing the proof of Theorem~\ref{t:nice thermodynamics}.

In view of Lemma~\ref{l:pressure} we just need to show that for each~$t$ close to~$t_0$ we have $\pressure(t, P(t)) \ge 0$.
Suppose by contradiction that in each neighborhood of~$t_0$ we can find~$t$ such that $\pressure(t, P(t)) < 0$.
As~$\pressure$ is finite on a neighborhood of~$(t, p) = (t_0, P(t_0))$, it follows that~$\pressure$ is continuous at this point (Lemma~\ref{l:pressure}).
Thus there are
$$ t \in (\tneg, \tpos)
\text{ and }
p \in (\max \{ - t \chiinf, -t \chisup \}, P(t)), $$
such that $\pressure(t, p) < 0$, and such that the conclusion of part~1 of Proposition~\ref{p:pull-back contribution} holds for these values of~$t$ and~$p$.
However, this contradicts following lemma.
\begin{lemm}\label{l:from induced to full tree}
Let~$t \in (\tneg, \tpos)$ and $p > \min \{ - t\chiinf, -t\chisup \}$ be such that $\pressure(t, p) < 0$ and such that the conclusion of part~1 of Proposition~\ref{p:pull-back contribution} holds for these values of~$t$ and~$p$.
Then $p \ge P(t)$.
\end{lemm}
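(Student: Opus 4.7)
My plan is to argue by contradiction: suppose $p < P(t)$ and derive a contradiction using the tree pressure characterization of~$P(t)$ from~\eqref{e:tree pressure}. By that identity, for every $z_0 \in \CC$ outside a certain set of Hausdorff dimension zero one has
$$ P(t) \;=\; \limsup_{n \to \infty} \tfrac{1}{n} \ln \Lambda_n(z_0, t). $$
First I would fix such a point $z_0 \in V \cap J(f)$; such a point exists because $V$ is a neighborhood of~$\CJ$ and hence $V \cap J(f)$ has positive Hausdorff dimension, while the exceptional set in~\eqref{e:tree pressure} has Hausdorff dimension zero. To derive a contradiction it is enough to show the uniform estimate
$$ \Lambda_n(z_0, t) \;\le\; C \exp(n p) \qquad (n \ge 1) $$
for some constant $C$ independent of~$n$, for then $\tfrac{1}{n} \ln \Lambda_n(z_0, t) \to p$ from above and $P(t) \le p$, contradicting the assumption.

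To prove the uniform estimate I would carry out a decomposition of $f^{-n}(z_0)$ parallel to the decomposition of the domain~$\fD$ of the induced map in Lemma~\ref{l:decomposition into bad}, but applied to full pull-backs of~$f$ starting from the fixed point~$z_0 \in V$. Every $w \in f^{-n}(z_0)$ determines the pull-back $\cP_w$ of the component of~$V$ containing~$z_0$ under~$f^n$; by tracking the backward orbit from~$z_0$ to~$w$ and recording visits to~$V$ together with whether each visit is a good return (univalent pull-back of~$\hV$) or not, I would factor $\cP_w$ as a composition of a first entrance inverse branch indexed by some $W_0 \in \fL_V$, iterated inverse branches of the canonical induced map~$F$ from good returns, and inverse branches associated to bad pull-backs of~$\hV$ in the sense of~\S\ref{ss:bad pull-backs}. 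By property~\puno{} of pleasant couples and Koebe's distortion theorem, this factorization converts into a multiplicative bound on $|(f^n)'(w)|^{-t}$ in terms of the derivative factors $\diam(W_0)^t$, $\sup|\phi_{\underline W}'|^t$ associated to~$F$, and the derivative contributions of each intermediate bad pull-back.

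Summing, the contribution from orbits with only good returns is, after multiplying by $\exp(-np)$, dominated by
$$ \Big( \sum_{W_0 \in \fL_V} \exp(-p m_{W_0}) \diam(W_0)^t \Big) \cdot \Big( \sum_{r \ge 0} Z_r(t, p) \Big). $$
The first factor is finite by the assumed conclusion of part~1 of Proposition~\ref{p:pull-back contribution}, and the second is finite because $\pressure(t, p) < 0$ combined with~\eqref{e:induced pressure} forces geometric decay of~$Z_r(t, p)$. The contribution from orbits involving bad pull-backs is handled exactly as in~\S\ref{ss:finiteness}: part~2 of Proposition~\ref{p:pull-back contribution} yields a factor $(\deg(f)+1)^{\ell(\badp)} \gamma^{m_{\badp}}$ per bad pull-back, and the count in Lemma~\ref{l:bad pull-backs} combined with the choice of~$L$ in~\eqref{e:constants choice} keeps the total sum finite. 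Combining both contributions yields the uniform estimate, and hence the contradiction. The main technical obstacle is the careful bookkeeping of the derivative factor $|(f^n)'(w)|^{-t}$ (as opposed to diameters of pull-backs) through the recursive bad pull-back decomposition; this is essentially the same mechanism as in~\S\ref{ss:finiteness}, transposed from the induced map~$F$ to the full map~$f$ with pull-backs rooted at a generic point of~$V$.
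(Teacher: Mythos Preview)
Your overall strategy — showing $\sum_{n \ge 1} \exp(-pn)\Lambda_n(z_0,t) < +\infty$ for a generic $z_0 \in V$ and deducing $P(t)\le p$ — is exactly the paper's strategy, and your decomposition into a first-entry piece from~$\fL_V$ followed by iterated branches of~$F$ is also what the paper does to bound what it calls~$U(w)$. So the ``univalent'' half of your argument is fine and matches the paper.

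The gap is in your treatment of the bad contribution. You invoke part~2 of Proposition~\ref{p:pull-back contribution}, but that estimate controls $\sum_{W\in\fD_{\badp}}\exp(-pm_W)\diam(W)^t$, i.e.\ a sum over \emph{univalent} pull-backs $W$ sitting inside a bad pull-back~$\badp$. What you actually need to control is the derivative $|(f^n)'(w)|^{-t}$ at a bad iterated pre-image~$w$ of~$z_0$, where by definition \emph{no} univalent factorization through~$\hV$ is available along the bad segment; there is no Koebe comparison with a diameter, and elements of~$\fD_{\badp}$ are simply not the objects in play. In fact the trichotomy is cleaner than your outline suggests: each $y\in f^{-n}(z_0)$ carries at most \emph{one} bad segment (the terminal one, closest to~$z_0$), because once a good time appears, everything further back is univalent. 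So the full sum equals $U(z_0)+\sum_{n}\exp(-pn)\sum_{w\ \text{bad of order }n}|(f^n)'(w)|^{-t}\,U(w)$, and one has to bound the bad derivative sum directly.

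The paper does this not via Proposition~\ref{p:pull-back contribution} but by choosing~$z_0$ so that, in addition to~\eqref{e:tree pressure}, both~\eqref{e:individual minimal pressure} and~\eqref{e:individual maximal pressure} hold. Then $|(f^n)'(w)|^{-t}\le \exp\bigl(n(\max\{-t\chiinf,-t\chisup\}+o(1))\bigr)$ for every $w\in f^{-n}(z_0)$, and combining this with the bad-count from Lemma~\ref{l:bad pull-backs} and the inequality $p>\max\{-t\chiinf,-t\chisup\}$ (together with~\eqref{e:constants choice}) gives convergence of the bad sum. Your choice of~$z_0$ only ensured~\eqref{e:tree pressure}, which is not enough for this step.
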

\begin{proof}
Fix $z_0 \in V$ such that all,~\eqref{e:tree pressure}, \eqref{e:individual minimal pressure}, and \eqref{e:individual maximal pressure} hold.
To prove the lemma we just need to show that
$$ \sum_{n = 1}^{+ \infty} \exp( - p n) \sum_{y \in f^{-n}(z_0)} |(f^n)'(y)|^{-t}
< + \infty. $$

\partn{1}
Given an integer~$n \ge 1$ an element~$y \in f^{-n}(V)$ is a \emph{univalent iterated pre-image of order~$n$} if the pull-back of~$\hV$ by~$f^n$ containing~$y$ is univalent.
Recall that for an integer~$n \ge 1$ an element~$y$ of $f^{-n}(V)$ is a bad iterated pre-image of~$z_0$ of order~$n$ if for every~$j \in \{ 1, \ldots, n \}$ such that~$f^j(y) \in V$ the pull-back of~$\hV$ by~$f^n$ containing~$y$ is not univalent.

For~$y \in f^{-n}(z_0)$ there are three cases: $y$ is univalent, bad, or there is $m \in \{1, \ldots, n - 1 \}$ such that~$f^m(y) \in V$, such that~$f^m(y)$ is a bad iterated pre-image of~$z_0$ of order $n - m$ and such that~$y$ is a univalent iterated pre-image of~$f^m(y)$ of order~$m$.
In fact, if~$y \in f^{-n}(z_0)$ is not bad, then there is~$m \in \{1, \ldots, n - 1 \}$ such that~$f^m(y) \in V$ and such that~$y$ is a univalent iterated pre-image of~$f^m(y)$.
If~$m$ is the largest integer with this property, then there are two cases.
Either $m = n$ and then~$y$ is a univalent iterated pre-image of~$z_0$, or $m < n$ and then~$f^m(y)$ is a bad iterated pre-image of~$z_0$.

Therefore, if for each $w \in V$ we put
$$
U(w) \= 1 + \sum_{n = 1}^{+ \infty} \exp(-pn) \sum_{y \in f^{-n}(w), \text{ univalent}} |(f^n)'(y)|^{- t},
$$
then we have
\begin{multline}\label{e:tree from univalent}
1 + \sum_{n = 1}^{+ \infty} \exp( - p n) \sum_{y \in f^{-n}(z_0)} |(f^n)'(y)|^{-t}
\\
= U(z_0) + \sum_{n = 1}^{+ \infty} \exp(- p n) \sum_{w \in f^{-n}(z_0), \text{ bad}} |(f^n)'(w)|^{-t} U(w).
\end{multline}

As $p > \max \{ - t \chiinf, - t \chisup \}$, by Lemma~\ref{l:bad pull-backs} and~\eqref{e:constants choice} it follows that
$$
\sum_{n = 1}^{+ \infty} \exp( - p n) \sum_{w \in f^{-n}(z_0), \text{ bad}}   |(f^n)'(w)|^{-t}
<
+ \infty.
$$
So by~\eqref{e:tree from univalent}, to prove the lemma it is enough to prove that the supremum $\sup_{w \in V} U(p, w)$ is finite.

\partn{2}
Denote by~$L_V$ the first entry map to $V$, which is defined on the set of points $y \in \CC \setminus V$ having a good time, by $L_V (y) = f^{m(y)}(y)$.
Note that for each $w_0 \in V$, each integer $n \ge 1$ and each univalent iterated pre-image $y \in f^{-n}(w_0)$ of~$w_0$ of order~$n$, we have that $m(y) \le n$ and that $L_V(y) \in V$ is a univalent iterated pre-image of $w_0$ of order $n - m(y)$.
Moreover, note for each $k \ge 1$, each element of $F^{-k}(w_0)$ is a univalent iterated pre-image of $w_0$.
Conversely, for each univalent iterated pre-image $y$ of $w_0$ there is an integer $k \ge 1$ such that $F^k$ is defined at $y$ and $F^k(y) = w_0$ (see~\S\ref{ss:canonical induced map}).
Therefore, if for $z \in V$ we put
$$
L(z) \= 1 + \sum_{y \in L_V^{-1}(z_0)} \exp( - p m(y)) |(f^{m(y)})'(y)|^{-t},
$$
then we have,
\begin{equation}\label{e:univalent from landing}
U(w_0)
=
L(w_0) + \sum_{k = 1}^{+ \infty} \sum_{y \in F^{-k}(w_0)} \exp( - p m(y)) |(F^k)'(y)|^{-t} L(y).
\end{equation}

Since by hypothesis $\pressure(t, p) < 0$, for each $w \in V$ the double sum
\begin{equation*}
T_F(w) \= \sum_{k = 1}^{+ \infty} \sum_{y \in F^{-k}(w)} \exp( - p m(y) ) |(F^k)'(y)|^{-t},
\end{equation*}
is finite.

On the other hand, since the conclusion of part~1 of Proposition~\ref{p:pull-back contribution} holds, for each $z \in V$ the sum~$L(z)$ is finite.
By bounded distortion it follows that
$$
C' \= \sup_{z \in V} L(z) < + \infty.
$$

Thus, by~\eqref{e:univalent from landing} for each $w \in V$ we have $U(p, w) \le C' T_F(w) < + \infty$, and by bounded distortion $\sup_{w \in V} U(p, w) < + \infty$.
This completes the proof of the lemma.
\end{proof}

\appendix
\section{Puzzles and nice couples}\label{s:nice puzzles}
This appendix is devoted to showing that several classes of polynomials satisfy the conclusions of the Main Theorem.
In~\S\ref{ss:non-renormalizable} we consider the case of at most finitely renormalizable polynomials without indifferent periodic points, in~\S\ref{ss:infinitely renormalizable} we consider the case of some infinitely renormalizable quadratic polynomials, and finally in~\S\ref{ss:real quadratic} we consider the case of quadratic polynomials with real coefficients.

\subsection{At most finitely renormalizable polynomials }
\label{ss:non-renormalizable}
The purpose of this section is to prove the following result.
We thank Weixiao Shen for providing the main idea of the proof.
\begin{theoalph}\label{t:non-renormalizable}
Every at most finitely renormalizable complex polynomial without indifferent periodic points has arbitrarily small nice couples.
Furthermore, these nice couples can be formed by nice sets that are finite unions of puzzle pieces.
\end{theoalph}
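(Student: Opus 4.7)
The strategy is to construct nice couples as unions of Yoccoz puzzle pieces, with the essential input being the theorem of Kozlovski and van Strien~\cite{KozvSt09}: for an at most finitely renormalizable complex polynomial without indifferent periodic points, the Yoccoz puzzle pieces $P_n(c)$ around each critical point $c \in \CJ$ shrink to $\{c\}$ in diameter as $n \to +\infty$.

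First I would set up a Yoccoz puzzle adapted to $f$, based on external rays landing at a suitable repelling (pre-)periodic orbit separating the critical set; in the finitely renormalizable case one combines the top-level Yoccoz puzzle with Yoccoz-like puzzles at each renormalization level. Writing $B_n$ for the total level-$n$ puzzle boundary, the essential combinatorial facts are $B_{n-1} \subset B_n$, $f(B_n) \subset B_{n-1}$, and that the interior of any level-$n$ puzzle piece is disjoint from $B_n$. Given $r > 0$, choose $n_0$ large enough (using the shrinking theorem) so that the pieces $P_{n_0}(c)$ are pairwise disjoint with diameter less than $r$, and set $\hV = \bigsqcup_{c \in \CJ} \interior P_{n_0}(c)$. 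The inclusion $f^k(\partial \hV) \subset B_{n_0 - k} \subset B_{n_0}$ combined with $\hV \cap B_{n_0} = \emptyset$ shows that $\hV$ is a nice set, with simply connected components each containing exactly one critical point.

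To produce $V$ so that $(\hV, V)$ is a nice couple, I would use the first return pullback construction: for each $c \in \CJ$ let $\tau(c) \in \{1, 2, \dots\} \cup \{+\infty\}$ be the first positive time the orbit of $c$ returns to $\hV$, and set $V^c$ equal to the connected component of $f^{-\tau(c)}(\hV)$ containing $c$ when $\tau(c) < +\infty$, or to any sufficiently deep sub-piece $\interior P_{n_0 + N}(c)$ otherwise. The nice property of $\hV$ forces $V^c \subset \hV^c$ (since $c \in V^c \cap \hV$ implies the pullback lies in $\hV$, and connectedness selects one component). The nice couple condition $f^k(\partial V) \cap \hV = \emptyset$ for every $k \geq 1$ then follows cleanly from the definition of $\tau(c)$: for $1 \leq k < \tau(c)$, $f^k(V^c)$ is a pullback of $\hV$ based at $f^k(c) \notin \hV$ and is therefore disjoint from $\hV$ by the nice property, whence $f^k(\partial V^c) \cap \hV = \emptyset$ because $\hV$ is open; for $k \geq \tau(c)$, $f^k(\partial V^c)$ is a forward iterate of $\partial \hV^{c'}$ for the component $\hV^{c'}$ hit at the return time, and the nice property of $\hV$ again gives the result. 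That $V$ is itself a nice set follows because $V \subset \hV$.

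The main obstacle is the closure condition $\overline{V} \subset \hV$ in the presence of recurrent critical points, and this is where the shrinking of puzzle pieces from~\cite{KozvSt09} is crucial: it guarantees that deep-enough pullbacks of $\hV$ to a critical point are compactly contained in the original level-$n_0$ pieces, and in the non-recurrent case the same shrinking directly produces compactly contained sub-pieces. Once this is secured, the construction yields arbitrarily small nice couples---taking $n_0$ large makes $\diam \hV < r$ for any prescribed $r > 0$---and the sets $\hV$ and $V$ are by construction finite unions of puzzle pieces, completing the theorem.
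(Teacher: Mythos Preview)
Your outline captures the right architecture---take $\hV$ to be a union of critical puzzle pieces, then build $V$ from first-return pullbacks---but there is a genuine gap at the step you yourself flag as the ``main obstacle'': the compact containment $\overline{V}\subset\hV$.

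The difficulty is this. With $\hV=\bigsqcup_c P_{n_0}(c)$ at a common depth, the set $\hV$ is nice but not \emph{strictly} nice: forward iterates of $\partial\hV$ avoid the open set $\hV$, but they need not avoid $\overline{\hV}$, because $\partial P_{n_0}(c)$ contains arcs of external rays whose forward images can lie on $\partial P_{n_0}(c)$ again. Consequently the first-return pullback $V^c$ (a puzzle piece of depth $n_0+\tau(c)$) may share boundary rays with $\hV^c$, so $\overline{V^c}\not\subset\hV^c$. Your appeal to Kozlovski--van~Strien shrinking does not help here: shrinking says $\diam P_n(c)\to0$, hence deep enough pieces are compactly contained, but the first-return depth $n_0+\tau(c)$ is fixed once $n_0$ is chosen and need not be ``deep enough''. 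And you cannot simply pass to a later return, since between the first return and a later one the orbit of $c$ visits $\hV$, giving an intermediate iterate $f^k(V^c)\subset\hV$ whose boundary lies inside $\hV$---this destroys the nice-couple condition $f^k(\partial V)\cap\hV=\emptyset$.

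The paper's proof exists precisely to navigate this tension. It allows the critical pieces to sit at \emph{different} depths via functions $\nu:\sC\to\N$, and introduces the notion ``strictly nice'' (forward iterates of $\partial P(\nu)$ miss $\overline{P(\nu)}$). For a single recurrent critical point, Lemma~\ref{l:recurrent} obtains a strictly nice depth by iterating first returns and using shrinking to eventually separate closures; Lemma~\ref{l:nice functions} then provides an inductive mechanism (handling non-accumulating critical points, the first-return operator $\sR$, and extension to additional critical points) to assemble a strictly nice function on all of $\CJ$. Once one has a strictly nice $\nu$, part~2 of Lemma~\ref{l:nice functions} yields $\nu'$ with $\overline{P(\nu')}\subset P(\nu)$ and the nice-couple condition---exactly the compact containment your sketch lacks. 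Your construction is morally the special case where all depths coincide; making it rigorous forces one into the paper's more elaborate bookkeeping.
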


See~\cite[Proposition~5]{CaiLi09} for a somewhat similar result in the case of multimodal maps.

The proof relies on the fundamental result that diameters of puzzles tend uniformly to~$0$ as their depth tends to $\infty$ \cite{KozvSt09}; see also ~\cite{QiuYin09} for the case when the Julia set is totally disconnected.

Let~$f$ be an at most finitely renormalizable polynomial, and consider the puzzle construction described in~\cite[\S2.1]{KozvSt09}.
Given an integer $n \ge 0$ we denote by~$\Upsilon_n$ the collection of all puzzles of depth~$n$, which are by definition open sets.
For~$P \in \Upsilon_n$ and $p \in P$ we put~$P_n(p) \= P$.
We will assume that every critical point of~$f$ in~$J(f)$ is contained in a puzzle piece.
It is always possible to do the puzzle construction with this property.
This follows from the fact that in each periodic connected component of~$J(f)$ that is not reduced to a single point, there are infinitely many separating periodic points, see for example~\cite[\S A.1]{LiShe0911}.
We remark that the main technical results of~\cite{KozvSt09}, including the ``complex \emph{a priori} bounds'', are stated for ``complex box mappings'', and they are thus independent of the periodic points used to construct the puzzle pieces.

For~$z \in \C$ put~$\cO_f(z) = \bigcup_{n \ge 1} f^n(z)$, and put
$$ \delta_0
\=
\min \left\{ \dist\left(c, \overline{\cO_f(c')}\right) \mid c, c' \in \CJ, c \not \in \overline{\cO_f(c')} \right\}. $$
Let~$n_0 \ge 1$ be a sufficiently large integer so that the diameter of each puzzle piece of depth~$n_0$ is strictly smaller than~$\delta_0/2$.
Furthermore, let~$n_1 > n_0$ be a sufficiently large integer such that for each~$c \in \CJ$ we have $\overline{P_{n_1}(c)} \subset P_{n_0}(c)$, and such that for distinct~$c, c' \in \CJ$ we have $\overline{P_{n_1}(c)} \cap \overline{P_{n_1}(c')} = \emptyset$.
The following is a straightforward consequence of our choice of~$n_1$.
\begin{lemm}\label{l:non-accumulating}
For each~$c, c' \in \CJ$ such that~$c' \not \in \overline{\cO_f(c)}$, and each~$n \ge 1, m \ge 1, n' \ge n_1$ we have
\begin{equation*}\label{e:non-accumulating}
f^m(\partial P_{n}(c)) \cap \overline{P_{n'}(c')} = \emptyset.
\end{equation*}
\end{lemm}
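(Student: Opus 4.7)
My strategy is to analyze the forward image $f^m(\partial P_n(c))$ using the Markov structure of Yoccoz puzzles, and rule out its intersection with $\overline{P_{n'}(c')}$ by splitting into two regimes according to the depth of the image skeleton. The key tool is the standard Markov property: $f^m|_{P_n(c)}$ extends to a proper map onto $P_{n-m}(f^m(c))$ whenever $m \le n$, so that $f^m(\partial P_n(c)) \subset \partial P_{n-m}(f^m(c))$. For $m > n$, one first reduces to the case $m = n$ and then iterates with $f^{m-n}$ acting on the depth-$0$ skeleton $\bigcup_{Q \in \Upsilon_0} \partial Q$; the rays-and-Julia-set parts of this skeleton are forward invariant, whereas its equipotential parts are sent to equipotentials of strictly larger Green's function level.

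The first regime covers $m \ge n - n_0$, including every $m > n$. In all such subcases, the portion of the image that sits inside the filled-in Julia set is contained in the depth-$n_0$ skeleton $\bigcup_{Q \in \Upsilon_{n_0}} \partial Q$. This uses monotonicity, $\bigcup_{Q \in \Upsilon_k} \partial Q \subset \bigcup_{Q \in \Upsilon_{n_0}} \partial Q$ for $k \le n_0$, together with the invariance remark above. The remaining portion (relevant only when $m > n$) lies on equipotentials of Green's function level strictly larger than that bounding the depth-$0$ puzzles. On the other hand, $\overline{P_{n'}(c')} \subset P_{n_0}(c')$ by the choice of $n_1$, and $P_{n_0}(c')$ is by definition a connected component of the complement of $\bigcup_{Q \in \Upsilon_{n_0}} \partial Q$; in particular it is disjoint from this skeleton, and being bounded by an equipotential at depth $n'$ it has Green's function strictly less than the defining level of depth-$0$ puzzles. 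Both parts of the image therefore miss $\overline{P_{n'}(c')}$.

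The second regime, $m < n - n_0$, is where the diameter estimate enters. Since puzzles are nested, $P_{n-m}(f^m(c)) \subset P_{n_0}(f^m(c))$, so $\partial P_{n-m}(f^m(c))$ is contained in a set of diameter strictly smaller than $\delta_0/2$; similarly $\overline{P_{n'}(c')} \subset P_{n_0}(c')$ has diameter strictly smaller than $\delta_0/2$. A hypothetical point $z$ in the intersection would therefore satisfy $\dist(c', f^m(c)) \le \dist(c', z) + \dist(z, f^m(c)) < \delta_0$ by the triangle inequality, contradicting $\dist(c', f^m(c)) \ge \delta_0$, which follows immediately from $c' \notin \overline{\cO_f(c)}$ and the definition of $\delta_0$.

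The only obstacle I anticipate is purely bookkeeping: organizing the case $m > n$, in which the image no longer sits in a puzzle skeleton of the expected depth but instead in the depth-$0$ skeleton together with a finite family of high-level equipotentials. Once this case is handled separately, the proof reduces to the standard Markov property of Yoccoz puzzles, the monotonicity of the skeletons under depth, and the elementary triangle-inequality estimate above.
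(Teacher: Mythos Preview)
Your proof is correct in substance and is a fleshed-out version of what the paper leaves implicit (the paper gives no proof beyond calling the lemma ``a straightforward consequence of our choice of~$n_1$''). The regime-2 diameter argument is precisely the intended one.

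There is one imprecision in your Regime~1. For $n-n_0\le m\le n$ the image $\partial P_{n-m}(f^m(c))$ contains ray arcs with $0<G\le r_0/d^{n-m}$; these are neither ``inside the filled-in Julia set'' (where $G=0$) nor on equipotentials above the depth-$0$ level, so your stated dichotomy does not literally cover them. The fix is painless and in fact simplifies the whole regime: since $n-m\le n_0$ and puzzles nest, $\overline{P_{n'}(c')}\subset P_{n_0}(c')\subset P_{n-m}(c')$, and this last open depth-$(n-m)$ piece is automatically disjoint from $\partial P_{n-m}(f^m(c))$. This one line handles all of $n-n_0\le m\le n$ without decomposing the boundary. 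For $m>n$ your forward-invariance observation on the depth-$0$ ray arcs, together with $P_{n_0}(c')\subset P_0(c')\subset\{G<r_0\}$, finishes the job the same way.
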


Given a subset~$\sC$ of~$\CJ$ and a function $\nu : \sC \to \N$ we put
$$ P(\nu) \= \bigcup_{c \in \sC} P_{\nu(c)}(c). $$
We will say that~$\nu$ is \emph{nice} (resp. \emph{strictly nice}) if for every $c \in \sC$ we have $\nu(c) \ge n_1$, and if for every integer~$m \ge 1$ we have
$$ f^m(\partial P(\nu)) \cap P(\nu) = \emptyset
\text{ (resp. $f^m(\partial P(\nu)) \cap \overline{P(\nu)} = \emptyset$)}. $$

The following lemma is part~$2$ of Lemma~$2.2$ of~\cite{KozvSt09}.
\begin{lemm}\label{l:recurrent}
For every recurrent critical point~$c$ in~$J(f)$ there is a strictly nice function defined on~$\{c \}$.
\end{lemm}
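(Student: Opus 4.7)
My plan is to invoke part~2 of Lemma~2.2 of~\cite{KozvSt09} as a black box, since the statement is essentially identical once one matches up the notion of ``strictly nice'' with that of KvS. It is worth sketching the underlying construction so that the mechanism is transparent. Given $n \ge n_1$, use the recurrence of~$c$ to select the smallest integer $k_n \ge 1$ with $f^{k_n}(c) \in P_n(c)$, and let $W_n$ denote the connected component of $f^{-k_n}(P_n(c))$ containing~$c$. Then $W_n$ is itself a puzzle piece, and by the nested-or-disjoint property of puzzle pieces combined with the minimality of~$k_n$ one has $W_n \subset P_n(c)$. I would then set $\nu(c)$ equal to the depth of~$W_n$, so that $P(\nu) = W_n$.

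To verify strict niceness I would split $m \ge 1$ into three ranges. For $1 \le m < k_n$, the image $f^m(W_n)$ is a puzzle piece through the point $f^m(c) \notin P_n(c)$, hence disjoint from $P_n(c) \supset W_n$; in particular $f^m(\partial W_n) \subset \partial f^m(W_n)$ does not meet $W_n$. For $m = k_n$, we have $f^{k_n}(\partial W_n) = \partial P_n(c)$, and the required disjointness from $\overline{W_n}$ reduces to the strict containment $\overline{W_n} \subset P_n(c)$. For $m > k_n$, write $m = k_n + m'$ and iterate the previous two cases using $f^{k_n}(W_n) = P_n(c)$.

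The main obstacle is the strictness: ruling out common points between $\overline{W_n}$ and the images $f^m(\partial W_n)$, especially at $m = k_n$ and in the sub-case $m < k_n$. Any such common point would be a preperiodic point of~$f$ with a rigid combinatorial structure, ultimately landing on the finitely many rays and equipotentials defining the puzzle. Since $c \in J(f)$ is never periodic (a periodic critical point would lie in a super-attracting cycle, which is contained in the Fatou set), these obstructions are avoided by choosing $n$ sufficiently large and invoking the deep result of~\cite{KozvSt09} that the diameters of puzzle pieces at depth~$n$ tend to~$0$ as $n \to \infty$ under our standing assumption of at most finite renormalizability. Together with Lemma~\ref{l:non-accumulating}, this shrinking separates $\overline{W_n}$ from the countable collection of potential offending points and yields the desired strict niceness.
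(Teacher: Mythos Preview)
Your black-box citation of~\cite{KozvSt09} is fine---the paper itself attributes the lemma to that reference---so as a proof the proposal is acceptable.  However, your sketch of the underlying mechanism has a genuine gap that is worth naming, because the paper's own proof addresses precisely this point by a different device.

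All three of your cases reduce to the single claim $\overline{W_n} \subset P_n(c)$.  Indeed, once this strict containment holds: for $1 \le m < k_n$ the puzzle piece $f^m(W_n)$ has depth $>n$ and is disjoint from $P_n(c)$, so $\partial f^m(W_n)$ cannot meet the open set $P_n(c) \supset \overline{W_n}$; the case $m = k_n$ is exactly the containment; and for $m > k_n$ one has $f^m(\partial W_n) = f^{m-k_n}(\partial P_n(c))$, which by the puzzle structure avoids $P_n(c) \supset \overline{W_n}$.  Your phrase ``iterate the previous two cases'' is misleading here---there is no recursion, only this single containment.

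The problem is that $\overline{W_n} \subset P_n(c)$ can genuinely fail.  The first-return pull-back $W_n$ may share an external-ray arc with $\partial P_n(c)$, and this can persist for arbitrarily large~$n$ (the offending boundary arcs move with~$n$).  Your appeal to shrinking diameters and to Lemma~\ref{l:non-accumulating} does not resolve this: the latter concerns distinct critical points with $c' \notin \overline{\cO_f(c)}$ and is irrelevant here, and the former does not separate $\overline{W_n}$ from $\partial P_n(c)$ since both move together.

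The paper's proof sidesteps this obstacle by a two-level trick.  First choose $\ell_0$ so large that $\overline{P_{\ell_0}(c)} \subset P_0(c)$; this single compact containment (which \emph{is} achievable by shrinking) guarantees $f^m(\partial P_{\ell_0}(c)) \cap \overline{P_{\ell_0}(c)} = \emptyset$ for every $m \ge \ell_0$.  Then iterate the first-return construction to get a tower $P_{\ell_0}(c) \supset P_{\ell_1}(c) \supset \cdots$ with $f^{\ell_k - \ell_0}(P_{\ell_k}(c)) = P_{\ell_0}(c)$.  If $f^m(\partial P_{\ell_k}(c))$ met $\overline{P_{\ell_k}(c)}$, pushing forward by $f^{\ell_k - \ell_0}$ would give $f^m(\partial P_{\ell_0}(c)) \cap \overline{P_{\ell_0}(c)} \neq \emptyset$, forcing $m < \ell_0$; but for the finitely many $m \in \{1, \ldots, \ell_0\}$ one takes $k$ large enough that $\overline{P_{\ell_k}(c)}$ and $\overline{f^m(P_{\ell_k}(c))}$ shrink to the distinct points $c$ and $f^m(c)$.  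The point is that the reduction transports the problem to the \emph{fixed} depth~$\ell_0$, where strict containment was arranged once and for all.
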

\begin{proof}
Let~$\ell_0 \ge 0$ be a sufficiently large integer so that~$\overline{P_{\ell_0}(c)} \subset P_0(c)$, and note that for every~$m \ge \ell_0$ the set~$f^m(\partial P_{\ell_0}(c))$ is disjoint from~$P_0(c)$ by the puzzle structure, and hence it is disjoint from~$\overline{P_{\ell_0}(c)}$.

Define inductively a strictly increasing sequence of integers~$( \ell_k )_{k \ge 1}$ as follows.
Suppose that for some~$k \ge 0$ the integer~$\ell_k$ is already defined.
Then we denote by~$m_k$ the least integer such that~$f^{m_k}(c) \in P_{\ell_k}(c)$, and put~$\ell_{k + 1} = \ell_k + m_k$.

Clearly the sequence~$(\ell_k)_{k \ge 0}$ is strictly increasing, so~$\diam(P_{\ell_k}(c)) \to 0$ as $k \to + \infty$, and therefore~$m_k \to + \infty$ as $k \to + \infty$.
Let~$k \ge 1$ be sufficiently large so that~$m_k \ge \ell_0$, and so that for every~$m \in \{ 1, \ldots, \ell_0 \}$ the sets~$f^m(P_{\ell_k}(c))$ and~$P_{\ell_k}(c)$ have disjoint closures.
We will show that for each~$m \ge 1$ the set~$f^m(\partial(P_{\ell_k}(c)))$ and~$\overline{P_{\ell_k}(c)}$ are disjoints, which shows that the function~$\nu : \{ c \} \to \N$ defined by~$\nu(c) = \ell_k$ is strictly nice.
Suppose by contradiction that for some~$m \ge 1$ the set~$f^m(\partial P_{\ell_k}(c))$ intersects~$\overline{P_{\ell_k}(c)}$.
This implies that~$f^{\ell_k - \ell_0 + m}(\partial P_{\ell_k}(c)) = f^m(\partial P_{\ell_0}(c))$ intersects~$f^{\ell_k - \ell_0}(\overline{P_{\ell_k}(c)}) = \overline{P_{\ell_0}(c)}$.
By our choice of~$k$ we have~$m \ge \ell_0$, so we get a contradiction with our choice of~$n$.
\end{proof}

For a strictly nice function~$\nu : \sC \to \N$, denote by~$D_\nu$ the set of those points~$z \in \C$ for which there is an integer~$m \ge 1$ such that~$f^m(z) \in P(\nu)$, and for each~$z \in D_\nu$ denote by~$m_\nu(z)$ the least such integer, and by~$c_\nu(z)$ the critical point~$c$ in~$\sC$ such that~$f^{m_\nu(z)}(z) \in P_{\nu(c)}(c)$.
Furthermore we denote by $\sE_\nu : D_\nu \to P(\nu)$ the map defined by~$\sE_\nu(z) \= f^{m_\nu(z)}(z)$.

For a subset~$\sC$ of~$\CJ$ we put
$$ \sN_{\sC} \= \{ c \in \sC \text{ such that } \overline{\cO_f(c)} \cap \sC = \emptyset \}. $$
For a strictly nice function~$\nu$ defined on~$\sC$ let~$\sR\nu : \sC \setminus \sN_{\sC} \to \N$ be the function defined by,
$$ \sR\nu(c) = \nu(c_\nu(c)) + m_\nu(c). $$
By definition, $P_{\sR\nu(c)}(c)$ is the pull-back of~$P_{\nu(c_\nu(c))}(c_\nu(c))$ by~$f^{m_\nu(c)}$ containing~$c$.

Now we shall prove the key technical lemma which provides the inductive step to construct nice couples starting from the existence of ``nice couples'' around a single recurrent critical point.
This procedure resembles the procedure to build `complex box mappings' in~\cite{KozvSt09} or $\tau$-nice sets in \cite[Proposition 5]{CaiLi09} (in the multimodal interval setting).
\begin{lemm}\label{l:nice functions}
For a subset~$\sC$ of~$\CJ$ the following properties hold.
\begin{itemize}
\item[1.]
Let $\nu : \sC \setminus \sN_{\sC} \to \N$ be a strictly nice function.
Then for each sufficiently large integer~$n$ the function $\tilde{\nu} : \sC \to \N$ defined by $\tilde{\nu}|_{\sC \setminus \sN_{\sC}} = \nu$ and $\nu^{-1}(n) = \sN_{\sC}$, is strictly nice.
\item[2.]
Let~$\nu : \sC \to \N$ be strictly nice.
Then for each sufficiently large integer~$n \ge n_1$ the function~$\nu' : \sC \to \N$ defined by~$(\nu')^{-1}(n) = \sN_{\sC}$, and $\nu'|_{\sC \setminus \sN_{\sC}} = \sR \nu$ is strictly nice, we have $\overline{P(\nu')} \subset P(\nu)$, and for each integer~$m \ge 1$ we have
$$ f^m(\partial P(\nu')) \cap P(\nu) = \emptyset. $$
\item[3.]
Let~$\tilde{c} \in \CJ$ not in~$\sC$, be such that $\overline{\cO_f(\tilde{c})} \cap \sC \neq \emptyset$.
If there is a strictly nice function defined on~$\sC$, then there is also one defined on~$\sC \cup \{ \tilde{c} \}$.
\end{itemize}
\end{lemm}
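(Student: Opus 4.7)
The plan is to prove all three parts by exploiting the puzzle structure (in particular the relation $f^m(\partial P_n(c)) = \partial P_{n-m}(f^m(c))$ for $m \le n$) together with Lemma~\ref{l:non-accumulating}. A common ingredient is that for a strictly nice function $\nu$, the set $\Omega_\nu := \bigcup_{c \in \mathrm{dom}(\nu),\, m \ge 1} f^m(\partial P_{\nu(c)}(c))$ has closure contained in a finite union of external rays and equipotentials (since iterates of puzzle boundaries remain in a finite collection of such curves), which avoids $\overline{P(\nu)}$ by strict niceness of $\nu$ and avoids critical points in $J(f)$ by the puzzle construction (rays are chosen to land at non-critical periodic points).

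For Part 1, I would take $n$ large enough so that each $\overline{P_n(c)}$ with $c \in \sN_{\sC}$ is disjoint from both $\overline{P(\nu)}$ and $\overline{\Omega_\nu}$. The four cases of strict niceness of $\tilde\nu$ then split according to whether the indices lie in $\sC \setminus \sN_{\sC}$ or $\sN_{\sC}$: the case of both in $\sC \setminus \sN_{\sC}$ is given; the mixed cases reduce to disjointness of $\overline{P_n(c)}$ from $\overline{\Omega_\nu}$ combined with the puzzle relation and the fact, from the definition of $\delta_0$, that $\cO_f(c) \cap P(\nu) = \emptyset$ for $c \in \sN_{\sC}$; and the case of both in $\sN_{\sC}$ follows from Lemma~\ref{l:non-accumulating}, since $\sN_{\sC}$ provides the required non-accumulation hypothesis.

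For Part 2, the heart of the argument is the claim $P_{\sR\nu(c)}(c) \subset P_{\nu(c)}(c)$ for each $c \in \sC \setminus \sN_{\sC}$, which I would prove by a no-crossing argument. Setting $W := P_{\sR\nu(c)}(c)$, we have $f^{m_\nu(c)}(\partial W) \subset \overline{P(\nu)}$ while strict niceness of $\nu$ forces $f^{m_\nu(c)}(\partial P_{\nu(c)}(c)) \cap \overline{P(\nu)} = \emptyset$; hence the nested puzzle pieces $W$ and $P_{\nu(c)}(c)$, both containing $c$, have disjoint boundaries, and the alternative $P_{\nu(c)}(c) \subsetneq W$ would force $f^{m_\nu(c)}(\partial P_{\nu(c)}(c)) \subset P(\nu)$, again contradicting strict niceness. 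Closed containment $\overline{P(\nu')} \subset P(\nu)$ follows from $\sR\nu(c) > \nu(c)$ (since $m_\nu(c) \ge 1$) together with the choice of $n_1$; strict niceness of $\nu'$ and the extra condition $f^m(\partial P(\nu')) \cap P(\nu) = \emptyset$ come from chasing iterates through $m_\nu(c)$ to reduce to iterates of $\partial P(\nu)$, which are controlled by strict niceness of $\nu$. Extension to all of $\sC$ is achieved by applying Part 1 to $\sR\nu$.

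For Part 3, using that $\overline{\cO_f(\tilde c)} \cap \sC \ne \emptyset$, let $m \ge 1$ be the least integer with $f^m(\tilde c) \in P(\nu)$, landing in some $P_{\nu(c_0)}(c_0)$, and let $W$ be the pull-back of $P_{\nu(c_0)}(c_0)$ by $f^m$ containing $\tilde c$, which equals $P_{\nu(c_0) + m}(\tilde c)$. I would then define $\tilde\nu$ on $\sC \cup \{\tilde c\}$ by $\tilde\nu|_{\sC} = \nu$ and $\tilde\nu(\tilde c) = \nu(c_0) + m + k$ for $k$ sufficiently large; strict niceness of $\tilde\nu$ follows since the finitely many initial iterates $f^j(\partial P_{\tilde\nu(\tilde c)}(\tilde c))$ for $1 \le j < m$ avoid $P(\nu)$ by minimality of $m$ and largeness of $k$, while for $j \ge m$ the iterates reduce to iterates of $\partial P_{\nu(c_0)}(c_0)$ controlled by strict niceness of $\nu$. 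The main obstacle I anticipate is the no-crossing argument in Part 2 (ensuring the pull-back puzzle piece is strictly contained in the original, even when depths are close) together with the verification in Part 1 that $\overline{\Omega_\nu}$ genuinely avoids the critical points in $\sN_{\sC}$, which relies on subtleties of the puzzle construction for at most finitely renormalizable polynomials without indifferent periodic points.
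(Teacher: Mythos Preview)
Your treatment of Parts~1 and~2 is essentially correct and close in spirit to the paper's. The no-crossing argument you give in Part~2 is precisely the mechanism behind the paper's one-line claim that $\overline{P_{\sR\nu}(\sC\setminus\sN_\sC)}\subset P(\nu)$. One minor slip: the inequality $\sR\nu(c)>\nu(c)$ you invoke need not hold, since $\nu(c_\nu(c))$ may well be smaller than $\nu(c)$; but this is harmless, because the no-crossing argument already gives $\overline{P_{\sR\nu(c)}(c)}\subset P_{\nu(c)}(c)$ directly.

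Part~3, however, contains a genuine gap. With $\tilde\nu|_\sC=\nu$ and $\tilde\nu(\tilde c)=\nu(c_0)+m+k$ you get
\[
f^m\bigl(\partial P_{\tilde\nu(\tilde c)}(\tilde c)\bigr)
=\partial P_{\nu(c_0)+k}\bigl(f^m(\tilde c)\bigr),
\]
and for $k$ large this set lies \emph{inside} $P_{\nu(c_0)}(c_0)\subset\overline{P(\tilde\nu)}$. So strict niceness fails at $j=m$. Your claim that ``for $j\ge m$ the iterates reduce to iterates of $\partial P_{\nu(c_0)}(c_0)$'' is false when $k>0$: the iterates for $m\le j<m+k$ are boundaries of depth $>\nu(c_0)$ puzzle pieces along the orbit of $f^m(\tilde c)$, and nothing prevents them from entering $\overline{P(\nu)}$; while taking $k=0$ makes the $j=m$ iterate equal to $\partial P_{\nu(c_0)}(c_0)$ itself, which certainly meets $\overline{P(\nu)}$.

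The missing idea is that you cannot keep $\tilde\nu|_\sC=\nu$: you must \emph{deepen} the nice function on $\sC$ to make room for the new piece at~$\tilde c$. The paper does this by first iterating Part~2 to produce a nested sequence $(\nu_k)_{k\ge0}$ of strictly nice functions on $\sC$, and then following the orbit of $\tilde c$ through \emph{two} successive first-entry steps (not one), so that the intermediate point $\hat v$ lies in $P(\nu_k)\setminus P(\nu_{k+1})$ for a suitable~$k$. One then takes $\tilde\nu|_\sC=\nu_{k+1}$ (not $\nu$) and $\tilde\nu(\tilde c)$ to be the depth of the two-step pullback at~$\tilde c$. The point is that the critical iterate now lands in a first-return domain of $P(\nu_k)$ that contains $\hat v\notin P(\nu_{k+1})$ and is therefore disjoint from $\overline{P(\nu_{k+1})}$, which is exactly what makes strict niceness go through.
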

\begin{proof}
\

\partn{1}
Put~$k = \max \{ \nu(c) \mid c \in \sC \setminus \sN_{\sC} \} \ge n_1$, let~$n > k$ be a sufficiently large integer so that for all~$c \in \sN_{\sC}$ we have~$\overline{P_n(c)} \subset P_k(c)$, and consider the function~$\tilde{\nu}$ defined as in the statement of the lemma for this choice of~$n$.
Then for each $c \in \sC \setminus \sN_{\sC}$ and~$m \ge 1$ we have
$$ f^m(\partial P_{\tilde{\nu}(c)}(c)) \cap \overline{P(\tilde{\nu})} = \emptyset. $$
On the other hand, since~$n > k \ge n_1$, by~Lemma~\ref{l:non-accumulating} the same property holds for each~$c \in \sN_{\sC}$.

\partn{2}
Let~$n \ge n_1$ be a sufficiently large integer such that for all~$c \in \sN_{\sC}$ we have~$\overline{P_n(c)} \subset P_{\nu(c)}(c)$, and let~$\nu'$ be the function defined in the statement of the lemma for this choice of~$n$.

Since~$\nu$ is strictly nice we have~$\overline{P_{\sR \nu}(\sC \setminus \sN_{\sC})} \subset P_{\nu}(\sC)$.
So, by our choice of~$n$ we have~$\overline{P_{\nu'}(\sC)} \subset P_{\nu}(\sC)$.
On the other hand, by the definition of~$\sR \nu$ it follows that for each~$c \in \sC \setminus \sN_{\sC}$ and each~$m \ge 1$ the set $f^m(\partial P_{\sR\nu(c)}(c))$ is disjoint from~$P_\nu(\sC)$, and hence from~$\overline{P_{\nu'}(\sC)}$.
Finally, by Lemma~\ref{l:non-accumulating}, for each~$c \in \sN_{\sC}$ and each~$m \ge 1$ the set~$f^m(\partial P_n(c))$ is disjoint from~$P_{\nu}(\sC)$, and hence from~$\overline{P_{\nu'}(\sC)}$.

\partn{3}
Let~$\nu_0 : \sC \to \N$ be a strictly nice function.
By part~2 there is a sequence of strictly nice functions~$(\nu_k)_{k \ge 1}$ defined on~$\sC$, such that for each~$k \ge 1$ we have~$\nu_k|_{\sC \setminus \sN_{\sC}} = \sR \nu_{k - 1}$, and~$\overline{P(\nu_k)} \subset P(\nu_{k - 1})$.

Put~$L = \# \CJ$, and let~$\hat{c}$ be the critical point defined as follows.\footnote{The proof of this part is simpler in the case when the forward orbit of~$\tilde{c}$ is disjoint from $\Crit(f)$. We advise to restrict to this case on a first reading, taking $L = 0$ and~$\hat{c} = \tilde{c}$.}
If~$\sE_{\nu_L}(\tilde{c}) \not \in \sC$, then put~$\hat{c} \= \tilde{c}$, $\hat{v} \= \sE_{\nu_L}(\tilde{c})$, and~$\ell = 0$.
Otherwise we let~$\ell \in \{ 1, \ldots, L \}$ be the largest integer such that for all~$j \in \{ 0, \ldots, \ell - 1 \}$ we have
$$ \sE_{\nu_{L - j}} \circ \cdots \circ \sE_{\nu_{L}}(\tilde{c}) \in \sC, $$
and then put
$$ \hat{c} \= \sE_{\nu_{L - (\ell - 1)}} \circ \cdots \circ \sE_{\nu_{L}}(\tilde{c}),
\text{ and } \hat{v} \= \sE_{\nu_{L - \ell}}(\hat{c}). $$

By definition we have~$\hat{v} \in P(\nu_{L - \ell})$, but $\hat{v} \not \in \sC$.
Let~$k \ge L - \ell$ be the largest integer such that $\hat{v} \in P(\nu_k)$, and note that~$m_{\nu_k}(\hat{c}) = m_{\nu_{L - \ell}}(\hat{c})$, and~$c_{\nu_k}(\hat{c}) = c_{\nu_{L - \ell}}(\hat{c})$.

Put
$$ \hat{n} \= \nu_k(c_{\nu_{k}}(\hat{v})) + m_{\nu_{k}}(\hat{v}) + m_{\nu_{k}}(\hat{c}), $$
so that $P_{\hat{n}}(\hat{c})$ is the pull-back of~$P(\nu_k)$ by~$f^{m_{\nu_{k}}(\hat{v}) + m_{\nu_{k}}(\hat{c})}$ containing~$\hat{c}$.

\partn{3.1}
We will show now that for every~$m \ge 1$ the set~$f^m(\partial P_{\hat{n}}(\hat{c}))$ is disjoint from~$\overline{P(\nu_{k + 1})}$.
We will use several times the fact that~$\nu_k$ is strictly nice.
By definition of~$\hat{n}$ the image of~$P_{\hat{n}}(\hat{c})$ by~$f^{m_{\nu_{k}}(\hat{v}) + m_{\nu_{k}}(\hat{c})}$ is a connected component of~$P(\nu_{k})$.
So for each~$m \ge m_{\nu_{k}}(\hat{v}) + m_{\nu_{k}}(\hat{c})$ the set~$f^m(\partial P_{\hat{n}}(\hat{c}))$ is disjoint from~$P(\nu_k)$, and hence from~$\overline{P(\nu_{k + 1})}$.
Since~$m_{\nu_{k}}(\hat{v})$ is the first entry time of~$\hat{v}$ to~$P(\nu_{k})$, it follows that the same property holds for each $m \in \{ m_{\nu_{k}}(\hat{c}) + 1, \ldots, m_{\nu_{k}}(\hat{v}) + m_{\nu_{k}}(\hat{c}) - 1 \}$.
On the other hand, by definition of~$\hat{n}$ the set~$f^{m_{\nu_{k}}(\hat{c})}(P_{\hat{n}}(\hat{c}))$ is the pull-back of~$P(\nu_k)$ by~$f^{m_{\nu_{k}}(\hat{v})}$ containing~$\hat{v}$, and by definition of~$k$ we have~$\hat{v} \in P(\nu_k) \setminus P(\nu_{k + 1})$.
As~$\nu_k$ is strictly nice, it follows that the sets~$f^{m_{\nu_{k}}(\hat{c})}(P_{\hat{n}}(\hat{c}))$ and $P(\nu_{k + 1})$ have disjoint closures.
Finally, since~$m_{\nu_k}(\hat{c})$ is the first entry time of~$\hat{c}$ to~$P(\nu_{k})$, it follows that for all~$m \in \{ 1, \ldots, m_{\nu_k}(\hat{c}) - 1 \}$ the sets~$f^m(P_{\hat{n}}(\hat{c}))$ and~$P(\nu_{k + 1})$ have disjoint closures.

\partn{3.2}
We will show now that for each integer~$m \ge 1$ the set~$f^m(\partial P(\nu_{k + 1}))$ is disjoint from~$\overline{P_{\hat{n}}(\hat{c})}$.
Suppose by contradiction that there is an integer~$m \ge 1$ and~$c \in \sC$ such that~$f^m(\partial P_{\nu_{k + 1}(c)}(c))$ intersects~$\overline{P_{\hat{n}}(\hat{c})}$.
Then the set~$f^{m_{\nu_k}(\hat{c}) + m}(\partial P_{\nu_{k + 1}(c)}(c))$ intersects the closure of~$f^{m_{\nu_k}(\hat{c})}(P_{\hat{n}}(\hat{c}))$.
This last set is a first return domain of~$P(\nu_k)$, and it is thus compactly contained in the open set~$P(\nu_k)$, because~$\nu_k$ is strictly nice.
We conclude that the set~$f^{m_{\nu_k}(\hat{c}) + m}(\partial P_{\nu_{k + 1}(c)}(c))$ intersects intersects the open set~$P(\nu_k)$.
However, this contradicts the fact that~$P_{\nu_{k + 1}(c)}(c)$ is a first return domain of~$P(\nu_k)$.

\partn{3.3}
If~$\hat{c} = \tilde{c}$, then the properties shown in parts~$3.1$ and~$3.2$ imply that the function~$\tilde{\nu} : \sC \cup \{ \tilde{c} \} \to \N$ defined by~$\tilde{\nu}(\tilde{c}) = \hat{n}$ and~$\tilde{\nu}|_{\sC} = \nu_{k + 1}$ is strictly nice.

If~$\hat{c} \neq \tilde{c}$, then we let~$\tilde{m}$ be the integer such that~$f^{\tilde{m}}(\tilde{c}) = \sE_{\nu_{L - (\ell - 1)}} \circ \cdots \circ \sE_{\nu_L}(\tilde{c})$, so that~$P_{\hat{n} + \tilde{m}}(\tilde{c})$ is the pull-back of~$P_{\hat{n}}(\hat{c})$ by~$\sE_{\nu_{L - (\ell - 1)}} \circ \cdots \circ \sE_{\nu_L}$.
Then we define~$\tilde{\nu} : \sC \cup \{ \tilde{c} \} \to \N$ by~$\tilde{\nu}(\tilde{c}) = \hat{n} + \tilde{m}$, and~$\tilde{\nu}|_{\sC} = \nu_{k + \ell}$.
The properties shown in parts~$3.1$ and~$3.2$ imply that~$\tilde{\nu}$ is strictly nice.
\end{proof}

\begin{proof}[Proof of Theorem~\ref{t:non-renormalizable}]
In view of part~$2$ of Lemma~\ref{l:nice functions} it is enough to show that there is a strictly nice function defined in all of~$\CJ$.

Let us say a critical point~$c \in \CJ$ is \emph{corresponded} if for each~$c' \in \CJ$ such that~$c' \in \overline{\cO_f(c)}$ we have~$c \in \overline{\cO_f(c')}$.
Denote by~$\sC_0$ the set of corresponded critical points in~$J(f)$.
Note that for each critical point~$c$ in~$J(f)$ that is not corresponded, the set~$\overline{\cO_f(c)}$ intersects~$\sC_0$.
So, using part~$3$ of Lemma~\ref{l:nice functions} inductively, it follows that to show the existence of a strictly nice function defined in all of~$\CJ$ it is enough to show the existence of a strictly nice function defined on~$\sC_0$.

Let~$\sim$ be the relation on~$\sC_0$ defined by~$c \sim c'$ if~$c = c'$ or~$c \in \overline{\cO_f(c')}$.
It follows from the definition of~$\sC_0$ that~$\sim$ is an equivalence relation.
Let~$\sC_1$ be a subset of~$\sC_0$ containing a unique element in each equivalence class of~$\sim$.
Thus for each~$c \in \sC_0$ the set~$\overline{\cO_f(c)}$ intersects~$\sC_1$.
Using part~$3$ of Lemma~\ref{l:nice functions} inductively it follows that to show that there is a strictly nice function defined on~$\sC_0$, it is enough to show that there is one defined on~$\sC_1$.

By definition of~$\sC_1$ for each~$c \in \sC_1$ the set~$\overline{\cO_f(c)}$ is disjoint from~$\sC_1 \setminus \{ c \}$.
Thus the set~$\sN_{\sC}$ defined as in the statement of Lemma~\ref{l:nice functions} for~$\sC = \sC_1$, is equal to the set of those~$c \in \sC_1$ such that~$c \not \in \overline{\cO_f(c)}$.
Equivalently, $\sN_{\sC}$ is the set of those non-recurrent critical points in~$\sC_1$.
Thus, by part~$1$ of this lemma we just need to show that there is a strictly nice function defined on~$\sC_2 \= \sC_1 \setminus \sN_{\sC_1}$.

For each~$c \in \sC_2$ let~$\nu_c$ be a strictly nice function defined on~$\{c \}$, given by Lemma~\ref{l:recurrent}.
Let~$\nu : \sC_2 \to \N$ be defined for each~$c \in \sC_2$ by~$\nu(c) = \nu_c(c)$.
As for each~$c \in \sC_2$ we have~$\overline{\cO_f(c)} \cap (\sC_2 \setminus \{ c \} ) = \emptyset$, by Lemma~\ref{l:non-accumulating} the function~$\nu$ is strictly nice.
This completes the proof of the theorem.
\end{proof}

\subsection{Infinitely renormalizable quadratic maps}
\label{ss:infinitely renormalizable}
The purpose of this section is to show that each infinitely renormalizable polynomial or polynomial-like map whose small critical Julia sets converge to~$0$ satisfy the hypotheses of Theorem~\ref{t:nice thermodynamics}.
This includes the case of infinitely renormalizable quadratic maps with \emph{a priori} bounds; see~\cite{KahLyu08,McM94} and references therein for results on~\emph{a priori} bounds.

The \emph{post-critical set} of a rational map~$f$ is by definition
$$ P(f) \= \overline{\bigcup_{n = 1}^{+ \infty} f^n(\Crit(f))}. $$
If~$f$ is an infinitely renormalizable quadratic-like map, then~$P(f)$ does not contain pre-periodic pionts~\cite[Theorem~8.1]{McM94}.
\begin{lemm}\label{l:post-critically free}
Let~$f$ be a rational map and let~$V$ be a nice set for~$f$ such that~$\partial V$ is disjoint from the post-critical set of~$f$.
Then for every neighborhood~$\tV$ of~$\overline{V}$ there is~$\hV \subset \tV$ such that~$(\hV, V)$ is a pleasant couple.
\end{lemm}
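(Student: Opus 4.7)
The plan is to take $\hV$ to be a slight thickening of $\overline V$ whose boundary $\partial \hV$ lies in a neighborhood of $\partial V$ disjoint from the post-critical set $P(f)$, and then to verify the two pleasant couple conditions using the forward invariance of $P(f)$.

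First, since $P(f)$ is closed and $\partial V$ is compact with $\partial V \cap P(f) = \emptyset$, I choose $\epsilon > 0$ so small that the $\epsilon$-neighborhood of $\partial V$ is contained in $\tV$, is disjoint from $P(f)$, and so that the $\epsilon$-neighborhoods of distinct connected components of $V$ are pairwise disjoint. I then take $\hV^c$ to be the open $\epsilon$-neighborhood of $\overline{V^c}$ for each $c \in \CJ$ and set $\hV = \bigcup_c \hV^c$. The key feature is that $\hV \setminus V$ is disjoint from $P(f)$, so by forward invariance of $P(f) \supset \bigcup_{n \ge 1} f^n(\Crit(f))$, no iterate $f^n(c')$ with $n \ge 1$ and $c' \in \Crit(f)$ can lie in $\hV \setminus V$. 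Equivalently, $f^{-n}(\partial \hV) \cap \Crit(f^n) = \emptyset$ for all $n \ge 1$, so the inverse branches of $f^n$ are local homeomorphisms near $\partial \hV$.

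With this in place, property~\puno{} will follow: for a pull-back $W \subset V$ of $V$, the corresponding $\hW$ is a narrow thickening of $\overline W$ contained in the $\epsilon$-neighborhood of the unique component $V^{c''}$ containing $W$, so $\hW \subset \hV^{c''} \subset \hV$. Property~\pdos{} will follow by contradiction: if $c' \in \hW \cap \Crit(f)$ with $W$ disjoint from $V$, then $m_W \ge 1$ and $f^{m_W}(c') \in P(f) \cap \hV \subset V$, which places $c'$ in some pull-back of $V$ of order $m_W$. The one-to-one correspondence between components of $f^{-m_W}(\hV)$ and $f^{-m_W}(V)$ (valid for $\epsilon$ small) then forces this pull-back to coincide with $W$, giving $c' \in W$, which is impossible since $W \cap V = \emptyset$ and $\CJ \subset V$ (the Fatou critical case is ruled out in the applications of this appendix, where for quadratic polynomials $\infty$ is the only Fatou critical point and sits outside any finite pull-back).

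The main obstacle is the uniform ``no-merging'' statement across all orders $n$: distinct components of $f^{-n}(V)$ can approach each other arbitrarily closely as $n$ grows, and one needs the $\epsilon$-thickening to $\hV$ not to bridge them. This is exactly where the hypothesis $\partial V \cap P(f) = \emptyset$ is used, since it makes $\partial \hV$ avoid all critical values of every $f^n$, allowing a Koebe-type distortion argument on inverse branches of $f^n$ near $\partial \hV$ to control the thickness of $\hW$ relative to $W$ and to preserve the combinatorial structure of pull-backs for $\epsilon$ sufficiently small.
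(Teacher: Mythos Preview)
Your proposal has a genuine gap, and you yourself identify it in the final paragraph: the ``no-merging'' problem. You need, uniformly over all orders $m_W$, that the $\epsilon$-thickening of $V$ pulls back to a thickening of $W$ that neither swallows a second component of $f^{-m_W}(V)$ nor escapes $\hV$ when $W\subset V$. With the spherical metric there is no reason for this: the derivative of $f^{m_W}$ on $\hW$ can be arbitrarily small (critical points may lie in $\hW$), so the spherical width of $\hW\setminus W$ is not controlled by $\epsilon$. Your appeal to a ``Koebe-type distortion argument'' does not work here, because Koebe requires univalence of $f^{m_W}$ on $\hW$, which is precisely what fails in the interesting case. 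The one-to-one correspondence between components of $f^{-m_W}(\hV)$ and $f^{-m_W}(V)$ that you invoke for property~\pdos{} is the very statement that needs proof.

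The paper resolves this with a single idea that replaces all of your estimates: work in the hyperbolic metric on $\CC\setminus P(f)$. By Schwarz's lemma, $\|f'\|_{\hyp}\ge 1$ on $\CC\setminus f^{-1}(P(f))$, so every inverse branch of every $f^{m}$ is non-expanding in this metric. Taking $\hV=V\cup B_{\hyp}(\partial V,\varepsilon)$, one gets $\dist_{\hyp}(\partial\hW,\partial W)\le\varepsilon$ for \emph{every} pull-back $W$, with no dependence on $m_W$; this immediately gives both \puno{} and \pdos. The hypothesis $\partial V\cap P(f)=\emptyset$ is exactly what makes $\partial V$ live in the domain of the hyperbolic metric. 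Your Euclidean approach can likely be salvaged, but only by reproducing this contraction property, which amounts to rediscovering the hyperbolic metric.
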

\begin{proof}
We will assume that~$P(f)$ contains at least three points; otherwise~$f$ is conjugated to a power map~\cite[Theorem~3.4]{McM94} and then the assertion is  vacuously true.
We will denote by~$\dist_{\hyp}$ the hyperbolic distance on~$\CC \setminus P(f)$ and by~$\| f' \|$ the derivative of~$f$ with respect to it.
Then by Schwarz lemma we have~$\| f' \| \ge 1$ on $\CC \setminus f^{-1}(P(f))$ (\emph{cf.}, ~\cite[Theorem~3.5]{McM94}).
Furthermore, for~$z \in \CC \setminus P(f)$ and $r > 0$ we denote by~$B_{\hyp}(z, r)$ the ball corresponding to the hyperbolic metric on~$\CC \setminus P(f)$.

Let~$\varepsilon > 0$ be sufficiently small such that $B_{\hyp}(\partial V, 2\varepsilon) \subset \tV$ and put
$$ \hV \= V \cup B_{\hyp}(\partial V, \varepsilon). $$
By construction~$\hV$ is a neighborhood of~$\overline{V}$ in~$\CC$ and the set~$\hV \setminus V$ is disjoint from~$P(f)$.
So for each pull-back~$W$ of~$V$ the set~$\hW \setminus W$ is disjoint from~$\Crit(f)$.
We thus have~$\hW \cap \Crit(f) = \emptyset$ when $W \cap V = \emptyset$.
On the other hand, since~$\| f' \| \ge 1$ on~$\CC \setminus f^{-1}(P(f))$, when~$W \subset V$ we have
$$
\dist_{\hyp} (\partial \hW, \overline{V} \setminus P(f))
\le
\dist_{\hyp}(\partial \hW, \partial W)
\le
\dist_{\hyp} (\partial \hV, \partial V)
\le
\varepsilon. $$
Hence $\hW \subset \hV$.
This shows that~$(\hV, V)$ is a pleasant couple for~$f$.
\end{proof}

In what follows we shall use some terminology of~\cite{McM94} and~\cite[\S2.4, Appendix~A]{AviLyu08}.
\begin{prop}\label{p:infinitely renormalizable}
Let~$f$ be an infinitely renormalizable quadratic-like map for which the diameters of small critical Julia sets converge to~$0$.
Then~$f$ is expanding away from critical points and has arbitrarily small pleasant couples having property~(*).
In particular the conclusions of Theorem~\ref{t:nice thermodynamics} hold for~$f$.
\end{prop}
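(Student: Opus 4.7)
The plan is to use the renormalization structure to build arbitrarily small nice sets around the unique critical point $c\in J(f)$, upgrade them to pleasant couples via Lemma~\ref{l:post-critically free}, verify property~(*) along the lines of \cite[Proposition~A.2]{PrzRiv07}, and deduce the expansion property from Ma\~n\'e's hyperbolicity theorem; together with Theorem~\ref{t:nice thermodynamics} these three ingredients yield the stated conclusion. Concretely, for each renormalization level $n\ge 1$ I would work with the associated quadratic-like restriction $f^{p_n}\colon U_n\to V_n$ and its small critical Julia set $K_n$, so that $\diam(K_n)\to 0$ by hypothesis. In the primitive case the continua $K_n,f(K_n),\ldots,f^{p_n-1}(K_n)$ are pairwise disjoint, so any sufficiently thin Jordan neighborhood $W_n\Subset U_n$ of $K_n$ is automatically a nice set for $f$. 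In the satellite case those continua touch at finitely many repelling periodic points, all distinct from $c$, and one produces a nice set by taking a Jordan domain $W_n$ containing $c$ whose boundary avoids those touching points; in both cases $W_n$ can be chosen arbitrarily close to $K_n$, yielding arbitrarily small nice sets for $f$.

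Next, McMullen's theorem (\cite[Theorem~8.1]{McM94}) asserts that the post-critical set $P(f)$ of an infinitely renormalizable quadratic-like map contains no pre-periodic points; combined with the flexibility in the choice of $\partial W_n$, this lets me arrange $\partial W_n\cap P(f)=\emptyset$. Lemma~\ref{l:post-critically free} then furnishes an arbitrarily small pleasant couple $(\widehat W_n,W_n)$ for $f$. Property~(*) is verified exactly as in \cite[Proposition~A.2]{PrzRiv07}: the argument there uses only that pull-backs of $\widehat W_n$ under the inducing branches are univalent (which is built into the definition of a pleasant couple) together with the Koebe distortion theorem, both of which apply verbatim here.

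Finally, fix a neighborhood $V'$ of $c$ in $J(f)$ and set $X=\{z\in J(f):f^n(z)\notin V'\text{ for all }n\ge 0\}$; this is compact, forward invariant, and disjoint from $\Crit(f)$. Moreover $f$ has no parabolic periodic points, since any parabolic cycle would attract the critical orbit in its immediate basin and therefore lie in $P(f)$, contradicting the McMullen theorem just quoted. Ma\~n\'e's hyperbolicity theorem then implies that $f$ is uniformly expanding on $X$, establishing the expansion property. The main obstacle is the satellite step of the nice-set construction: one needs $\partial W_n$ to separate $c$ from the touching points of the renormalization cycle while preserving the condition $f^k(\partial W_n)\cap W_n=\emptyset$ for all $k\ge 1$. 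I expect this to be manageable because the touching points are repelling pre-periodic and hence disjoint from $P(f)$, so small transverse perturbations of $\partial W_n$ accomplish both goals simultaneously.
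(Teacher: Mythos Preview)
Your construction of nice sets has a genuine gap, already in the primitive case. The claim that ``any sufficiently thin Jordan neighborhood $W_n\Subset U_n$ of $K_n$ is automatically a nice set for $f$'' is not correct: the nice condition demands $f^m(\partial W_n)\cap W_n=\emptyset$ for \emph{every} $m\ge 1$, and for $m=p_n$ the polynomial-like map $f^{p_n}\colon U_n\to V_n$ can send points of $\partial W_n\subset U_n\setminus K_n$ back into $W_n$ before they escape. Thinness alone does not prevent this; you would need a specific choice such as a sublevel set of the Green's function of the renormalization. The satellite case is worse, not (as you suggest) merely a matter of avoiding the touching points on $\partial W_n$: the obstruction is that forward images of $\partial W_n$ can re-enter $W_n$, and perturbing $\partial W_n$ away from those repelling points does nothing to control this. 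The paper sidesteps the whole issue by using puzzle pieces $P_{s,\ell}$ built from equipotentials and external rays landing at the $\alpha$-fixed points of successive renormalizations; these have the Markov property by construction, their boundaries meet $J(f)$ only in finitely many pre-periodic points (hence are disjoint from $P(f)$ by \cite[Theorem~8.1]{McM94}), and they shrink to the critical point because $\bigcap_\ell \overline{P_{s,\ell}}=J_{m(s)}$ and $\diam J_{m(s)}\to 0$.

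There is a second gap in your verification of property~(*). The result \cite[Proposition~A.2]{PrzRiv07} is proved for nice \emph{couples}, but the couples produced by Lemma~\ref{l:post-critically free} are only pleasant: the outer set $\widehat W_n=W_n\cup B_{\mathrm{hyp}}(\partial W_n,\varepsilon)$ need not be a nice set. More to the point, property~(*) is an area estimate that requires each component of $V$ to contain a disk of radius comparable to its diameter; Koebe distortion transports this property to the pull-backs $D_{\underline W}$, but it cannot create it, so if $W_n$ were long and thin the bound would fail. The paper handles this by noting that puzzle pieces are quasi-disks and hence satisfy the cone property of \cite[\S4.2]{MauUrb03} (with twisted angles), so the proof of \cite[Lemma~4.2.6]{MauUrb03} goes through. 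Your use of Ma\~n\'e for expansion away from critical points is fine in spirit; the paper instead uses the hyperbolic metric on $\CC\setminus P(f)$ and Schwarz's lemma, which applies directly to quadratic-like maps.
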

\begin{proof}
We will show that there are arbitrarily small puzzles containing the critical point whose boundaries are disjoint from the post-critical set.
Then Lemma~\ref{l:post-critically free} implies that there are arbitrarily small pleasant couples.
That each of these pleasant couples satisfies property~(*) is a repetition of the proof of~\cite[Lemma~4.2.6]{MauUrb03}, using the fact that each puzzle is a quasi-disk and thus that it has the ``cone property'' of~\cite[\S4.2]{MauUrb03} with ``twisted angles''.

Let~$\cS\cR(f)$ be the set of all integers~$n \ge 2$ such that~$f^n$ is simply renormalizable and let~$J_n$ be the corresponding critical small Julia set.
Then~$J_n$ is decreasing with~$n$.
For each~$k \ge 1$ we denote by~$m(k)$ the $k$-th element of~$\cS\cR(f)$.

We consider the usual puzzle construction with the~$\alpha$-fixed point of~$f$.
Then for each~$\ell \ge 1$ there is a puzzle of depth~$\ell$, that we denote by~$P_\ell$, whose closure contains~$J_{m(1)}$.
We have~$\bigcap_{\ell = 1}^{+ \infty} \overline{P_\ell} = J_{m(1)}$.
More generally, by induction it can be shown that if for a given~$s \ge 1$ we consider the puzzle construction with the $\alpha$-fixed points of the renormalizations of~$f^{m(1)}$, $f^{m(2)}$, \ldots, $f^{m(s)}$, then for each~$\ell \ge 1$ there is a puzzle of depth~$\ell$ that contains~$J_{m(s)}$.
We will denote it by~$P_{s, \ell}$.
Thus~$P_{s, \ell}$ is bounded by a finite number of arcs in an equipotential line and by the closure of some pre-images of external rays landing at the~$\alpha$-fixed points of the renormalizations of~$f^{m(1)}$, $f^{m(2)}$, \ldots, $f^{m(s)}$.
In particular the intersection of~$\partial P_{s, \ell}$ with the Julia set is a finite set of pre-periodic points and it is thus disjoint from~$P(f)$ by~\cite[Theorem~8.1]{McM94}.
Furthermore we have
$$ \bigcap_{\ell = 1}^{+ \infty} \overline{P_{s, \ell}} = J_{m(s)} $$
and hence
$$ \lim_{s \to + \infty} \lim_{\ell \to + \infty} \diam(P_{s, \ell}) = 0. $$
This completes the proof that~$f$ has arbitrarily small pleasant couples having property~(*).

To show that~$f$ is expanding away from critical points we just need to show that for each~$s \ge 1$ and~$\ell \ge 1$ the map~$f$ is uniformly expanding on~$K(P_{s, \ell}) \cap J_{m(s)}$.
As this set is compactly contained in~$\C \setminus P(f)$, it is enough to show that the derivative~$\| f' \|$ of~$f$ with respect of the hyperbolic metric on this set is strictly larger than~$1$ on~$\C \setminus f^{-1}(P(f))$.
Since~$f^{-1}(P(f))$ contains~$P(f)$ strictly, this is a consequence of Schwarz lemma.
\end{proof}
\subsection{Quadratic polynomials with real coefficients}
\label{ss:real quadratic}
In this section we show that each quadratic polynomial satisfies the conclusions of the Main Theorem.

If~$f$ is at most finitely renormalizable without indifferent periodic points, then by Theorem~\ref{t:non-renormalizable} the map~$f$ satisfies the hypotheses of the Main Theorem.
If~$f$ is infinitely renormalizable, then it has \emph{a priori} bounds by~\cite{McM94}, so the diameters of the small Julia set converge to~$0$ and then the assertion follows from Proposition~\ref{p:infinitely renormalizable}.
See also~Remark~\ref{r:primitive}.

It remains to consider the case when~$f$ has an indifferent periodic point.
Fix~$t_0 \in (\tneg, \tpos)$.
Since~$f$ has real coefficients it follows that~$f$ has a parabolic periodic point, and since~$f$ is quadratic it follows that~$f$ does not have critical points in the Julia set.
Therefore the function~$\ln|f'|$ is bounded and H{\"o}lder continuous on~$J(f)$, and since the measure theoretic entropy of~$f$ is upper semi\nobreakdash-continuous~\cite{FreLopMan83,Lju83}, there is an equilibrium state~$\rho$ of~$f$ for the potential~$-t_0 \ln |f'|$.
Since~$f$ has a parabolic periodic point it follows that~$\tpos$ is the first zero of~$P$, so we have~$P(t_0) > 0$ and therefore the Lyapunov exponent of~$\rho$ is strictly positive.
Since by~\cite[Theorem~A and Theorem~A.7]{PrzRivSmi04} there is a~$(t_0, P(t_0))$\nobreakdash-conformal measure of~$f$ (see also~\cite{Prz99}),~\cite[Theorem~8]{Dob0804} implies that~$\rho$ is in fact the unique equilibrium state of~$f$ for the potential~$- t_0 \ln |f'|$.
The analyticity of~$P$ at~$t = t_0$ is given by~\cite{MakSmi00} when~$t_0 < 0$ and when~$t_0 \ge 0$ the fact that~$P$ is analytic at $t = t_0$ can be shown in an analogous way as in~\cite{StrUrb03}, using and induced map defined with puzzles pieces.
\begin{rema}\label{r:primitive}
We will now explain why we have introduced pleasant couples to deal with infinitely renormalizable quadratic-like maps as in Proposition~\ref{p:infinitely renormalizable} and with quadratic polynomials with real coefficients in particular.
Following~\cite{McM94} we call a renormalization of a quadratic-like map \emph{primitive} if the corresponding small Julia sets are pairwise disjoint.
If the first renormalization of a quadratic-like map~$f$ is primitive, then the usual puzzle construction produces a puzzle piece~$P$ containing the small critical Julia set, in such a way that the first return puzzle~$P_0$  to~$P$ containing the critical point is compactly contained in~$P$.
These puzzle pieces form a nice couple~$(P, P_0)$ for~$f$.
Since the puzzle~$P$ can be made arbitrarily close to the small critical Julia set, a slightly more general argument shows that a map as in Proposition~\ref{p:infinitely renormalizable} having infinitely many primitive renormalizations admits arbitrarily small nice couples.
The Feigenbaum quadratic polynomial is an example of an infinitely renormalizable quadratic map having no primitive renormalization and it is possible to show that such it does not have arbitrarily small nice couples.
However, the Feigenbaum polynomial does have arbitrarily small pleasant couples by Proposition~\ref{p:infinitely renormalizable}.
\end{rema}
\section{Rigidity, multifractal analysis, and level-1 large deviations}\label{s:applications}
The purpose of this appendix is to prove that, apart from some well-known exceptional maps, the pressure function of each of the maps considered in this paper is strictly convex on~$(\tneg, \tpos)$.
We derive consequences for the dimension spectrum for Lyapunov exponents (\S\ref{ss:Lyapunov spectrum}) and for pointwise dimensions of the maximal entropy measure (\S\ref{ss:dimension spectrum}), as well as some level-1 large deviations results (\S\ref{ss:large deviations}).
See~\cite{Pes97,Mak98} for background in multifractal analysis, and~\cite{DemZei98} for background in large deviation theory.

In what follows by a \emph{power map} we mean a rational map~$P(z) \in \C(z)$ such that for some integer~$d$ we have~$P(z) = z^d$.
\begin{theoalph}\label{t:strict convexity}
Let~$f$ be a rational map satisfying the hypotheses of Theorem~\ref{t:nice thermodynamics}.
If~$f$ is not conjugated to a power, Chebyshev or Latt{\`e}s map, then for every~$t \in (\tneg, \tpos)$ we have $P''(t) > 0$.
In particular
\begin{equation*}
\infd \= \inf \{ - P'(t) \mid t \in (\tneg, \tpos) \}
<
\supd \= \sup \{ - P'(t) \mid t \in (\tneg, \tpos) \}.
\end{equation*}
\end{theoalph}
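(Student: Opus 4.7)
The plan is to argue by contrapositive: assuming $P''(t_0) = 0$ for some $t_0 \in (\tneg, \tpos)$, I will deduce that $\ln|f'|$ is cohomologous to a constant with respect to the equilibrium state $\trho$ for the potential $-t_0 \ln|f'|$, which by Zdunik's rigidity theorem forces $f$ to be conjugated to a power, Chebyshev, or Latt{\`e}s map. The ``in particular'' assertion then follows, since if $P''$ is strictly positive on the non\nobreakdash-empty open interval $(\tneg, \tpos)$, then the real analytic function $-P'$ is strictly monotone there and so assumes more than one value.

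The first and technically most delicate step is the identification of $P''(t_0)$ with an asymptotic variance through the induced map. Let $(\hV, V)$, $F \colon D \to V$, $\pressure$, $\rho$, and $\trho$ be as in Theorem~\ref{t:lifting}, where $\rho$ is the equilibrium state of $F$ for the potential $-t_0\ln|F'| - P(t_0)m$. The integrability of all orders afforded by part~1 of Lemma~\ref{l:tail estimate} justifies applying the Ruelle derivative formulas for symbolic pressure from~\cite[\S2.6]{MauUrb03} to~$\pressure$. Implicit differentiation of the identity $\pressure(t, P(t)) = 0$ valid near $t_0$ by~\S\ref{ss:vanishing}, combined with Abramov's formula (see~\S\ref{ss:equilibrium}), yields
$$ P'(t_0) = -\chi_{\trho}(f), \qquad P''(t_0) \int m\, d\rho \; = \; \sigma^2_\rho(\varphi), \qquad \varphi \= \ln|F'| - \chi_{\trho}(f) m, $$
where $\sigma^2_\rho(\varphi)$ denotes the asymptotic variance of the H{\"o}lder, mean\nobreakdash-zero potential~$\varphi$ against $\rho$. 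The hypothesis $P''(t_0) = 0$ therefore forces $\sigma^2_\rho(\varphi) = 0$.

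Next I would apply the Livsic theorem for topologically mixing countable Markov shifts, which is available in the present setting by Lemma~\ref{l:mixingness}: vanishing of the asymptotic variance of a mean\nobreakdash-zero H{\"o}lder potential is equivalent to its being a coboundary, so $\varphi = u \circ F - u$ on~$J(F)$ for some H{\"o}lder continuous function~$u$. Transferring this to~$f$ via the formula $\hrho = \sum_W \sum_{j=0}^{m_W-1} f^j_* \rho|_W$ of~\S\ref{ss:equilibrium}, one obtains a measurable function~$\tu$ on~$J(f)$ such that
$$ \ln|f'| \; = \; \chi_{\trho}(f) \, + \, \tu \circ f \, - \, \tu \qquad \trho\text{-almost everywhere.} $$
Equivalently, $\ln|f'|$ is cohomologous to a constant with respect to the ergodic measure~$\trho$.

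The final step is Zdunik's rigidity theorem, originally proved in the expanding case and extended to the present non\nobreakdash-uniformly hyperbolic setting using the conformal measure of Proposition~\ref{p:conformal measures} together with a Pesin\nobreakdash-theoretic analogue of the original argument: such cohomological degeneracy of $\ln|f'|$ can occur only when $f$ is conjugated to a power, Chebyshev, or Latt{\`e}s map, contradicting the standing hypothesis. The main obstacle is the first step, that is, the verification of the variance formula in the present setting where $\ln|f'|$ is unbounded near $\Crit(f) \cap J(f)$, which is precisely why the variance has to be computed for the induced map~$F$ (where $\ln|F'|$ lies in $L^\gamma(\rho)$ for every $\gamma > 0$ by part~1 of Lemma~\ref{l:tail estimate}) and then transferred back to~$f$ via Abramov's formula; once this is in place, the Livsic cohomology and Zdunik rigidity arguments proceed along classical lines.
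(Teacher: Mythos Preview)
Your proposal is correct and follows essentially the same route as the paper's proof: both argue by contrapositive, pass to the induced map~$F$, use the Ruelle derivative formulas from~\cite[\S2.6]{MauUrb03} (your implicit differentiation of $\pressure(t,P(t))=0$ is equivalent to the paper's tangent-line function $p_0(\tau)=\pressure(t_0+\tau,P(t_0)+\tau P'(t_0))$, with your $\varphi=-\psi$), deduce that the centred potential is a coboundary for~$F$ (the paper cites~\cite[Lemma~4.8.8]{MauUrb03} and only obtains a measurable~$u$, not H{\"o}lder), extend this along orbits to a cohomological equation for~$f$ that holds $\trho$-a.e., and conclude via Zdunik's rigidity. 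The only minor discrepancy is your claim that~$u$ is H{\"o}lder; the cited result gives only measurability, but this suffices for the rigidity argument.
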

It is well known that for a power, Chebyshev or Latt{\`e}s map, $\tpos = + \infty$ and the pressure function~$P$ is affine on~$(\tneg, + \infty)$; in particular in this case we have $\infd = \supd$.
For a general rational map~$f$ and for $t_0 \in (\tneg, 0)$, a result analogous to Theorem~\ref{t:strict convexity} was shown by~Makarov and Smirnov in~\cite[\S3.8]{MakSmi00}.
\begin{proof}
Suppose that for some~$t_0 \in (\tneg, \tpos)$ we have $P''(t_0) = 0$.
Let~$(\hV, V)$ be a pleasant couple as in~\S\ref{s:proof of nice thermodynamics}, so that the corresponding pressure function~$\pressure$ is finite on a neighborhood of~$(t, p) = (t_0, P(t_0))$, and such that for each~$t \in \R$ close to~$t_0$ we have $\pressure(t, P(t)) = 0$, see~\S\ref{ss:two variable pressure} for the definition of~$\pressure$.
Then the implicit function theorem implies that the function,
\begin{multline*}
p_0(\tau)
\=
\pressure(t_0 + \tau, P(t_0) + \tau P'(t_0))
\\ =
P(F, - t_0 \ln |F'| - P(t_0) m - \tau (\ln |F'| + P'(t_0) m)),
\end{multline*}
defined for~$\tau \in \R$ in a neighborhood of~$t = 0$, satisfies $p_0''(0) = 0$.

Let~$\rho$ be the equilibrium measure of~$F$ for the potential~$- t_0 \ln |F'| - P(t_0) m$ and put $\psi = - \ln |F'| - P'(t_0) m$.
Since for each~$t$ close to~$t_0$ we have~$\pressure(t, P(t)) = 0$, the implicit function theorem gives $p_0'(0) = 0$.
Thus, by part~1 of Lemma~\ref{l:tail estimate} and~\cite[Proposition~2.6.13]{MauUrb03} we have
$$ \int \psi d\rho = \int - \ln |F'| - P'(t_0) m d \rho = p_0'(0) = 0, $$
see also Remark~\ref{r:using symbolic}.
On the other hand, by part~1 of Lemma~\ref{l:tail estimate} and~\cite[Proposition~2.6.14]{MauUrb03}
$$ 0 = p_0''(0) = \sum_{k = 0}^{+ \infty} \left( \int \psi \circ F^k \cdot \psi d\rho - \left(\int \psi d\rho \right)^2 \right), $$
is the asymptotic variance of~$\psi$ with respect to~$\rho$, see also Remark~\ref{r:using symbolic}.
By part~1 of Lemma~\ref{l:tail estimate} and~\cite[Lemma~4.8.8]{MauUrb03} it follows that there is a measurable function $u : J(F) \to \R$ such that $\psi = u \circ F - u$, see also Remark~\ref{r:using symbolic}.
Put
$$\tJ \= \{ z \in \CC \setminus K(V) \mid f^{m(z)}(z) \in J(F) \} $$
and extend~$u$ to a function defined on~$\tJ$, that for each $z \in \tJ \setminus J(F)$ it is given by,
$$ u(z) = u(f^{m(z)}(z)) - \sum_{j = 0}^{m(z) - 1} (- \ln |f'(f^j(z))| - P'(t_0)). $$
An argument similar to the construction of the conformal measure given in the proof of Proposition~\ref{p:lifting conformal}, shows that we have $\ln |f'| = - P'(t_0) + u \circ f - u$ on~$\tJ$; see also~\cite[Proposition~B.2]{PrzRiv07}.
By construction this last set has full measure with respect to the equilibrium state of~$f$ for the potential $- t_0 \ln |f'|$, \emph{cf.} \S\ref{ss:equilibrium}.
Thus, an argument similar to the proof of \cite[\S\S5--8]{Zdu90} (see also~\cite[\S3.8]{MakSmi00} or~\cite[Theorem~3.1]{May02}) implies that~$f$ is a power, Chebyshev or Latt{\`e}s map.
\end{proof}

\subsection{Dimension spectrum for Lyapunov exponents}\label{ss:Lyapunov spectrum}
Let~$f$ be a rational map of degree at least two.
For $z \in \CC$ we define
$$ \chi(z) = \lim_{n \to + \infty} \frac{1}{n} \ln |(f^n)'(z)|,
$$
whenever the limit exists; it is called the \emph{Lyapunov exponent of~$f$ at~$z$}.
The \emph{dimension spectrum for Lyapunov exponents} is the function $L : (0, + \infty) \to \R$ defined by, 
$$ L(\alpha) \= \HD ( \{ z \in J(f) \mid \chi(z) = \alpha \} ). $$

Following~\cite{MakSmi96} we will say that~$f$ is \emph{exceptional} if there is a finite subset~$\Sigma$ of~$\CC$ such that
\begin{equation}\label{e:exceptional}
f^{-1}(\Sigma) \setminus \Crit(f) = \Sigma,
\end{equation}
see also~\cite[\S1.3]{MakSmi00}.
A rational map~$f$ is exceptional if and only if~$\tneg > - \infty$.
Furthermore, in this case there is a set~$\Sigma_f$ containing at most four points such that~\eqref{e:exceptional} is satisfied with~$\Sigma = \Sigma_f$, and such that each finite set~$\Sigma$ satisfying~\eqref{e:exceptional} is contained in~$\Sigma_f$.
Power, Chebyshev and Latt{\`e}s maps are all exceptional.
See~\cite{MakSmi96} for other examples of exceptional rational maps.

It has been recently shown in~\cite[Theorem~2]{GelPrzRam0809} that if~$f$ is not exceptional, or if~$f$ is exceptional and $\Sigma_f \cap J(f) = \emptyset$, then for each~$\alpha \in (0, + \infty)$ we have
$$ L(\alpha) = \frac{1}{\alpha} \inf \{ P(t) + \alpha t \mid t \in \R \}. $$
Equivalently, the functions~$\alpha \mapsto - \alpha L(\alpha)$ and~$s \mapsto P(-s)$ form a Legendre pair.
Note that a Chebyshev or a Latt{\`e}s map~$f$ is exceptional and $\Sigma_f$ intersects~$J(f)$.

The following is a direct consequence of Theorem~\ref{t:strict convexity}.
\begin{coro}
Let~$f$ be a rational map satisfying the hypotheses of Theorem~\ref{t:nice thermodynamics}.
Suppose furthermore that~$f$ is not conjugated to a power map, and that either~$f$ is not exceptional, or that~$f$ is exceptional and $\Sigma_f$ is disjoint from~$J(f)$.
Then the dimension spectrum for Lyapunov exponents of~$f$ is real analytic on~$(\infd, \supd)$. 
\end{coro}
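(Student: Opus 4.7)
My plan is to combine three ingredients: the Legendre-type identity
$$ \alpha L(\alpha) \;=\; \inf\{P(t)+\alpha t \mid t\in\R\} $$
for $\alpha>0$, recalled just above from \cite[Theorem~2]{GelPrzRam0809}; the real analyticity of $P$ on $(\tneg,\tpos)$ from Theorem~\ref{t:nice thermodynamics}; and the strict convexity $P''(t)>0$ on $(\tneg,\tpos)$ from Theorem~\ref{t:strict convexity}. The hypotheses of the latter are in force under the assumptions of the corollary, since $f$ is assumed not to be conjugated to a power map, and since Chebyshev and Latt\`es maps are exceptional with $\Sigma_f\cap J(f)\neq\emptyset$ and are therefore ruled out by the remaining hypothesis.

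Granted these, I would proceed as follows. The function $P$ is convex on all of $\R$ and affine with slopes $-\chisup(f)$ on $(-\infty,\tneg]$ and $-\chiinf(f)$ on $[\tpos,+\infty)$, while $P''>0$ on $(\tneg,\tpos)$. Consequently the real-analytic map
$$ \Phi:(\tneg,\tpos)\to(\infd,\supd), \qquad \Phi(t):=-P'(t), $$
is a strictly decreasing bijection with $\Phi'(t)=-P''(t)\neq 0$, and the analytic implicit function theorem shows that its inverse $\alpha\mapsto t_0(\alpha)$ is real analytic on $(\infd,\supd)$. For each $\alpha\in(\infd,\supd)$, the convex function $t\mapsto P(t)+\alpha t$ on $\R$ has a critical point at $t_0(\alpha)\in(\tneg,\tpos)$, which by convexity is a global minimum. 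Hence
$$ L(\alpha) \;=\; \frac{P(t_0(\alpha))+\alpha\,t_0(\alpha)}{\alpha}, $$
which is real analytic in $\alpha\in(\infd,\supd)$ (note $\alpha>0$, since $\infd\geq\chiinf(f)\geq 0$).

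The only step that requires any genuine thought is checking that the infimum in the Legendre formula is actually attained inside the open interval $(\tneg,\tpos)$, rather than on one of the two affine pieces of $P$; but this is immediate once one observes that $\Phi$ surjects onto $(\infd,\supd)$, so that a critical point of $P(t)+\alpha t$ lies in $(\tneg,\tpos)$, and convexity of $P$ on the whole line then promotes it to a global minimum. I therefore do not anticipate a genuine obstacle: the corollary is a formal consequence of Theorem~\ref{t:nice thermodynamics}, Theorem~\ref{t:strict convexity}, and the variational formula of Gelfert--Przytycki--Rams, together with one application of the analytic implicit function theorem.
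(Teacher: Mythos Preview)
Your proposal is correct and follows exactly the approach the paper has in mind: the paper states only that the corollary ``is a direct consequence of Theorem~\ref{t:strict convexity}'', and what you have written is precisely the standard Legendre-transform unpacking of that phrase, using the variational formula from~\cite{GelPrzRam0809}, the analyticity of~$P$ on~$(\tneg,\tpos)$, and the strict convexity $P''>0$ to invoke the analytic inverse function theorem on $t\mapsto -P'(t)$. Your verification that Chebyshev and Latt{\`e}s maps are excluded by the hypothesis on~$\Sigma_f$ matches the paper's remark just before the corollary.
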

\subsection{Dimension spectrum for pointwise dimension}
\label{ss:dimension spectrum}
Let~$\rho_0$ be the unique measure of maximal entropy of~$f$.
Then for $z \in J(f)$ we define
$$ \alpha(z) \= \lim_{r \to 0^+} \frac{\ln \rho_0(B(z, r))}{\ln r}, $$
whenever the limit exists; it is called the \emph{pointwise dimension of~$\rho_0$ at~$z$}.
The \emph{dimension spectrum for pointwise dimensions} is defined as the function
$$ D(\alpha) \= \HD ( \{ z \in J(f) \mid \alpha(z) = \alpha \}). $$

When~$f$ is a polynomial with connected Julia set we have~$P'(0) = - \ln \deg(f)$, so by~\cite[\S5]{MakSmi00} it follows that for~$\alpha \le 1$ we have,
$$ D(\alpha) = \inf \left\{ t + \alpha \frac{P(t)}{\ln \deg(f)} \mid t \le 0 \right\}. $$
Equivalently, the function~$\beta \mapsto - \beta D(\frac{1}{\beta})$ on $\beta \ge 1$ and the function $s \mapsto (\ln \deg(f))^{-1} P( - s)$ on $s \ge 0$ form a Legendre pair.
So the following is a direct consequence of Theorem~\ref{t:nice thermodynamics} and Theorem~\ref{t:strict convexity}.
\begin{coro}
Let~$f$ be a polynomial with connected Julia set satisfying the hypotheses of Theorem~\ref{t:nice thermodynamics}.
If~$f$ is not a power or Chebyshev map, then the dimension spectrum for pointwise dimensions of the maximal entropy measure of~$f$ is real analytic on~$\alpha < 1$.
\end{coro}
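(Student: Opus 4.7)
The plan is to deduce the result from the Legendre duality recorded immediately before the corollary, together with Theorem~\ref{t:nice thermodynamics} (real analyticity of $P$) and Theorem~\ref{t:strict convexity} (strict convexity of $P$). Concretely, for a polynomial with connected Julia set one has
\[
D(\alpha) = \inf\left\{ t + \alpha \frac{P(t)}{\ln \deg(f)} \;\Big|\; t \le 0 \right\},
\]
equivalently $\beta \mapsto -\beta D(1/\beta)$ and $s \mapsto P(-s)/\ln \deg(f)$ form a Legendre pair. The abstract fact I would invoke is that the Legendre transform of a real analytic, strictly convex function is real analytic on the interior of its domain; this is a standard application of the implicit function theorem.

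The first step is to record, using Proposition~\ref{p:asymptotes} and Theorem~\ref{t:nice thermodynamics}, that $\tneg < 0 < \tpos$ and that $P$ is real analytic on $(\tneg, \tpos)$. Since $f$ is a polynomial it cannot be a Latt\`es map, and by hypothesis it is neither a power nor a Chebyshev map; therefore Theorem~\ref{t:strict convexity} applies and yields $P''(t) > 0$ for every $t \in (\tneg, \tpos)$. In particular $-P'$ is a real analytic strictly increasing diffeomorphism of $(\tneg, 0)$ onto an open interval.

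The second step is to argue that for every $\alpha < 1$ in the natural range, the infimum defining $D(\alpha)$ is attained at an interior critical point of $(\tneg, 0)$ solving the equation $-P'(t) = \ln \deg(f)/\alpha$. On the linear portion $(-\infty, \tneg]$, where $P(t) = -t\chisup(f)$, the quantity $t + \alpha P(t)/\ln \deg(f)$ is affine in $t$ and its infimum over $t \le \tneg$ is elementary to compute; comparing this value with the value at the interior critical point, using strict convexity on $(\tneg, \tpos)$, isolates the range of $\alpha$ for which the infimum is attained in the open interval $(\tneg, 0)$. Granted this, the implicit function theorem applied to $-P'(t) = \ln \deg(f)/\alpha$ produces a real analytic dependence $\alpha \mapsto t(\alpha)$, and substitution into the Legendre formula yields that $D$ is real analytic in $\alpha$.

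The principal technical obstacle I foresee is the boundary analysis at the two endpoints of the relevant range of $\alpha$, namely a careful study of $P'(t)$ as $t \to \tneg^+$ and as $t \to 0^-$ (recalling that $P'(0) = -\ln \deg(f)$ for polynomials with connected Julia set), to confirm that the interior critical point exists and is unique for \emph{every} $\alpha$ in the declared interval $\alpha < 1$. Everything else reduces, once Theorems~\ref{t:nice thermodynamics} and~\ref{t:strict convexity} are in hand, to a routine invocation of the implicit function theorem for the strictly convex analytic function~$P$ restricted to $(\tneg, 0)$.
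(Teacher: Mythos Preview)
Your approach is correct and matches the paper's own: the paper states this corollary as ``a direct consequence of Theorem~\ref{t:nice thermodynamics} and Theorem~\ref{t:strict convexity}'' together with the Legendre pair relation from~\cite[\S5]{MakSmi00} recorded just before the statement, and you have unpacked exactly that deduction via the implicit function theorem applied to the strictly convex, real analytic function~$P$ on $(\tneg,0)$. Your observation that a polynomial cannot be Latt\`es, so that ``not power or Chebyshev'' suffices to invoke Theorem~\ref{t:strict convexity}, is the only point one might want to make explicit, and you have done so.
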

\begin{rema}
In the uniformly hyperbolic case one has
\begin{equation}\label{e:dimension spectrum}
D(\alpha) = L(\ln \deg(f)/\alpha).
\end{equation}
This also holds when the set of those~$z \in J(f)$ for which~$\chi(z)$ exists and satisfies~$\chi(z) \le 0$ has Hausdorff dimension equal to~$0$, like for rational maps satisfying the \TCEC~\cite[\S1.4]{PrzRiv07}.
In fact, it is easy to see that for~$z \in J(f)$ belonging to the ``conical Julia set'' and for which both~$\alpha(z)$ and~$\chi(z)$ exists, and~$\chi(z) > 0$, we have~$\alpha(z) = \ln \deg(f) / \chi(z)$.
Then~\eqref{e:dimension spectrum} follows from~\cite[Proposition~3]{GelPrzRam0809}, that the set of those~$z \in J(f)$ that are not in the conical Julia set and~$\chi(z) > 0$ has Hausdorff dimension equal to~$0$.
\end{rema}
\subsection{Large deviations}
\label{ss:large deviations}
The purpose of this section is to present a sample application of Theorem~\ref{t:strict convexity} to level\nobreakdash-1 large deviations, using the characterizations of the pressure function given in~\cite{PrzRivSmi04}.
See~\cite{ComRiv0812} and references therein for some level\nobreakdash-2 large deviation principles for rational maps.
\begin{coro}
Let~$f$ be a rational map satisfying the hypotheses of Theorem~\ref{t:nice thermodynamics}, and that is not conjugated to a power, Chebyshev, or Latt{\`e}s map.
Fix~$t_0 \in (\tneg, \tpos)$ and let~$\rho_{t_0}$ be the equilibrium state of~$f$ for the potential~$- t_0 \ln |f'|$.
Fix~$x_0 \in J(f)$ such that~\eqref{e:tree pressure} holds, and for each~$n \ge 1$ put
$$ \omega_n \= \sum_{x \in f^{-n}(x_0)} \frac{ |(f^n)'(x)|^{- t_0} }{\sum_{y \in f^{-n}(x_0)} |(f^n)'(y)|^{- t_0}} \delta_x. $$
Given~$\varepsilon \in (0, - P'(t_0) - \infd)$, let~$t(\varepsilon) \in (\tneg, t_0)$ be determined by~$P'(t(\varepsilon)) = P'(t_0) - \varepsilon$.
Then we have,
\begin{multline*}
\lim_{n \to + \infty} \frac{1}{n} \ln \omega_n \left\{ x \in J(f) \mid \frac{1}{n} \ln |(f^j)'(x)| > \int \ln |f'| d \rho_{t_0} + \varepsilon \right\}
\\ =
P(t(\varepsilon)) - P(t_0) - (t(\varepsilon) - t_0)P'(t(\varepsilon))
< 0.
\end{multline*}
Similarly, given~$\varepsilon \in (0, \supd + P'(t_0))$ let~$\tilde{t}(\varepsilon) \in (t_0, \tpos)$ be determined by $P'(\tilde{t}(\varepsilon)) = P'(t_0) + \varepsilon$.
Then we have,
\begin{multline*}
\lim_{n \to + \infty} \frac{1}{n} \ln \omega_n \left\{ x \in J(f) \mid \frac{1}{n} \ln |(f^j)'(x)| < \int \ln |f'| d \rho_{t_0} - \varepsilon \right\}
\\ =
P(\tilde{t}(\varepsilon)) - P(t_0) - (\tilde{t}(\varepsilon) - t_0) P'(\tilde{t}(\varepsilon)) < 0.
\end{multline*}
\end{coro}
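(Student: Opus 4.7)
The plan is to recognize the corollary as a level-1 large deviation principle for the random variable $S_n(x) := \tfrac{1}{n}\ln|(f^n)'(x)|$ under the sequence of measures $\omega_n$, to be obtained by a G{\"a}rtner--Ellis argument. First, since $\rho_{t_0}$ achieves the supremum in the variational principle at $t = t_0$, the envelope theorem gives $\int \ln |f'|\,d\rho_{t_0} = \chi_{\rho_{t_0}} = -P'(t_0)$, so the threshold in the first claim becomes $-P'(t_0) + \varepsilon = -P'(t(\varepsilon))$, and analogously $-P'(\tilde t(\varepsilon))$ for the second.

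The main computation is that of the scaled log-moment generating function of $S_n$ under $\omega_n$: for $s \in \R$ with $t_0 - s \in (\tneg,\tpos)$,
\begin{equation*}
\tfrac{1}{n}\ln \int e^{nsS_n}\, d\omega_n
\;=\;
\tfrac{1}{n}\ln \Lambda_n(x_0, t_0 - s) - \tfrac{1}{n}\ln \Lambda_n(x_0, t_0),
\end{equation*}
which by \eqref{e:tree pressure} converges to $\Lambda(s) := P(t_0 - s) - P(t_0)$. By Theorem~\ref{t:nice thermodynamics}, $\Lambda$ is real analytic in a neighborhood of $s=0$, so its Legendre transform $I(\alpha) := \sup_s(s\alpha - \Lambda(s))$ is well-defined. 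The upper bound follows from the exponential Chebyshev inequality: for $s > 0$ and $\alpha := -P'(t_0) + \varepsilon$,
\begin{equation*}
\omega_n\{S_n > \alpha\} \;\le\; e^{-ns\alpha}\int e^{nsS_n}\,d\omega_n,
\end{equation*}
and optimization over $s$, with optimal value $s^* = t_0 - t(\varepsilon) > 0$ determined by $\Lambda'(s^*) = -P'(t(\varepsilon)) = \alpha$, produces by a short calculation the claimed value $-I(\alpha) = P(t(\varepsilon)) - P(t_0) - (t(\varepsilon) - t_0)P'(t(\varepsilon))$. The matching lower bound follows from the G{\"a}rtner--Ellis theorem (e.g.\ \cite[Theorem~2.3.6]{DemZei98}) once $\Lambda$ is shown to exist as a genuine limit and be differentiable on a neighborhood of $[0, s^*]$. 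Strict negativity of the limit is then immediate from Theorem~\ref{t:strict convexity}: strict convexity of $P$ on $(\tneg,\tpos)$ yields $P(t_0) > P(t(\varepsilon)) + (t_0 - t(\varepsilon))P'(t(\varepsilon))$ whenever $t(\varepsilon) \neq t_0$. The second claim is proved by the symmetric choice $s < 0$, with $\tilde t(\varepsilon) > t_0$ in place of $t(\varepsilon)$.

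The main obstacle is that \eqref{e:tree pressure} supplies only a $\limsup$, whereas G{\"a}rtner--Ellis requires an actual limit, moreover one whose limit function $\Lambda$ is smooth on a neighborhood of the range of optimizers. Upgrading the $\limsup$ to a $\lim$ for the relevant $x_0$ uses the thermodynamic formalism developed in the paper: the $(t,P(t))$-conformal measures from Proposition~\ref{p:conformal measures}, combined with the characterizations of the pressure from~\cite{PrzRivSmi04} and the uniqueness and ergodicity of the equilibrium state $\rho_t$ from Theorem~\ref{t:lifting}, yield $\tfrac{1}{n}\ln \Lambda_n(x_0, t) \to P(t)$ for every $x_0$ outside a set of Hausdorff dimension zero and every $t$ in a fixed compact subset of $(\tneg, +\infty)$; the real analyticity in $t$ from Theorem~\ref{t:nice thermodynamics} then transfers to $\Lambda$. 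As an alternative, one can bypass G{\"a}rtner--Ellis for the lower bound by directly exhibiting enough preimages $x \in f^{-n}(x_0)$ with $S_n(x) \approx \alpha$: since $\rho_{t(\varepsilon)}$ has Lyapunov exponent exactly $\alpha = -P'(t(\varepsilon))$ and is absolutely continuous with respect to the $(t(\varepsilon),P(t(\varepsilon)))$-conformal measure of~$f$, the Birkhoff ergodic theorem applied to $\ln|f'|$ under $\rho_{t(\varepsilon)}$ produces the required preimages, and the weight-ratio identity $|(f^n)'(x)|^{-t_0}/|(f^n)'(x)|^{-t(\varepsilon)} = |(f^n)'(x)|^{-(t_0 - t(\varepsilon))} \asymp e^{-n\alpha(t_0 - t(\varepsilon))}$ on these preimages converts the estimate into the desired lower bound on $\omega_n\{S_n > \alpha\}$.
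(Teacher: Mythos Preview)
Your proposal is correct and follows essentially the same approach as the paper: both compute the scaled log-moment generating function of $\tfrac{1}{n}\ln|(f^n)'|$ under $\omega_n$ as $P(t_0-s)-P(t_0)$ and then invoke a one-dimensional large deviation theorem, the paper citing Plachky--Steinebach~\cite{PlaSte75} directly rather than G\"artner--Ellis. Your discussion of the $\limsup$-versus-$\lim$ issue in~\eqref{e:tree pressure} is more explicit than the paper's, which tacitly uses that the limit exists for the chosen~$x_0$.
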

For a rational map satisfying the TCE condition, or the weaker ``Hypothesis~H'' of~\cite{PrzRivSmi04}, a similar result can be obtained for periodic points.
See~\cite{Com09} and references therein for analogous statements in the case of uniformly hyperbolic rational maps, and~\cite{KelNow92} for similar results in the case of Collet-Eckmann unimodal maps and~$t_0$ near~$1$.
\begin{proof}
First observe that by the choice of~$x_0$, for each~$s \in \R$ we have
\begin{multline*}
\lim_{n \to + \infty} \frac{1}{n} \ln \int \exp(s \ln |(f^n)'|) d\omega_n
=
\lim_{n \to + \infty} \frac{1}{n} \ln \frac{\sum_{x \in f^{-n}(x_0)} |(f^n)'(x)|^{ - t_0 + s}}{\sum_{y \in f^{-n}(x_0)} |(f^n)'(y)|^{- t_0}}
\\ =
P(t_0 - s) - P(t_0).
\end{multline*}
We will apply the theorem in page~$343$ of~\cite{PlaSte75} to the space $J \= \prod_{n = 1}^{+ \infty} J(f)$ endowed with the probability measure $\mathsf{P} \= \bigotimes_{n = 1}^{+ \infty} \omega_n$.
Furthermore for each~$n \ge 1$ we take the random variable~$W_n : J \to \R$ as $W_n( \prod_{j = 1}^{+ \infty} x_j) \= \ln |(f^n)'(x_n)|. $
So for each~$s \in \R$ we have
$$ \int \exp(s W_n) d\mathsf{P} = \int \exp(s \ln |(f^n)'|) d\omega_n, $$
and by the computation above,
$$ \lim_{n \to + \infty} \frac{1}{n} \ln \int \exp(s W_n) d\mathsf{P}
=
P(t_0 - s) - P(t_0). $$
Using that $\int \ln |(f^n)'| d \rho_{t_0} = - P'(t_0)$ and that the function~$s \mapsto P(t_0 - s) - P(t_0)$ is real analytic and strictly convex on~$(t_0 - \tpos, t_0 - \tneg)$ by Theorem~\ref{t:strict convexity}, we obtain by the theorem in page~$343$ of~\cite{PlaSte75} that
\begin{multline*}
\lim_{n \to + \infty} \frac{1}{n} \ln \omega_n \left\{ \tfrac{1}{n} \ln |(f^n)'|
> \int \ln |(f^n)'| d \rho_{t_0} + \varepsilon \right\}
\\ =
P(t(\varepsilon)) - P(t_0) - (t(\varepsilon) - t_0)P'(t(\varepsilon)).
\end{multline*}

The second assertion is obtained analogously with~$W_n$ replaced by the function $ \prod_{j = 1}^{+ \infty} x_j \mapsto - \ln |(f^n)'(x_n)|$.
\end{proof}
\bibliographystyle{alpha}

\end{document}